\providecommand{\U}[1]{\protect \rule{.1in}{.1in}}
\newtheorem{theorem}{Theorem}[section]
\newtheorem{corollary}[theorem]{Corollary}
\newtheorem{definition}[theorem]{Definition}
\newtheorem{lemma}[theorem]{Lemma}
\newenvironment{proof}[1][Proof]{\noindent \textbf{#1.} }{\  $\Box$}
\numberwithin{equation}{section}
\begin{document}

\makeatletter
\newcommand{\rmnum}[1]{\romannumeral #1}
\newcommand{\Rmnum}[1]{\expandafter\@slowromancap\romannumeral #1@}
\makeatother
{\theoremstyle{nonumberplain}
}

\title{$G$-BSDEs with time-varying monotonicity condition}

\author{Renxing Li\textsuperscript{1,}\thanks{Corresponding author. E-mail address: 202011963@mail.sdu.edu.cn} 
	\and Xue Zhang\textsuperscript{2}  
}

\footnotetext[1]{Zhongtai Securities Institute for Financial Studies,
	Shandong University, Jinan, Shandong 250100, PR China. 202011963@mail.sdu.edu.cn.}
\footnotetext[2]{Department of Mathematics, National University of Defense Technology, Changsha, Hunan 410073, PR China.
	zhangxue\_1998@nudt.edu.cn.}

\maketitle

\textbf{Abstract}. In this paper, we study backward stochastic differential equations driven by $G$-Brownian motion where the generator has time-varying monotonicity with respect to $y$ and Lipsitz property with respect to $z$. Through the Yosida approximation, we have proved the existence and uniqueness of the solutions to these equations.

{\textbf{Key words}. } BSDE; $G$-Brownian motion; Monotonicity condition; Time-varying

\textbf{AMS subject classifications.} 60H10

\addcontentsline{toc}{section}{\hspace*{1.8em}Abstract}

\section{Introduction}

It is well-known that backward stochastic differential equations (BSDEs) play a crucial role in fields such as financial mathematics and stochastic control. In 1990, the existence and uniqueness of adapted solutions $(Y, Z)$ for BSDEs under Lipschitz conditions were proved by Pardoux and Peng\cite{pardoux1990adapted}, establishing the theoretical foundation for this field. Currently, extensive research and applications exist regarding BSDEs. Readers may refer to \cite{B2008BSDE,E1997backward,li2025weighted,liu2019multi,soner2012well,zhang2017backward,huy2000on} and their references.

In recent years, to address uncertainty issues in financial markets, Peng\cite{peng2004filtration,peng2007G,peng2008multi,peng2019nonlinear} introduced the $G$-expectation theory. Within the $G$-expectation framework, concepts such as $G$-Brownian motion and $G$-martingale were developed. Since these processes are no longer constructed based on a given probability space, they exhibit novel and interesting forms and properties, representing non-trivial generalizations of classical cases. Similarly, the new form of BSDEs driven by $G$-Brownian motion($G$-BSDEs) is given as follows:
\begin{equation}\label{G-BSDE}
	Y_{t}=\xi+\int_{t}^{T}f(s,Y_{s},Z_{s})ds+\int_{t}^{T}g(s,Y_{s},Z_{s})d\langle B\rangle _{s}-\int_{t}^{T}Z_{s}dB_{s}-(K_{T}-K_{t}).
\end{equation}
Here, $K$ denotes a continuous non-increasing $G$-martingale. Unlike the classical case, the solution to $G$-BSDEs is a triple $(Y, Z, K)$. 

Currently, numerous studies have been conducted on the existence and uniqueness of solutions for $G$-BSDEs. Hu\cite{hu2014backward,hu2024BSDE} investigated the existence and uniqueness of solutions to Equation (\ref{G-BSDE}) when the generator is uniformly continuous with respect to $(y, z)$ under both non-degenerate and degenerate cases. Bai and Lin\cite{bai2014on} studied the existence and uniqueness of solutions to Equation (\ref{G-BSDE}) under integral-Lipschitz conditions for generators. Research on generators satisfying time-varying Lipschitz conditions can be found in \cite{hu2020BSDEtimevary}. Quadratic $G$-BSDEs with convex generators and unbounded terminal conditions are investigated in \cite{huy2022quadratic}. Furthermore, some works have been made to relax generator assumptions to monotonicity conditions with respect to $y$, as detailed in \cite{song2019backward,wang2024reflectedBSDE} and their references.

In this paper, we study the $G$-BSDEs (\ref{G-BSDE}) where the generator has time-varying monotonicity with respect to $y$ and Lipsitz property with respect to $z$. In addition to the time-varying monotonicity with respect to $y$, we also need that generators $f,g$ are uniform continuity in $y$ and
\begin{equation}\label{grow}
	|\Phi(t,y,0)|\leq \phi(t)+u_{t}|y|, \ \Phi=f,g.
\end{equation}
Here $\phi(t) $ and $u_{t}$ are defined in (H3) of Section \ref{s3}. It is worth noting that since (\ref{grow}) is not a linear growth condition, the approximation method in \cite{song2019backward,wang2024reflectedBSDE} is not applicable here. 

Inspired by \cite{daprato1992stochastic,daprato1996ergodicity,huy2000on,pourrahimi2025stochastic}, we deal with continuous monotonic functions through the Yosida approximation. By Lemma \ref{falpha}, we know that the Yosida approximation not only transforms the time-varying monotonicity of generators with respect to $y$ into the time-varying Lipschitz property, but also preserves the Lipschitz property with respect to $z$.
By Lemma \ref{falphamg1}, we obtain that the approximate functions are in the $G$-expectation space. However, (iv) in Lemma \ref{falpha} represents pointwise convergence, which does not imply uniform convergence in the $G$-expectation space. In Lemma \ref{f-falpha}, we deduce that the approximate functions can converge uniformly to generators under the norm $\|\cdot \|_{M_{G}^{2}}$. Finally, by proving the convergence of the approximate Equation (\ref{GBSDEalpha}), we obtain the existence and uniqueness of the solution to Equation (\ref{GBSDEalpha}). Moreover, we also obtain the existence and uniqueness of the solution to Equation (\ref{G-BSDE}).

This paper is organized as follows. In Section \ref{s2}, we introduce some fundamental notations and results about $ G $-expectation theory. Section \ref{s3} presents the definition and properties of the Yosida approximation. The existence and uniqueness of the solution are provided in Section \ref{s4}.

\section{Preliminaries}\label{s2}

In this section, we recall the some basic notions and some necessary results of the $G$-expectation framework. The readers may refer to \cite{peng2004filtration,hu2016quasi,peng2007G,peng2008multi,peng2019nonlinear,denis2011function} for more details.

Let $\Omega =C_{0}(\mathbb{R}^{+})$ denote the space of all continuous functions $\omega: \mathbb{R}^{+}\rightarrow \mathbb{R}$ with $\omega _{0}=0$, equipped with the distance 
\begin{equation*}
	\rho (\omega^{1},\omega ^{2}):=\sum_{i=1}^{\infty }2^{-i}[(\max_{t\in
	\lbrack0,i]}|\omega_{t}^{1}-\omega_{t}^{2}|)\wedge1],\quad\omega^{1},\omega^{2}\in \Omega .
\end{equation*}%
Denote the canonical process by $B_{t}(\omega )=\omega _{t},t\in \mathbb{R}^{+}$. For each given $t\geq 0$, we define 
\begin{equation*}
	Lip(\Omega _{t}):=\{\varphi (B_{t_{1}\wedge t},\cdots ,B_{t_{n}\wedge t}):n\in \mathbb{N},t_{1},\cdots ,t_{n}\in \lbrack 0,\infty ),\varphi\in C_{l.Lip}(\mathbb{R}^{ n})\},\ Lip(\Omega ):=\cup _{t=1}^{\infty}Lip(\Omega _{t}),
\end{equation*}%
where $C_{l,Lip}(\mathbb{R}^{ n})$ is the space of all local Lipschitz functions on $\mathbb{R}^{ n}$.

Let function $G$ be defined as follows: for $ x\in\mathbb{R} $, $G(x)=\frac{1}{2}(\bar{\sigma}^{2}x^{+}-\underline{\sigma}^{2}x^{-}) $, where $ 0<\underline{\sigma}\leq\bar{\sigma}<\infty  $.
For each given function $ G$, Peng constructed the space $(\Omega ,Lip(\Omega ), \hat{\mathbb{E}}, (\hat{\mathbb{E}}_{t})_{t\geq0})$(see \cite{peng2019nonlinear} for definition). Then the canonical process $ B $ is a one-dimensional $ G $-Brownian motion under $ \hat{\mathbb{E}} $.

For each $p\geq 1$, $L_{G}^{p}(\Omega )$ is defined as the completion of $Lip(\Omega )$ under the norm $\Vert X\Vert_{L_{G}^{p}}=(\hat{\mathbb{E}}[|X|^{p}])^{\frac{1}{p}}$. For each $t>0$, $L_{G}^{p}(\Omega _{t})$ can be similarly defined. $(\Omega ,L_{G}^{1}(\Omega), \hat{\mathbb{E}})$ is called $ G $-expectation space.

\begin{theorem}[\cite{hu2009representation,denis2011function}]
	\label{thm2.1} Let $(\Omega ,L_{G}^{1}(\Omega),\hat{\mathbb{E}}) $ be a $G $-expectation space. Then there exists a weakly compact set of probability measures $\mathcal{P} $ on $(\Omega,\mathcal{F}) $ such that 
	\begin{align}
		\hat{\mathbb{E}}[\xi]=\sup_{P\in\mathcal{P}}E_{P}[\xi], \ \text{ for each }\xi\in L_{G}^{1}(\Omega).
	\end{align}
\end{theorem}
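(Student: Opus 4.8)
The plan is to realize the sublinear expectation $\hat{\mathbb{E}}$ as an upper envelope of linear expectations via a Hahn--Banach argument, to identify each such linear expectation with a countably additive probability measure through a Riesz/Daniell--Stone representation, and finally to upgrade the resulting family to a weakly compact one by means of tightness and Prokhorov's theorem.

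First I would record the algebraic structure: on $L_{G}^{1}(\Omega)$ the functional $\hat{\mathbb{E}}$ is monotone, preserves constants, is sub-additive and positively homogeneous, i.e. it is a sublinear expectation. These are exactly the hypotheses of Peng's abstract representation. Applying the Hahn--Banach theorem to the sublinear functional $\hat{\mathbb{E}}$, I obtain a family $\Theta$ of linear functionals $E_{\theta}$ on $L_{G}^{1}(\Omega)$, each dominated by $\hat{\mathbb{E}}$, such that $\hat{\mathbb{E}}[\xi]=\sup_{\theta}E_{\theta}[\xi]$ with the supremum attained for each fixed $\xi$. Domination by $\hat{\mathbb{E}}$ forces each $E_{\theta}$ to be monotone and to preserve constants, so every $E_{\theta}$ is a (a priori only finitely additive) linear expectation.

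The crux is to promote each $E_{\theta}$ to a genuine Borel probability measure $P_{\theta}$ on $(\Omega,\mathcal{F})$. Here I would restrict to $C_{b}(\Omega)$ and prove the regularity of $\hat{\mathbb{E}}$: if $\xi_{n}\in C_{b}(\Omega)$ and $\xi_{n}\downarrow 0$ pointwise, then $\hat{\mathbb{E}}[\xi_{n}]\to 0$. This continuity from above passes to every dominated $E_{\theta}$, and the Riesz representation theorem then yields a unique Radon probability measure $P_{\theta}$ with $E_{\theta}[\xi]=\int_{\Omega}\xi\,dP_{\theta}$ on $C_{b}(\Omega)$; density of $Lip(\Omega)$ in $L_{G}^{1}(\Omega)$ extends the identity $\hat{\mathbb{E}}[\xi]=\sup_{P}E_{P}[\xi]$ to all $\xi\in L_{G}^{1}(\Omega)$. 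It is precisely this regularity step that uses the concrete construction of the $G$-expectation (through the nonlinear heat flow and the associated $G$-normal distributions) rather than pure convex analysis, and I expect it to be the main obstacle, since Hahn--Banach alone only delivers finitely additive functionals.

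Finally I would establish weak compactness of $\mathcal{P}:=\overline{\{P_{\theta}:\theta\in\Theta\}}$. The uniform moment estimates available in the $G$-framework, such as $\sup_{P\in\mathcal{P}}E_{P}[\sup_{t\le T}|B_{t}|^{p}]=\hat{\mathbb{E}}[\sup_{t\le T}|B_{t}|^{p}]<\infty$, combined with a Kolmogorov--Chentsov tightness criterion on the path space $C_{0}(\mathbb{R}^{+})$, give tightness of the family; Prokhorov's theorem then makes it relatively weakly compact. Because the representation is stable under weak limits for bounded continuous test functions, the weak closure $\mathcal{P}$ is weakly compact and still satisfies $\hat{\mathbb{E}}[\xi]=\sup_{P\in\mathcal{P}}E_{P}[\xi]$, which completes the argument.
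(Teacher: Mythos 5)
The paper does not prove this statement at all: Theorem \ref{thm2.1} is imported verbatim from \cite{hu2009representation,denis2011function}, so there is no internal proof to compare against; your attempt has to be measured against the arguments in those references, whose overall architecture (Peng's Hahn--Banach representation of a sublinear expectation, upgrade of the finitely additive functionals to $\sigma$-additive measures, then tightness and Prokhorov for weak compactness) your outline does reproduce.

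However, there is a genuine gap at exactly the point you flag as ``the main obstacle'' and then leave unproved: the regularity step. As stated --- $\xi_n\in C_b(\Omega)$, $\xi_n\downarrow 0$ pointwise implies $\hat{\mathbb{E}}[\xi_n]\to 0$ --- this is not something you can hope to verify directly on the infinite-dimensional path space $\Omega=C_0(\mathbb{R}^+)$: Dini's theorem only gives uniform convergence on compacts, and a decreasing sequence in $C_b(\Omega)$ can converge to $0$ pointwise while staying equal to $1$ on a non-tight family of sets, so the implication is equivalent to a tightness property that must itself be extracted from the concrete construction of $\hat{\mathbb{E}}$. Moreover $\hat{\mathbb{E}}$ is a priori defined only on $Lip(\Omega)$ (unbounded cylinder functions) and its completion, not on all of $C_b(\Omega)$; the inclusion $C_b(\Omega)\subset L_G^1(\Omega)$ is itself a consequence of the capacity theory you are trying to establish. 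The cited proofs avoid this circularity either by working first with finite-dimensional marginals --- where the $G$-normal distribution gives tightness on $\mathbb{R}^n$, Daniell--Stone applies, and one then invokes Kolmogorov's extension and continuity criteria to pass to path space --- or by exhibiting $\mathcal{P}$ concretely as the weak closure of the laws of $\int_0^\cdot\theta_s\,dW_s$ under a reference Wiener measure, with $\theta$ adapted and valued in $[\underline{\sigma},\bar{\sigma}]$, and identifying the resulting upper expectation with $\hat{\mathbb{E}}$ through the $G$-heat equation. Without one of these inputs your argument produces only finitely additive representing functionals, so the countable additivity --- and hence the entire theorem --- remains unestablished.
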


For this $\mathcal{P}$, we can define the capacity 
\begin{equation*}
	\mathrm{c}(A):=\sup_{P\in \mathcal{P}}P(A),\ \text{ for each }A\in \mathcal{F},
\end{equation*}
where $\mathcal{F}:=\bigvee_{t\geq 0}\mathcal{F}_{t}$ and $\mathcal{F}_{t}:=\sigma (B_{s}:s\leq t)$.

\begin{definition}
	A set $A\in \mathcal{F}$ is polar if $\mathrm{c}(A)=0$. Moreover, a property holds \textquotedblleft quasi-surely'' (q.s.) if it holds outside a polar set.
\end{definition}

For each $T>0$ and each $p\geq 1$, we define the following spaces:

\begin{itemize}
	\item $ M_{G}^{p,0}(0,T):=\left\{ \eta _{t}=\sum_{j=0}^{n-1}\xi
	_{j}I_{[t_{j},t_{j+1})}(t):n\in \mathbb{N},0=t_{0}<t_{1}<\cdots <t_{n}=T,\ \xi _{j}\in L_{G}^{p}(\Omega _{t_{j}})\right\} $;
	\item $ S_{G}^{0}(0,T):=\left\{ \psi(t,B_{t_{1}\wedge t},\cdots ,B_{t_{n}\wedge t}):t_{1},\cdots ,t_{n}\in[0,T],\ \psi \in C_{c,Lip}(\mathbb{R}^{n+1})\right\} $;
	\item $ M_{G}^{p}(0,T)$: the completion of $M_{G}^{p,0}(0,T)$ under the norm $\Vert\eta\Vert_{M_{G}^{p}}:=(\hat{\mathbb{E}}[\int_{0}^{T}|\eta
	_{t}|^{p}dt])^{\frac{1}{p}}$ ;
	\item $H_{G}^{p}(0,T)$: the completion of $M_{G}^{p,0}(0,T)$ under the norm $\Vert\eta\Vert_{H_{G}^{p}}:=(\hat{\mathbb{E}}[(\int_{0}^{T}|\eta
	_{t}|^{2}dt)^{\frac{p}{2}}])^{\frac{1}{p}}$;
	\item $S_{G}^{p}(0,T)$: the completion of $S_{G}^{0}(0,T)$ under the norm $\Vert\eta\Vert_{S_{G}^{p}}:=(\hat{\mathbb{E}}[\sup_{t\in \lbrack 0,T]}|\eta_{t}|^{p} ])^{\frac{1}{p}} $;
	\item $\mathfrak{S}_{G}^{p}(0,T):= \{ (Y,Z,K):  Y\in S_{G}^{p}(0,T),\ Z\in H_{G}^{p}(0,T),\ K \text{ is a non-increasing  $G$-martingale},\ K_{0} = 0,\ K_{T}\in L_{G}^{p}(\Omega_{T}) \}  $.
\end{itemize}

According to \cite{peng2019nonlinear}, the integrals $\int_{0}^{t}\eta_{s}\mathrm{d}B_{s}$ and $\int_{0}^{t}\mu_{s} \mathrm{d}\langle B\rangle _{s}$ are well-defined for $\eta \in H_{G}^{2}(0,T)$ and $\mu \in M_{G}^{1}(0,T)$, where $\langle B\rangle $ denotes the quadratic variation process of $B$. 

\begin{theorem}[\cite{hu2014backward}]
	Let $ p\geq1 $ and $ T>0 $. For all $ \eta\in H_{G}^{p}(0,T) $, we have
	\begin{align}\label{bdg} \underline{\sigma}^{p}c_{p}\hat{\mathbb{E}}\left[\left(\int_{0}^{T}|\eta_{s}|^{2}ds\right)^{\frac{p}{2}}\right]\leq\hat{\mathbb{E}}\left[ \sup_{t\in[0,T]}\left|\int_{0}^{t}\eta_{s}dB_{s} \right|^{p} \right]\leq\bar{\sigma}^{p}C_{p}\hat{\mathbb{E}}\left[\left(\int_{0}^{T}|\eta_{s}|^{2}ds \right)^{\frac{p}{2}}  \right], 
	 \end{align}
	where $ 0<c_{p}<C_{p}<\infty $ are constants.
\end{theorem}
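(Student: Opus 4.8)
The plan is to reduce this $G$-framework estimate to the classical Burkholder--Davis--Gundy inequality under each single $P\in\mathcal{P}$, and then recover the two $G$-expectation bounds by taking suprema over $\mathcal{P}$, exploiting the representation $\hat{\mathbb{E}}[\cdot]=\sup_{P\in\mathcal{P}}E_P[\cdot]$ from Theorem \ref{thm2.1}. First I would prove the inequality for elementary integrands $\eta\in M_G^{p,0}(0,T)$ and afterwards pass to general $\eta\in H_G^p(0,T)$ by density, since both sides of (\ref{bdg}) are, up to the fixed constants, powers of norms that are preserved under completion.

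The core observation is the consistency between $G$-stochastic integration and classical It\^o integration (see \cite{denis2011function,peng2019nonlinear}): for each fixed $P\in\mathcal{P}$ the process $M_t:=\int_0^t\eta_s\,dB_s$ agrees $P$-a.s. with the ordinary It\^o integral, hence $M$ is a continuous $P$-martingale with quadratic variation $\langle M\rangle_t=\int_0^t\eta_s^2\,d\langle B\rangle_s$. Applying the classical BDG inequality to $M$ under $P$ gives, for constants $0<c_p<C_p<\infty$ that do not depend on $P$,
\begin{align*}
c_p\,E_P\left[\Big(\int_0^T\eta_s^2\,d\langle B\rangle_s\Big)^{p/2}\right]\le E_P\left[\sup_{t\in[0,T]}|M_t|^p\right]\le C_p\,E_P\left[\Big(\int_0^T\eta_s^2\,d\langle B\rangle_s\Big)^{p/2}\right].
\end{align*}

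Next I would invoke the structure of $\mathcal{P}$ coming from the Denis--Hu--Peng representation: under each $P$ the density of $\langle B\rangle$ lies in $[\underline{\sigma}^2,\bar{\sigma}^2]$, so that $\underline{\sigma}^2\int_0^T\eta_s^2\,ds\le\int_0^T\eta_s^2\,d\langle B\rangle_s\le\bar{\sigma}^2\int_0^T\eta_s^2\,ds$ holds $P$-a.s. Inserting these bounds replaces the $d\langle B\rangle$-integrals by $ds$-integrals at the cost of the factors $\underline{\sigma}^p$ and $\bar{\sigma}^p$. For the upper estimate I would first dominate $E_P[(\int_0^T\eta_s^2\,ds)^{p/2}]$ by $\hat{\mathbb{E}}[(\int_0^T\eta_s^2\,ds)^{p/2}]$ and then take $\sup_{P\in\mathcal{P}}$ on the left, turning $\sup_P E_P[\sup_t|M_t|^p]$ into $\hat{\mathbb{E}}[\sup_t|M_t|^p]$. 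For the lower estimate I would instead bound $E_P[\sup_t|M_t|^p]$ from above by $\hat{\mathbb{E}}[\sup_t|M_t|^p]$ and take $\sup_P$ on the left-hand integral, using $\sup_P E_P[(\int_0^T\eta_s^2\,ds)^{p/2}]=\hat{\mathbb{E}}[(\int_0^T\eta_s^2\,ds)^{p/2}]$.

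The main obstacle is not the bookkeeping of constants but the two consistency facts that legitimize the reduction: identifying the $G$-integral $\int_0^\cdot\eta_s\,dB_s$ with the classical It\^o integral under every $P\in\mathcal{P}$, so that the classical BDG inequality applies with one $P$-independent constant pair; and the quadratic-variation density bound $\underline{\sigma}^2\le d\langle B\rangle_s/ds\le\bar{\sigma}^2$. Both are standard in the $G$-expectation theory but must be cited with care. A secondary technical point is the density passage from $M_G^{p,0}(0,T)$ to $H_G^p(0,T)$, which is justified because the stochastic integral and the map $\eta\mapsto(\int_0^T\eta_s^2\,ds)^{1/2}$ are continuous in the relevant norms and the supremum functional is monotone, so the inequality survives the limit.
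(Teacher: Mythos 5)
Your argument is correct and is essentially the standard proof of this estimate: the paper itself states the result as a citation to \cite{hu2014backward} without reproducing a proof, and the proof there proceeds exactly as you describe, by applying the classical BDG inequality to $\int_0^\cdot\eta_s\,dB_s$ under each $P\in\mathcal{P}$, using the density bound $\underline{\sigma}^2\leq d\langle B\rangle_s/ds\leq\bar{\sigma}^2$, and then taking suprema over $\mathcal{P}$ via Theorem \ref{thm2.1}. You also handle the one delicate point correctly, namely that for the lower bound one must first dominate $E_P[\sup_t|M_t|^p]$ by $\hat{\mathbb{E}}[\sup_t|M_t|^p]$ before taking the supremum over $P$ on the other side.
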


In the following text, $C$ always denotes a positive number whose value depends on the subscript and may change from line to line.

\begin{theorem}[\cite{hu2014backward,hu2024BSDE,song2011some}]\label{EsupEt}
	For $q>p\geq 1$ and $\xi \in Lip(\Omega _{T})$, we have
	\begin{equation*}
		\hat{\mathbb{E}}\left[\sup_{t\in\lbrack0,T]}\hat{\mathbb{E}}_{t}\left[\left\vert \xi \right\vert ^{p}\right] \right] \leq C_{p,q}\left\{ \left( \hat{\mathbb{E}}\left[ \left\vert \xi \right\vert^{q}\right]\right)^{\frac{p}{q}}+\hat{\mathbb{E}}\left[ \left\vert \xi \right\vert ^{q}\right]\right\} .
	\end{equation*}
\end{theorem}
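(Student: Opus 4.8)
The plan is to reduce the estimate to a maximal inequality for a single nonnegative $G$-martingale by first trading the exponent $p$ for $q$. By the conditional Hölder inequality for the $G$-expectation, with conjugate exponents $q/p$ and $q/(q-p)$,
\begin{equation*}
	\hat{\mathbb{E}}_{t}\left[|\xi|^{p}\right]\le\left(\hat{\mathbb{E}}_{t}\left[|\xi|^{q}\right]\right)^{\frac{p}{q}},
\end{equation*}
so that, writing $N_{t}:=\hat{\mathbb{E}}_{t}[|\xi|^{q}]$,
\begin{equation*}
	\sup_{t\in[0,T]}\hat{\mathbb{E}}_{t}\left[|\xi|^{p}\right]\le\left(\sup_{t\in[0,T]}N_{t}\right)^{\frac{p}{q}}.
\end{equation*}
Since $\xi\in Lip(\Omega_{T})$ has moments of every order under $\hat{\mathbb{E}}$, one has $|\xi|^{q}\in L_{G}^{1}(\Omega_{T})$, and $N$ is a well-defined $G$-martingale admitting a continuous modification with deterministic initial value $N_{0}=\hat{\mathbb{E}}[|\xi|^{q}]$. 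It therefore suffices to bound $\hat{\mathbb{E}}[(\sup_{t}N_{t})^{p/q}]$ by a constant multiple of $(\hat{\mathbb{E}}[|\xi|^{q}])^{p/q}$.

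The second step is a maximal inequality obtained by descending to each $P\in\mathcal{P}$. For every $P\in\mathcal{P}$ one has the domination $E_{P}[\,\cdot\mid\mathcal{F}_{s}]\le\hat{\mathbb{E}}_{s}[\,\cdot\,]$ $P$-a.s.; combined with the martingale identity $N_{s}=\hat{\mathbb{E}}_{s}[N_{t}]$ for $s\le t$, this shows that $N$ is a nonnegative continuous $P$-supermartingale for each such $P$. Doob's weak-type maximal inequality for nonnegative supermartingales then gives, for every $\lambda>0$,
\begin{equation*}
	\lambda\,P\left(\sup_{t\in[0,T]}N_{t}>\lambda\right)\le E_{P}[N_{0}]=\hat{\mathbb{E}}[|\xi|^{q}],
\end{equation*}
uniformly in $P$. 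Taking the supremum over $P\in\mathcal{P}$ and recalling the definition of the capacity yields the tail bound $\mathrm{c}(\sup_{t}N_{t}>\lambda)\le\min\{1,\hat{\mathbb{E}}[|\xi|^{q}]/\lambda\}$.

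Finally I would integrate this tail bound. Using $\hat{\mathbb{E}}[X]\le\int_{0}^{\infty}\mathrm{c}(X>\lambda)\,d\lambda$ for $X=(\sup_{t}N_{t})^{p/q}$ together with the change of variables $\lambda=\mu^{p/q}$, the distribution-function estimate splits at $\mu=\hat{\mathbb{E}}[|\xi|^{q}]$ into two elementary convergent integrals, the first controlled by $\mathrm{c}\le 1$ and the second by the weak-type bound, which produces
\begin{equation*}
	\hat{\mathbb{E}}\left[\sup_{t\in[0,T]}\hat{\mathbb{E}}_{t}\left[|\xi|^{p}\right]\right]\le\frac{q}{q-p}\left(\hat{\mathbb{E}}\left[|\xi|^{q}\right]\right)^{\frac{p}{q}},
\end{equation*}
which is in fact stronger than the asserted inequality and implies it with $C_{p,q}=q/(q-p)$. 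The main obstacle is not the final computation but the $G$-framework bookkeeping of the first two steps: one must secure a continuous (hence $P$-a.s.\ regular) modification of $N$ so that classical Doob applies under each $P$, invoke the domination inequality and the representation $\hat{\mathbb{E}}=\sup_{P}E_{P}$ of Theorem \ref{thm2.1} together with its extension to the measurable functional $\sup_{t}N_{t}$, and verify that the pointwise supermartingale estimates assemble uniformly in $P$ into the capacity bound. An alternative route following the cited references would instead apply the $G$-martingale decomposition $N_{t}=N_{0}+\int_{0}^{t}z_{s}dB_{s}-K_{t}$ and the $G$-BDG inequality (\ref{bdg}), which naturally reproduces the two-term right-hand side as stated.
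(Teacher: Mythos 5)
The paper does not prove Theorem \ref{EsupEt}: it is quoted without proof from \cite{hu2014backward,hu2024BSDE,song2011some}, so there is no in-paper argument to compare yours against. On its own merits, your proof is sound and in fact gives a sharper, homogeneous bound $\frac{q}{q-p}\bigl(\hat{\mathbb{E}}[|\xi|^{q}]\bigr)^{p/q}$, which trivially implies the stated two-term right-hand side (the second term $\hat{\mathbb{E}}[|\xi|^{q}]$ in the statement is an artifact of the decomposition-based proofs in the cited references, which bound $\sup_t\hat{\mathbb{E}}_t[\xi]$ via $N_{t}=N_{0}+\int_{0}^{t}z_{s}dB_{s}-K_{t}$ and estimate $K_T$ separately --- the route you mention as an alternative). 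Each step checks out: conditional H\"older under $\hat{\mathbb{E}}_t$ is standard for sublinear expectations; for $\xi\in Lip(\Omega_T)$ one has $|\xi|^{q}\in Lip(\Omega_T)$ and $N_t=\hat{\mathbb{E}}_t[|\xi|^{q}]$ is built from solutions of the $G$-heat equation, so it is continuous in $t$ outright and no modification argument is needed; the domination $E_P[\,\cdot\mid\mathcal{F}_s]\le\hat{\mathbb{E}}_s[\,\cdot\,]$ $P$-a.s.\ follows from the representation $\hat{\mathbb{E}}_s[X]=\operatorname*{ess\,sup}_{Q\in\mathcal{P}(s,P)}E_Q[X\mid\mathcal{F}_s]$ with $P\in\mathcal{P}(s,P)$, so $N$ is a nonnegative continuous $P$-supermartingale and Doob's weak-type inequality applies; and $\hat{\mathbb{E}}[X]\le\int_0^\infty \mathrm{c}(X>\lambda)\,d\lambda$ holds for the canonical extension $\sup_{P}E_{P}$ to measurable nonnegative $X$ by interchanging $\sup_P$ with the layer-cake integral. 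The integral $\int_{A^{p/q}}^{\infty}A\mu^{-q/p}d\mu$ converges precisely because $q>p$, which is where the hypothesis enters. The only caveat is presentational: the domination inequality and the path regularity are themselves nontrivial facts that must be imported from the same literature the theorem is cited from, so your proof is a legitimate reduction rather than a fully self-contained derivation --- but you flag exactly these points yourself.
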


\section{Yosida approximation}\label{s3}
Set $T\in[0,\infty)$. For convenience, here we only study the $G$-BSDE in the following form:
\begin{align}\label{GBSDE}
	Y_{t}=\xi+\int_{t}^{T}f(s,Y_{s},Z_{s})ds-\int_{t}^{T}Z_{s}dB_{s}-(K_{T}-K_{t}).
\end{align}

Let $u_{t}:[0,T]\rightarrow\mathrm{R}^{+}$ be a process such that $\int_{0}^{T}u_{s}^{2}ds\leq M<\infty$. The generator $f(\omega,t,y,z):\Omega\times[0,T]\times\mathrm{R}\times\mathrm{R}\rightarrow\mathrm{R}$ satisfies the following assumptions.

\begin{description}
	\item[(H1)] For all $(y,z)\in \mathrm{R}^{2}$, $f(\cdot,\cdot,y,z)\in M_{G}^{1}(0,T)$. For each fixed $z$, $f$ is uniformly continuous in $y$. 
	
	\item[(H2)] $f$ satisfies the time-varying monotonicity condition with respect to $y$, i.e., 
	\[(y_{1}-y_{2})(f(t,y_{1},z)-f(t,y_{2},z))\leq u_{t}|y_{1}-y_{2}|^{2}. \]
	
	\item[(H3)] For constant $\lambda\geq0$, there exists a positive process $\phi$ such that 
	\[|f(t,y,0)|\leq \phi(t)+u_{t}|y|,\] 
	and $|f(\cdot,0,0)|\vee \phi := h\in M_{G}^{2+\lambda}(0,T)$.
	
	\item[(H4)] $f$ satisfies the Lipschitz condition with respect to $z$, i.e., 
	\[|f(t,y_{1},z)-f(t,y_{2},z)|\leq L|z_{1}-z_{2}|. \]
\end{description}

In the following, we approximate $f$ by constructing a sequence of functions via Yosida approximation.

\begin{lemma}\label{Falpha}
	Let $f$ satisfy (H1)-(H2), (H4) and let $F(\omega,t,y,z)=f(\omega,t,y,z)-u_{t}y$. For any $\alpha>0$ and  $\omega,t,y,z$, the following equation exists a unique solution $x$,
	\[x-\alpha F(\omega,t,x,z)=y. \]
	Define $x=J^{\alpha}(\omega,t,y,z) $ and \[F^{\alpha}(\omega,t,y,z)=F(\omega,t,J^{\alpha}(\omega,t,y,z),z)=\frac{1}{\alpha}(J^{\alpha}(\omega,t,y,z)-y).\]
	Then $F^{\alpha}(\omega,t,y,z)$ satisfy the following properties.
	
	\begin{description}
		\item[(i)] For $y_{1},y_{2}\in\mathrm{R}$, 
		\begin{align*}
			&|J^{\alpha}(\omega,t,y_{1},z)-J^{\alpha}(\omega,t,y_{2},z)|\leq|y_{1}-y_{2}|,\\
			&|F^{\alpha}(\omega,t,y_{1},z)-F^{\alpha}(\omega,t,y_{2},z)|\leq\frac{2}{\alpha}|y_{1}-y_{2}|,\\
			&\left( F^{\alpha}(\omega,t,y_{1},z)-F^{\alpha}(\omega,t,y_{2},z)\right) (y_{1}-y_{2})\leq0.
		\end{align*}
		
		\item[(ii)] \[|F^{\alpha}(\omega,t,y,z)|\leq|F(\omega,t,y,z)|.\]
		
		\item[(iii)]
		\[\lim\limits_{\alpha\rightarrow0}J^{\alpha}(\omega,t,y,z)=y.\]
		
		\item[(iv)] For $\alpha,\beta>0$ and $y_{1},y_{2}\in\mathrm{R}$,
		\[\left( F^{\alpha}(\omega,t,y_{1},z)-F^{\beta}(\omega,t,y_{2},z)\right) (y_{1}-y_{2})\leq(\alpha+\beta)\left( |F^{\alpha}(\omega,t,y_{1},z)|+|F^{\beta}(\omega,t,y_{2},z)|\right)^{2}.\]
		
		\item[(v)] If there exists $\{y^{\alpha}\}$ such that 
		\[\lim\limits_{\alpha\rightarrow0}y^{\alpha}=y, \]
		then 
		\[ \lim\limits_{\alpha\rightarrow0}F^{\alpha}(\omega,t,y^{\alpha},z)=F(\omega,t,y,z). \]	
		
		\item[(vi)]	For $z_{1},z_{2}\in\mathrm{R}$, 
		\begin{align*}
			&|J^{\alpha}(\omega,t,y,z_{1})-J^{\alpha}(\omega,t,y,z_{2})|\leq\alpha L|z_{1}-z_{2}|,\\
			&|F^{\alpha}(\omega,t,y,z_{1})-F^{\alpha}(\omega,t,y,z_{2})|\leq L|z_{1}-z_{2}|.
		\end{align*}
		
	\end{description}	
\end{lemma}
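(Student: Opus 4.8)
My plan is to treat $y\mapsto F(\omega,t,y,z)$, for each frozen $(\omega,t,z)$, as a continuous non-increasing function of $y$, so that $J^\alpha$ is precisely the resolvent of the (maximal monotone) operator $-F$ and $F^\alpha$ its Yosida regularization; every assertion then reduces to the standard ``subtract the two defining identities and pair with $y_1-y_2$'' computation. The crucial first step is the algebraic identity that powers everything and shows why one subtracts $u_t y$: for fixed $(\omega,t,z)$,
\[ (y_1-y_2)\bigl(F(\omega,t,y_1,z)-F(\omega,t,y_2,z)\bigr)=(y_1-y_2)\bigl(f(\omega,t,y_1,z)-f(\omega,t,y_2,z)\bigr)-u_t|y_1-y_2|^2\le 0, \]
by (H2), so $F(\omega,t,\cdot,z)$ is non-increasing, while (H1) gives its continuity. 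With this I would establish that $x\mapsto x-\alpha F(\omega,t,x,z)$ is a homeomorphism of $\mathbb{R}$: for $x_1>x_2$ its increment equals $(x_1-x_2)-\alpha(F(x_1)-F(x_2))\ge x_1-x_2>0$, so it is continuous and strictly increasing; monotonicity further forces $-\alpha F(x)\ge-\alpha F(0)$ for $x\ge0$ and the reverse for $x\le0$, so the map is coercive and tends to $\pm\infty$ as $x\to\pm\infty$. The intermediate value theorem then gives a unique solution $x=J^\alpha(\omega,t,y,z)$, and $\alpha F(x)=x-y$ yields the two representations $F^\alpha=F(J^\alpha(\cdot))=\frac1\alpha(J^\alpha(\cdot)-y)$ that the rest of the proof relies on.

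For the listed properties I would then proceed mechanically. Writing $x_i=J^\alpha(y_i)$ and subtracting $x_i-\alpha F(x_i)=y_i$, pairing with $x_1-x_2$ and using that $F$ is non-increasing gives $(x_1-x_2)^2\le(y_1-y_2)(x_1-x_2)$, hence the non-expansiveness in (i); the $\frac2\alpha$-Lipschitz bound follows from $F^\alpha(y_1)-F^\alpha(y_2)=\frac1\alpha[(x_1-x_2)-(y_1-y_2)]$ together with the triangle inequality, and the sign condition from expanding $(F(x_1)-F(x_2))(x_1-x_2)-\alpha(F(x_1)-F(x_2))^2\le0$. For (ii), inserting $y-x=-\alpha F(x)$ into $(F(y)-F(x))(y-x)\le0$ yields $F(x)^2\le F(y)F(x)$, i.e.\ $|F^\alpha(y)|\le|F(y)|$; and (iii) is then immediate from $|J^\alpha(y)-y|=\alpha|F^\alpha(y)|\le\alpha|F(y)|\to0$.

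Property (iv) is the same expansion carried out with two parameters: with $a=F^\alpha(y_1)$ and $b=F^\beta(y_2)$ one obtains $(F^\alpha(y_1)-F^\beta(y_2))(y_1-y_2)\le-\alpha a^2-\beta b^2+(\alpha+\beta)ab$, and discarding the two nonpositive square terms leaves a quantity bounded by $(\alpha+\beta)(|a|+|b|)^2$. For (v) I would combine (i) and (iii), estimating $|J^\alpha(y^\alpha)-y|\le|y^\alpha-y|+|J^\alpha(y)-y|\to0$, so that continuity of $F(\omega,t,\cdot,z)$ gives $F^\alpha(y^\alpha)=F(J^\alpha(y^\alpha))\to F(y)$. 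Finally, (vi) follows by subtracting the defining identities for two values of $z$, splitting $F(x_2,z_1)-F(x_2,z_2)=f(x_2,z_1)-f(x_2,z_2)$ (the $u_tx_2$ terms cancel) and invoking the $z$-Lipschitz bound (H4): the monotone term drops out, leaving $(x_1-x_2)^2\le\alpha L|z_1-z_2||x_1-x_2|$, which gives the $\alpha L$-estimate for $J^\alpha$ and, after dividing by $\alpha$, the $L$-estimate for $F^\alpha$.

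The genuinely load-bearing step is the first one, namely showing that $J^\alpha$ is well defined, i.e.\ that $x\mapsto x-\alpha F(\omega,t,x,z)$ is a bijection of $\mathbb{R}$, since this is where both halves of (H1)--(H2) (continuity and monotonicity) are simultaneously needed and where the $-u_ty$ reduction is essential; everything else is elementary once the resolvent and its two representations are available. I expect the only points requiring mild care to be in (iv), where one must simply discard the negative square terms rather than over-optimize in order to land on exactly the stated right-hand side, and in (v), where it is plain continuity of $F$ in $y$ that is actually invoked.
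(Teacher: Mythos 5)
Your proposal is correct and follows essentially the same route as the paper: every item is obtained by subtracting the defining identities $x_i-\alpha F(\omega,t,x_i,z)=y_i$, pairing with an appropriate difference, and invoking the dissipativity of $F(\omega,t,\cdot,z)$ coming from (H2), exactly as in the paper's proof. The only (harmless) deviation is that you prove solvability of $x-\alpha F(\omega,t,x,z)=y$ directly, via strict monotonicity and coercivity of $x\mapsto x-\alpha F(\omega,t,x,z)$ plus the intermediate value theorem, whereas the paper cites Corollary D.10 of Da Prato--Zabczyk for the same fact; your streamlined derivations of the sign condition in (i) and of (ii) are likewise equivalent to the paper's computations.
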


\begin{proof}
	According to (H2), it is clearly that 
	\[\left( F(\omega,t,y_{1},z)-F(\omega,t,y_{2},z)\right) (y_{1}-y_{2})\leq0.  \]
	This indicates that $F$ is a continuous dissipative mapping. From Corollary D.10 in \cite{daprato1992stochastic}, the solution $J^{\alpha}(\omega,t,y,z) $ exists and is unique.
	
	(i): It is obvious that
	\begin{equation}\label{pf(i)}
		\left(J^{\alpha}(\omega,t,y_{1},z)-J^{\alpha}(\omega,t,y_{2},z) \right)-\alpha\left(F(\omega,t,J^{\alpha}(\omega,t,y_{1},z),z)-F(\omega,t,J^{\alpha}(\omega,t,y_{2},z),z) \right)=y_{1}-y_{2}.
	\end{equation}
	Multiply both sides of the equation by $\left(J^{\alpha}(\omega,t,y_{1},z)-J^{\alpha}(\omega,t,y_{2},z) \right)$, then
	\begin{align*}
		&\left(J^{\alpha}(\omega,t,y_{1},z)-J^{\alpha}(\omega,t,y_{2},z) \right)^{2}-(y_{1}-y_{2})\left(J^{\alpha}(\omega,t,y_{1},z)-J^{\alpha}(\omega,t,y_{2},z) \right)\\
		=&\alpha\left(F(\omega,t,J^{\alpha}(\omega,t,y_{1},z),z)-F(\omega,t,J^{\alpha}(\omega,t,y_{2},z),z) \right)\left(J^{\alpha}(\omega,t,y_{1},z)-J^{\alpha}(\omega,t,y_{2},z) \right).
	\end{align*}
	Since 
	\[\alpha\left(F(\omega,t,J^{\alpha}(\omega,t,y_{1},z),z)-F(\omega,t,J^{\alpha}(\omega,t,y_{2},z),z) \right)\left(J^{\alpha}(\omega,t,y_{1},z)-J^{\alpha}(\omega,t,y_{2},z) \right)\leq0,\]
	we have $(y_{1}-y_{2})\left(J^{\alpha}(\omega,t,y_{1},z)-J^{\alpha}(\omega,t,y_{2},z) \right)\geq0 $.
	Thus, 
	\[\left(J^{\alpha}(\omega,t,y_{1},z)-J^{\alpha}(\omega,t,y_{2},z) \right)^{2}\leq (y_{1}-y_{2})\left(J^{\alpha}(\omega,t,y_{1},z)-J^{\alpha}(\omega,t,y_{2},z) \right).  \]
	It follows that $|J^{\alpha}(\omega,t,y_{1},z)-J^{\alpha}(\omega,t,y_{2},z)|\leq|y_{1}-y_{2}|$. 
	
	Then, according to the Equation (\ref{pf(i)}), we get
	\begin{align*}
		&\left|F(\omega,t,J^{\alpha}(\omega,t,y_{1},z),z)-F(\omega,t,J^{\alpha}(\omega,t,y_{2},z),z) \right|\\
		=&\frac{1}{\alpha}\left|J^{\alpha}(\omega,t,y_{1},z)-J^{\alpha}(\omega,t,y_{2},z)-(y_{1}-y_{2}) \right|\\
		\leq&\frac{2}{\alpha}|y_{1}-y_{2}|. 
	\end{align*}
	
	Next, multiply both sides of Equation (\ref{pf(i)}) by $(y_{1}-y_{2})$, we obtain
	\begin{align*}
		&\left(J^{\alpha}(\omega,t,y_{1},z)-J^{\alpha}(\omega,t,y_{2},z) \right)(y_{1}-y_{2})-(y_{1}-y_{2})^{2}\\
		=&\alpha\left(F(\omega,t,J^{\alpha}(\omega,t,y_{1},z),z)-F(\omega,t,J^{\alpha}(\omega,t,y_{2},z),z) \right)(y_{1}-y_{2}).
	\end{align*}
	Recall that $(y_{1}-y_{2})\left(J^{\alpha}(\omega,t,y_{1},z)-J^{\alpha}(\omega,t,y_{2},z) \right)\geq0 $ and $|J^{\alpha}(\omega,t,y_{1},z)-J^{\alpha}(\omega,t,y_{2},z)|\leq|y_{1}-y_{2}| $, it follows that
	\[\alpha\left(F(\omega,t,J^{\alpha}(\omega,t,y_{1},z),z)-F(\omega,t,J^{\alpha}(\omega,t,y_{2},z),z) \right)(y_{1}-y_{2})\leq0.\]
	
	(ii) Subtract $y-\alpha F(\omega,t,y,z)$ from both sides of the equation $J^{\alpha}(\omega,t,y,z)-\alpha F(\omega,t,J^{\alpha}(\omega,t,y,z),z)=y$ simultaneously, then we yield 
	\begin{align*}
		J^{\alpha}(\omega,t,y,z)-y-\alpha\left(F(\omega,t,J^{\alpha}(\omega,t,y,z),z)-F(\omega,t,y,z) \right) =\alpha F(\omega,t,y,z).
	\end{align*}
	Then multiply both sides of the equation by $\left(F(\omega,t,J^{\alpha}(\omega,t,y,z),z)-F(\omega,t,y,z)  \right)$, 
	\begin{align*}
		&\alpha\left(F(\omega,t,J^{\alpha}(\omega,t,y,z),z)-F(\omega,t,y,z) \right)^{2}-\left(J^{\alpha}(\omega,t,y,z)-y \right) \left(F(\omega,t,J^{\alpha}(\omega,t,y,z),z)-F(\omega,t,y,z)  \right)\\
		=&-\alpha F(\omega,t,y,z) \left(F(\omega,t,J^{\alpha}(\omega,t,y,z),z)-F(\omega,t,y,z)  \right).
	\end{align*}
	Since $-\left(J^{\alpha}(\omega,t,y,z)-y \right) \left(F(\omega,t,J^{\alpha}(\omega,t,y,z),z)-F(\omega,t,y,z)  \right)\geq0 $, it follows that 
	\[-\alpha F(\omega,t,y,z) \left(F(\omega,t,J^{\alpha}(\omega,t,y,z),z)-F(\omega,t,y,z)  \right)\geq0. \]
	Thus 
	\[ \left(F(\omega,t,J^{\alpha}(\omega,t,y,z),z)-F(\omega,t,y,z) \right)^{2}\leq- F(\omega,t,y,z) \left(F(\omega,t,J^{\alpha}(\omega,t,y,z),z)-F(\omega,t,y,z)  \right) . \]
	Through standard calculation, $|F(\omega,t,J^{\alpha}(\omega,t,y,z),z)|^{2}\leq|F(\omega,t,J^{\alpha}(\omega,t,y,z),z)F(\omega,t,y,z)|$ can be obtained. Then we get the desire result.
	
	(iii) From (ii), we get 
	\begin{align*}
		|y-J^{\alpha}(\omega,t,y,z)|=&|\alpha F^{\alpha}(\omega,t,y,z)|\\
		\leq & \alpha |F(\omega,t,y,z)|.
	\end{align*}
	Thus, $\lim\limits_{\alpha\rightarrow0}|y-J^{\alpha}(\omega,t,y,z)|=0$.
	
	(iv) It is easy to check that
	\[J^{\alpha}(\omega,t,y_{1},z)-J^{\beta}(\omega,t,y_{2},z)-(\alpha F^{\alpha}(\omega,t,y_{1},z)-\beta F^{\beta}(\omega,t,y_{2},z))=y_{1}-y_{2}. \]
	Then 
	\begin{align*}
		&\left( F^{\alpha}(\omega,t,y_{1},z)-F^{\beta}(\omega,t,y_{2},z)\right)(y_{1}-y_{2})\\
		=&\left( F^{\alpha}(\omega,t,y_{1},z)-F^{\beta}(\omega,t,y_{2},z)\right) \left( J^{\alpha}(\omega,t,y_{1},z)-J^{\beta}(\omega,t,y_{2},z) \right) \\
		&-\left( F^{\alpha}(\omega,t,y_{1},z)-F^{\beta}(\omega,t,y_{2},z)\right) \left( \alpha F^{\alpha}(\omega,t,y_{1},z)-\beta F^{\beta}(\omega,t,y_{2},z) \right).
	\end{align*}
	By the definition of $F^{\alpha}$, it follows that $\left( F^{\alpha}(\omega,t,y_{1},z)-F^{\beta}(\omega,t,y_{2},z)\right) \left( J^{\alpha}(\omega,t,y_{1},z)-J^{\beta}(\omega,t,y_{2},z) \right)\leq0 $. Now we have 
	\begin{align*}
		&\left( F^{\alpha}(\omega,t,y_{1},z)-F^{\beta}(\omega,t,y_{2},z)\right)(y_{1}-y_{2})\\
		\leq&-\left( F^{\alpha}(\omega,t,y_{1},z)-F^{\beta}(\omega,t,y_{2},z)\right) \left( \alpha F^{\alpha}(\omega,t,y_{1},z)-\beta F^{\beta}(\omega,t,y_{2},z) \right)\\
		\leq&-\alpha\left(F^{\alpha}(\omega,t,y_{1},z) \right)^{2}+(\alpha+\beta)F^{\alpha}(\omega,t,y_{1},z)F^{\beta}(\omega,t,y_{2},z) -\beta\left(F^{\alpha}(\omega,t,y_{2},z) \right)^{2}\\
		\leq& (\alpha+\beta)\left( |F^{\alpha}(\omega,t,y_{1},z)|+|F^{\beta}(\omega,t,y_{2},z)| \right)^{2}.
	\end{align*}
	
	(v) Note that 
	\[ F^{\alpha}(\omega,t,y^{\alpha},z)=F(\omega,t,J^{\alpha}(\omega,t,y^{\alpha},z),z). \]
	From (i) and (iii), we get
	\begin{align*}
		&\lim\limits_{\alpha\rightarrow0}|J^{\alpha}(\omega,t,y^{\alpha},z)-y|\\
		\leq& \lim\limits_{\alpha\rightarrow0}|J^{\alpha}(\omega,t,y^{\alpha},z)-J^{\alpha}(\omega,t,y,z)|+\lim\limits_{\alpha\rightarrow0}|J^{\alpha}(\omega,t,y,z)-y|\\
		\leq&\lim\limits_{\alpha\rightarrow0}|y^{\alpha}-y|+\lim\limits_{\alpha\rightarrow0}|J^{\alpha}(\omega,t,y,z)-y|\\
		=&0.
	\end{align*}
	According to the continuity of $F$, we obtain the desired result.
	
	(vi) Clearly,
	\begin{equation}\label{pf(vi)}
		J^{\alpha}(\omega,t,y,z_{1})-J^{\alpha}(\omega,t,y,z_{2})-\alpha\left( F(\omega,t,J^{\alpha}(\omega,t,y,z_{1}),z_{1})-F(\omega,t,J^{\alpha}(\omega,t,y,z_{2}),z_{2})  \right)=0.
	\end{equation}
	It follows that 
	\begin{align*}
		&J^{\alpha}(\omega,t,y,z_{1})-J^{\alpha}(\omega,t,y,z_{2})-\alpha\left( F(\omega,t,J^{\alpha}(\omega,t,y,z_{1}),z_{1})-F(\omega,t,J^{\alpha}(\omega,t,y,z_{2}),z_{1})  \right)\\
		=&\alpha\left( F(\omega,t,J^{\alpha}(\omega,t,y,z_{2}),z_{1})-F(\omega,t,J^{\alpha}(\omega,t,y,z_{2}),z_{2})  \right).
	\end{align*}
	Then multiply both sides of the equation by $J^{\alpha}(\omega,t,y,z_{1})-J^{\alpha}(\omega,t,y,z_{2}) $, we yield
	\begin{align*}
		&\left( J^{\alpha}(\omega,t,y,z_{1})-J^{\alpha}(\omega,t,y,z_{2})\right) ^{2}\\
		=&\alpha\left( F(\omega,t,J^{\alpha}(\omega,t,y,z_{1}),z_{1})-F(\omega,t,J^{\alpha}(\omega,t,y,z_{2}),z_{1})  \right)\left( J^{\alpha}(\omega,t,y,z_{1})-J^{\alpha}(\omega,t,y,z_{2})\right)\\
		&+\alpha\left( F(\omega,t,J^{\alpha}(\omega,t,y,z_{2}),z_{1})-F(\omega,t,J^{\alpha}(\omega,t,y,z_{2}),z_{2})  \right)\left( J^{\alpha}(\omega,t,y,z_{1})-J^{\alpha}(\omega,t,y,z_{2})\right).
	\end{align*}
	Due to $\alpha\left( F(\omega,t,J^{\alpha}(\omega,t,y,z_{1}),z_{1})-F(\omega,t,J^{\alpha}(\omega,t,y,z_{2}),z_{1})  \right)\left( J^{\alpha}(\omega,t,y,z_{1})-J^{\alpha}(\omega,t,y,z_{2})\right)\leq0 $, we have
	\begin{align*}
		&\left( J^{\alpha}(\omega,t,y,z_{1})-J^{\alpha}(\omega,t,y,z_{2})\right) ^{2}\\
		\leq&\alpha\left( F(\omega,t,J^{\alpha}(\omega,t,y,z_{2}),z_{1})-F(\omega,t,J^{\alpha}(\omega,t,y,z_{2}),z_{2})  \right)\left( J^{\alpha}(\omega,t,y,z_{1})-J^{\alpha}(\omega,t,y,z_{2})\right).
	\end{align*}
	Combining (H4), we obtain
	\begin{align*}
		&\left| J^{\alpha}(\omega,t,y,z_{1})-J^{\alpha}(\omega,t,y,z_{2})\right|\\
		\leq&\left|\alpha F(\omega,t,J^{\alpha}(\omega,t,y,z_{2}),z_{1})-F(\omega,t,J^{\alpha}(\omega,t,y,z_{2}),z_{2}) \right| \\
		\leq&\alpha\left| f(\omega,t,J^{\alpha}(\omega,t,y,z_{2}),z_{1})-f(\omega,t,J^{\alpha}(\omega,t,y,z_{2}),z_{2}) \right|  \\
		\leq& \alpha L|z_{1}-z_{2}|.
	\end{align*}
	
	Then by Equation (\ref{pf(vi)}), we yield
	\begin{align*}
		&\left| F(\omega,t,J^{\alpha}(\omega,t,y,z_{1}),z_{1})-F(\omega,t,J^{\alpha}(\omega,t,y,z_{2}),z_{2})  \right| \\
		=&\frac{1}{\alpha}\left|J^{\alpha}(\omega,t,y,z_{1})-J^{\alpha}(\omega,t,y,z_{2}) \right| \\
		\leq&L|z_{1}-z_{2}|.
	\end{align*}
	The proof is completed.
\end{proof}

\begin{lemma}\label{falpha}
	Let $f$ satisfy (H1)-(H4) and let $F, F^{\alpha},J^{\alpha}$ be defined as in Lemma \ref{Falpha}. Define 
	\[f^{\alpha}(\omega,t,y,z)=F^{\alpha}(\omega,t,y,z)+u_{t}y. \]
	Then $f^{\alpha}(\omega,t,y,z)$ satisfy the following properties.
	\begin{description}
		\item[(i)] For $y_{1},y_{2}\in\mathrm{R}$, 
		\begin{align*}
			&|f^{\alpha}(\omega,t,y_{1},z)-f^{\alpha}(\omega,t,y_{2},z)|\leq(\frac{2}{\alpha}+u_{t})|y_{1}-y_{2}|,\\
			&\left( f^{\alpha}(\omega,t,y_{1},z)-f^{\alpha}(\omega,t,y_{2},z)\right)(y_{1}-y_{2}) \leq u_{t}|y_{1}-y_{2}|^{2}.
		\end{align*}
		
		\item[(ii)]
		\begin{align*}
			&|f^{\alpha}(\omega,t,y,z)|\leq|f(\omega,t,y,z)|+2u_{t}|y|,\\
			&|f^{\alpha}(\omega,t,y,0)|\leq h(t)+3u_{t}|y|.
		\end{align*}
		
		\item[(iii)] For $\alpha,\beta>0$ and $y_{1},y_{2}\in\mathrm{R}$,
		\begin{align*}
			&\left( f^{\alpha}(\omega,t,y_{1},z)-f^{\beta}(\omega,t,y_{2},z)\right) (y_{1}-y_{2})\\
			\leq&(\alpha+\beta)\left( |f^{\alpha}(\omega,t,y_{1},z)|+|f^{\beta}(\omega,t,y_{2},z)|+u_{t}(|y_{1}|+|y_{2}|)\right)^{2}+u_{t}|y_{1}-y_{2}|^{2}
		\end{align*}
				
		\item[(iv)] If there exists $\{y^{\alpha}\}$ such that 
		\[\lim\limits_{\alpha\rightarrow0}y^{\alpha}=y, \]
		then 
		\[ \lim\limits_{\alpha\rightarrow0}f^{\alpha}(\omega,t,y^{\alpha},z)=f(\omega,t,y,z). \]	
		
		\item[(v)] For $z_{1},z_{2}\in\mathrm{R}$, 
		\[|f^{\alpha}(\omega,t,y,z_{1})-f^{\alpha}(\omega,t,y,z_{2})|\leq L|z_{1}-z_{2}|. \]
	\end{description}	
	
\end{lemma}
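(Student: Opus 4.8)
The plan is to derive every assertion directly from the corresponding property of $F^{\alpha}$ already established in Lemma \ref{Falpha}, exploiting the defining decomposition $f^{\alpha}(\omega,t,y,z)=F^{\alpha}(\omega,t,y,z)+u_{t}y$. Since $f^{\alpha}$ differs from $F^{\alpha}$ only by the affine-in-$y$ correction $u_{t}y$, each bound will be obtained by applying the triangle inequality (or by expanding the relevant bilinear expression) and tracking the extra contribution of $u_{t}y$. Throughout I would also use the inverse relations $F^{\alpha}=f^{\alpha}-u_{t}y$ and $F=f-u_{t}y$, so that quantities such as $|F^{\alpha}|$ or $|F|$ can be re-expressed in terms of $|f^{\alpha}|$, $|f|$ and $u_{t}|y|$ whenever the statement is phrased in terms of $f$.

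For (i), I would write $f^{\alpha}(\omega,t,y_{1},z)-f^{\alpha}(\omega,t,y_{2},z)=\bigl(F^{\alpha}(\omega,t,y_{1},z)-F^{\alpha}(\omega,t,y_{2},z)\bigr)+u_{t}(y_{1}-y_{2})$; the Lipschitz bound then follows from the $2/\alpha$ estimate in (i) of Lemma \ref{Falpha} plus $u_{t}|y_{1}-y_{2}|$, and the one-sided estimate follows because, after multiplying by $(y_{1}-y_{2})$, the $F^{\alpha}$ term is non-positive (again by (i) of Lemma \ref{Falpha}), leaving only $u_{t}|y_{1}-y_{2}|^{2}$. For (ii), I would combine $|f^{\alpha}|\le|F^{\alpha}|+u_{t}|y|$ with the contraction $|F^{\alpha}|\le|F|=|f-u_{t}y|\le|f|+u_{t}|y|$ from (ii) of Lemma \ref{Falpha} to obtain the first estimate; for the second I would set $z=0$ and feed in (H3), namely $|f(\omega,t,y,0)|\le\phi(t)+u_{t}|y|\le h(t)+u_{t}|y|$, to collect the constant $3u_{t}|y|$. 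For (v), the correction term cancels entirely since it is independent of $z$, so the claim is immediate from (vi) of Lemma \ref{Falpha}.

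The two slightly more involved steps are (iii) and (iv). For (iii) I would expand $\bigl(f^{\alpha}(\omega,t,y_{1},z)-f^{\beta}(\omega,t,y_{2},z)\bigr)(y_{1}-y_{2})$ as the sum of the analogous $F$-expression and $u_{t}(y_{1}-y_{2})^{2}$, bound the first summand by (iv) of Lemma \ref{Falpha}, and then upgrade the factors $|F^{\alpha}|$, $|F^{\beta}|$ appearing there to $|f^{\alpha}|+u_{t}|y_{1}|$ and $|f^{\beta}|+u_{t}|y_{2}|$ via $|F^{\alpha}|\le|f^{\alpha}|+u_{t}|y|$; the resulting square reproduces exactly the stated right-hand side. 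For (iv) I would use $f^{\alpha}(\omega,t,y^{\alpha},z)=F^{\alpha}(\omega,t,y^{\alpha},z)+u_{t}y^{\alpha}$, invoke the convergence $F^{\alpha}(\omega,t,y^{\alpha},z)\to F(\omega,t,y,z)$ from (v) of Lemma \ref{Falpha} (which requires $y^{\alpha}\to y$), note $u_{t}y^{\alpha}\to u_{t}y$, and conclude that the limit is $F(\omega,t,y,z)+u_{t}y=f(\omega,t,y,z)$.

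The main obstacle, if any, is purely bookkeeping rather than conceptual: in (iii) one must insert the correct absolute-value estimates $|F^{\alpha}(\omega,t,y_{i},z)|\le|f^{\alpha}(\omega,t,y_{i},z)|+u_{t}|y_{i}|$ so that the squared term matches the stated bound with the combined coefficient $u_{t}(|y_{1}|+|y_{2}|)$, without dropping cross terms. Everything else is a direct transfer of the already-proved properties of $F^{\alpha}$ through the linear shift $u_{t}y$, together with a single application of (H3).
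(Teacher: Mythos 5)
Your proposal is correct and follows exactly the route the paper intends: the paper's own proof of this lemma is just the one-line remark that everything follows from Lemma \ref{Falpha}, and your argument supplies precisely the intended details, transferring each item of Lemma \ref{Falpha} through the shift $f^{\alpha}=F^{\alpha}+u_{t}y$ (with the $u_{t}y$ terms cancelling in (v) and (H3) entering only in (ii)). No gaps.
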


\begin{proof}
	According to Lemma \ref{Falpha}, it is easy to prove (i)-(v).
\end{proof}

Now, by the Yosida approximation, we construct a series of functions $f^{\alpha}$ that satisfy the Lipschitz property with respect to both $y$ and $z$. In the following, we need to verify that $f^{\alpha}$ is well-defined under the $G$-expectation.

\begin{lemma}\label{falphamg1}
	Let $f$ satisfy (H1)-(H4) and let $f^{\alpha}$ be defined as in Lemma \ref{falpha}. Then $f^{\alpha}(\cdot,\cdot,y,z)\in M_{G}^{1}(0,T)$.
\end{lemma}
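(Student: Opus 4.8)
The plan is to reduce the statement to an integrability-plus-membership question about the resolvent $J^{\alpha}$, and then to settle membership by approximating the data $f$ with coefficients that are Lipschitz in $y$, whose resolvents visibly lie in $M_G^1(0,T)$. By the very definition in Lemma \ref{Falpha}, $F^\alpha(\omega,t,y,z)=\frac1\alpha(J^\alpha(\omega,t,y,z)-y)$, so
\[ f^\alpha(\omega,t,y,z)=\frac1\alpha\big(J^\alpha(\omega,t,y,z)-y\big)+u_t y. \]
Since $y$ is a fixed constant and $u_\cdot$ is deterministic with $\int_0^T u_s\,ds\le\sqrt{MT}<\infty$, the affine piece $-\frac1\alpha y+u_\cdot y$ already belongs to $M_G^1(0,T)$, so it suffices to prove $J^\alpha(\cdot,\cdot,y,z)\in M_G^1(0,T)$. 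For integrability I would invoke Lemma \ref{falpha}(ii) and (v) with (H3): $|f^\alpha(\cdot,\cdot,y,z)|\le h(\cdot)+3u_\cdot|y|+L|z|=:\Lambda\in M_G^1(0,T)$, because $h\in M_G^{2+\lambda}\subset M_G^1$ and $u_\cdot\in M_G^1$; consequently $|J^\alpha(\cdot,\cdot,y,z)|\le|y|+\alpha(\Lambda+u_\cdot|y|)$ is dominated by an $M_G^1$ function, giving $\hat{\mathbb{E}}[\int_0^T|J^\alpha_t|\,dt]<\infty$.

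The membership itself is the only real work. Dissipativity of $F$ (from (H2), used at the start of the proof of Lemma \ref{Falpha}) means precisely that $x\mapsto F(\omega,t,x,z)$ is continuous and non-increasing, and I would approximate it by the piecewise-linear interpolations $F_n(\omega,t,\cdot,z)$ of $F(\omega,t,\cdot,z)$ on the dyadic grid of $[-n,n]$, extended with the boundary slopes outside. Each $F_n$ is again non-increasing, is globally $K_n$-Lipschitz in $x$ for some finite $K_n$, and, being a fixed continuous function of the finitely many values $F(\cdot,\cdot,q_i,z)=f(\cdot,\cdot,q_i,z)-u_\cdot q_i\in M_G^1$, satisfies $F_n(\cdot,\cdot,q,z)\in M_G^1$ for every fixed $q$. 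For such a Lipschitz dissipative coefficient the resolvent $J^\alpha_n$ is the fixed point of the under-relaxed map $R_n(x)=x-\frac1{1+\alpha K_n}\big(x-\alpha F_n(\omega,t,x,z)-y\big)$, whose Lipschitz constant in $x$ is $\frac{\alpha K_n}{1+\alpha K_n}<1$, uniformly in $(\omega,t)$. Starting the Picard iteration from the constant process $y$, every iterate stays in $M_G^1(0,T)$ by the standard fact that composing a coefficient which is Lipschitz in its state variable and $M_G^1$ for each fixed state with an $M_G^1$ process again yields an $M_G^1$ process; the uniform contraction then forces geometric, hence $M_G^1$, convergence of the iterates to $J^\alpha_n(\cdot,\cdot,y,z)\in M_G^1(0,T)$.

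To pass to the limit $n\to\infty$, write $x_n=J^\alpha_n$ and $x=J^\alpha$; subtracting the two resolvent equations and using that $F_n$ is non-increasing gives the non-expansive estimate $|x_n-x|\le\alpha\,|F_n(\omega,t,x,z)-F(\omega,t,x,z)|$. Since the interpolations converge to $F$ locally uniformly and $J^\alpha(\omega,t,y,z)$ is finite q.s., the right-hand side tends to $0$ q.s.; it is dominated by a fixed $M_G^1$ function built from $\Lambda$ and $u_\cdot|y|$, so $G$-dominated convergence yields $\|J^\alpha_n(\cdot,\cdot,y,z)-J^\alpha(\cdot,\cdot,y,z)\|_{M_G^1}\to0$. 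As $M_G^1(0,T)$ is complete and each $J^\alpha_n(\cdot,\cdot,y,z)$ lies in it, so does the limit $J^\alpha(\cdot,\cdot,y,z)$, and with the first paragraph this gives $f^\alpha(\cdot,\cdot,y,z)\in M_G^1(0,T)$.

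I expect the main obstacle to be exactly this membership step: $J^\alpha$ is defined implicitly and $F$ is only uniformly continuous (not Lipschitz) in $y$, so a direct Picard iteration on $x=y+\alpha F(\omega,t,x,z)$ need not converge; the device of replacing $f$ by genuinely Lipschitz-in-$y$ coefficients, whose resolvents are reachable inside $M_G^1$, and then transferring membership through the non-expansive estimate and $G$-dominated convergence is what circumvents it. One must check carefully that the dominated-convergence step is legitimate under $\hat{\mathbb{E}}$, for which the uniform-integrability/quasi-continuity characterization of $M_G^1$ (Denis–Hu–Peng) is the underlying tool. If that characterization is taken as available, membership can instead be argued in one stroke: the a priori bound gives integrability, and on each closed set where a quasi-continuous version of $f$ is continuous the unique root $J^\alpha$ depends continuously on $(\omega,t)$, so $J^\alpha$ is quasi-continuous and hence in $M_G^1(0,T)$.
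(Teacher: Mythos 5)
Your proposal takes a genuinely different route from the paper, and its main path contains one step that fails as literally written. The paper's proof is short and direct: it establishes membership in $M_G^1(0,T)$ via the characterization ``quasi-continuity in $(\omega,t)$ plus the truncated integrability condition''. Subtracting the resolvent equations at two sample points $\omega_1,\omega_2$ and using dissipativity yields
\[
\left|J^{\alpha}(\omega_1,t,y,z)-J^{\alpha}(\omega_2,t,y,z)\right|\le\alpha\left|F(\omega_1,t,J^{\alpha}(\omega_2,t,y,z),z)-F(\omega_2,t,J^{\alpha}(\omega_2,t,y,z),z)\right|,
\]
so the quasi-continuity of $F(\cdot,\cdot,y,z)$ (Theorem 4.7 in \cite{hu2016quasi}, available because $F(\cdot,\cdot,y,z)\in M_G^1$) transfers to $J^{\alpha}$ and then, by continuity of $F$ in $y$, to $F^{\alpha}$; the integrability condition is immediate from $|F^{\alpha}|\le|F|$ (Lemma \ref{Falpha}(ii)). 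This is exactly the ``one stroke'' argument you mention only in your final sentence; had you led with it, the piecewise-linear interpolation, the Picard iteration, and the limit in $n$ would all be unnecessary.

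On your main route, the step that would fail is the invocation of ``$G$-dominated convergence'' to pass from the q.s. bound $|J^{\alpha}_n-J^{\alpha}|\le\alpha|F_n(\cdot,\cdot,J^{\alpha},z)-F(\cdot,\cdot,J^{\alpha},z)|\to0$ to convergence in $\Vert\cdot\Vert_{M_G^1}$. Dominated convergence under $\hat{\mathbb{E}}$ is not valid for q.s.-convergent sequences in general (the capacity is not continuous from above on arbitrary sets); this is precisely why the paper, in the analogous step of Lemma \ref{f-falpha}, replaces it by a Chebyshev estimate $\hat{\mathbb{E}}[\int_0^T I_{B_\alpha}\,ds]\to0$ followed by the uniform-integrability result of Lemma 4.2 in \cite{hu2016stochastic}. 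Your step can be repaired in the same way: for $n>|y|$ one has $|F_n-F|\le\rho(\mathrm{mesh}_n)$ on $\{|J^{\alpha}|\le n\}$ with $\rho$ the modulus from (H1) (which must be read as uniform in $(\omega,t)$, as the paper does elsewhere --- this is also what makes your $K_n$ finite), $\hat{\mathbb{E}}[\int_0^T I_{\{|J^{\alpha}_s|>n\}}\,ds]\to0$ by Chebyshev, and on that exceptional set one should bound $|J^{\alpha}_n-J^{\alpha}|\le\alpha(|F_n(\cdot,\cdot,y,z)|+|F(\cdot,\cdot,y,z)|)$ via Lemma \ref{Falpha}(ii) applied to each resolvent, since your domination of $F_n$ outside $[-n,n]$ by the boundary-slope extension is not uniform in $n$. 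With these repairs (and granting the composition lemma you cite as ``standard'' for the Picard iterates), your argument closes, but it is substantially longer than the paper's.
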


\begin{proof}
	In view of the definition of $f^{\alpha}$, we only need to prove that $F^{\alpha}(\cdot,\cdot,y,z)\in M_{G}^{1}(0,T)$.
	
	First, we need to verify the quasi-continuity of $F^{\alpha}$. For each fixed $t,y,z$ and $\alpha$, we denote $m(\omega,t):=J^{\alpha}(\omega,t,y,z)$ for convenience. For $(\omega_{1},t_{1}),(\omega_{2},t_{2})\in\Omega\times [0,T]$, we have
	\begin{align*}
		&m(\omega_{1},t_{1})-\alpha F(\omega_{1},t_{1},m(\omega_{1},t_{1}),z)=y,\\
		&m(\omega_{2},t_{2})-\alpha F(\omega_{2},t_{2},m(\omega_{2},t_{2}),z)=y.
	\end{align*}
	Taking the difference between the above two equations, we get
	\[m(\omega_{1},t_{1})-m(\omega_{2},t_{2})-\alpha\left(F(\omega_{1},t_{1},m(\omega_{1},t_{1}),z)-F(\omega_{2},t_{2},m(\omega_{2},t_{2}),z) \right)=0.   \]
	It follows that 
	\begin{align*}
		&{}m(\omega_{1},t_{1})-m(\omega_{2},t_{2})-\alpha\left(F(\omega_{1},t_{1},m(\omega_{1},t_{1}),z)-F(\omega_{1},t_{1},m(\omega_{2},t_{2}),z) \right)\\
		={}&\alpha\left(F(\omega_{1},t_{1},m(\omega_{2},t_{2}),z)-F(\omega_{2},t_{2},m(\omega_{2},t_{2}),z) \right)
	\end{align*}
	Then multiply both sides of the equation by $m(\omega_{1},t_{1})-m(\omega_{2},t_{2})$, we yield
	\begin{align*}
		&\left(m(\omega_{1},t_{1})-m(\omega_{2},t_{2})\right) ^{2}-\alpha\left(F(\omega_{1},t_{1},m(\omega_{1},t_{1}),z)-F(\omega_{1},t_{1},m(\omega_{2},t_{2}),z) \right)\left( m(\omega_{1},t_{1})-m(\omega_{2},t_{2})\right)\\
		=&\alpha\left(F(\omega_{1},t_{1},m(\omega_{2},t_{2}),z)-F(\omega_{2},t_{2},m(\omega_{2},t_{2}),z) \right)\left( m(\omega_{1},t_{1})-m(\omega_{2},t_{2})\right).
	\end{align*}
	Notice that $-\alpha\left(F(\omega_{1},t_{1},m(\omega_{1},t_{1}),z)-F(\omega_{1},t_{1},m(\omega_{2},t_{2}),z) \right)\left( m(\omega_{1},t_{1})-m(\omega_{2},t_{2})\right)\geq0 $.
	Thus, we can obtain
	\[\left|m(\omega_{1},t_{1})-m(\omega_{2},t_{2}) \right|\leq\alpha\left| F(\omega_{1},t_{1},m(\omega_{2},t_{2}),z)-F(\omega_{2},t_{2},m(\omega_{2},t_{2}),z) \right|.  \]
	According to Theorem 4.7 in \cite{hu2016quasi}, we know that $F$ is quasi-continuous with respect to $(\omega,t)$ in the sense of capacity  $\hat{c}$. Then we yield $m$ is quasi-continuous with respect to $(\omega,t)$ in the sense of capacity $\hat{c}$. From (H1), we get $F$ is continuous in $y$. Thus,
	\begin{align*}
		&\left| F^{\alpha}(\omega_{1},t_{1},y,z)-F^{\alpha}(\omega_{2},t_{2},y,z)\right|\\ 
		\leq&\left| F(\omega_{1},t_{1},m(\omega_{1},t_{1}),z)-F(\omega_{2},t_{2},m(\omega_{2},t_{2}),z)\right|\\
		\leq&\left| F(\omega_{1},t_{1},m(\omega_{1},t_{1}),z)-F(\omega_{1},t_{1},m(\omega_{2},t_{2}),z)\right|+\left| F(\omega_{1},t_{1},m(\omega_{2},t_{2}),z)-F(\omega_{2},t_{2},m(\omega_{2},t_{2}),z)\right|.
	\end{align*}
	It follows that 
	\[\left| F^{\alpha}(\omega_{1},t_{1},y,z)-F^{\alpha}(\omega_{2},t_{2},y,z)\right|\rightarrow0 \text{  as  } (\omega_{1},t_{1})\rightarrow (\omega_{2},t_{2}).\]
	Therefore, $F^{\alpha}$ is quasi-continuous with respect to $(\omega,t)$ in the sense of capacity  $\hat{c}$.
	 
	Finally, we need to prove the integrability of $F^{\alpha}$. According to (ii) in Lemma \ref{Falpha}, it leads to 
	\begin{equation*}
		\hat{\mathbb{E}}\left[ \int_{0}^{T}\left\vert F^{\alpha }(\omega,s,y,z)\right\vert I_{\left\lbrace\left\vert F^{\alpha }(\omega,s,y,z)\right\vert\geq N \right\rbrace }ds\right]
		\leq \hat{\mathbb{E}}\left[ \int_{0}^{T}\left\vert F(\omega,s,y,z)\right\vert I_{\left\lbrace\left\vert F(\omega,s,y,z)\right\vert\geq N \right\rbrace }ds\right].
	\end{equation*}
	Since $F(\cdot,\cdot,y,z)\in M_{G}^{1}(0,T)$, it follows that $\lim\limits_{N \rightarrow \infty} \hat{\mathbb{E}}\left[ \int_{0}^{T}\left\vert F^{\alpha }(\omega,s,y,z)\right\vert I_{\left\lbrace\left\vert F^{\alpha }(\omega,s,y,z)\right\vert\geq N \right\rbrace }ds\right]=0$. This completes the proof.
\end{proof}

For the rest of this paper, we will omit the $\omega$ term for convenience.

\begin{lemma}\label{f-falpha}
	Let $f$ satisfy (H1)-(H4) and let $f^{\alpha }$ be defined as in Lemma \ref{falpha}. If $\left( Y,Z\right) \in S_{G}^{2}(0,T)\times H_{G}^{2}(0,T)$, then $f^{\alpha }(\cdot ,Y,Z),f(\cdot ,Y,Z)\in M_{G}^{2}(0,T)$ and 
	\begin{equation}
		\lim\limits_{\alpha \rightarrow 0}\hat{\mathbb{E}}\left[ \int_{0}^{T}\left\vert f(s,Y_{s},Z_{s})-f^{\alpha }(s,Y_{s},Z_{s})\right\vert ^{2}ds\right]=0.  \label{fMG2}
	\end{equation}
\end{lemma}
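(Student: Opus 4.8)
The plan is to prove the two assertions separately: first that both compositions belong to $M_G^2(0,T)$, and then the norm convergence (\ref{fMG2}). The backbone of both is an $\alpha$-uniform growth estimate for $f$ and $f^{\alpha}$ evaluated along $(Y,Z)$. Combining (H3) with the $z$-Lipschitz property (H4) gives $|f(s,Y_s,Z_s)| \le h(s) + u_s|Y_s| + L|Z_s|$, while Lemma \ref{falpha}(ii) and (v) yield $|f^{\alpha}(s,Y_s,Z_s)| \le h(s) + 3u_s|Y_s| + L|Z_s|$, with a bound independent of $\alpha$. Squaring, integrating, taking $\hat{\mathbb{E}}$, and using $\int_0^T u_s^2\,ds \le M$ together with $h \in M_G^{2+\lambda}(0,T) \subset M_G^2(0,T)$, $Y \in S_G^2(0,T)$, $Z \in H_G^2(0,T)$, one gets
\[
\hat{\mathbb{E}}\Big[\int_0^T |f^{\alpha}(s,Y_s,Z_s)|^2\,ds\Big] \le C\Big(\hat{\mathbb{E}}\Big[\int_0^T h(s)^2\,ds\Big] + M\,\hat{\mathbb{E}}\Big[\sup_{s}|Y_s|^2\Big] + L^2\,\hat{\mathbb{E}}\Big[\int_0^T |Z_s|^2\,ds\Big]\Big) < \infty,
\]
and the same for $f$; so both compositions have finite $M_G^2$-norm, uniformly in $\alpha$.

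To upgrade finiteness of the norm to genuine membership in $M_G^2(0,T)$, I would invoke quasi-continuity. Both $f$ and $f^{\alpha}$ are quasi-continuous in $\omega$ (by (H1) and the argument of Lemma \ref{falphamg1}) and continuous in $(y,z)$ (uniformly continuous in $y$ by (H1), $L$-Lipschitz in $z$ by (H4) and Lemma \ref{falpha}(v)). Composing with $(Y,Z) \in S_G^2(0,T)\times H_G^2(0,T)$ and approximating $(Y,Z)$ by simple processes in the respective norms then identifies the compositions as $M_G^2$-limits of simple-process compositions, exactly as in the classical $G$-BSDE setting; the finite-norm estimate above controls the error.

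For (\ref{fMG2}) I would first establish pointwise convergence: applying Lemma \ref{falpha}(iv) with the constant sequence $y^{\alpha} \equiv Y_s(\omega)$ shows $f^{\alpha}(s,Y_s,Z_s) \to f(s,Y_s,Z_s)$ as $\alpha \to 0$ for every $s$ and quasi-every $\omega$, hence $|f(s,Y_s,Z_s) - f^{\alpha}(s,Y_s,Z_s)|^2 \to 0$ q.s. The two growth bounds also supply the $\alpha$-uniform dominating process $G_s := (2h(s) + 4u_s|Y_s| + 2L|Z_s|)^2$, which lies in $M_G^1(0,T)$ by the estimates above.

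The main obstacle is that the ordinary dominated convergence theorem fails for the sublinear $\hat{\mathbb{E}}$, so q.s. pointwise convergence does not immediately yield $M_G^2$-convergence; I would route around this by a truncation splitting a bounded part from a uniformly integrable tail. Fixing a sequence $\alpha_n \downarrow 0$, I write for each $N>0$
\[
\hat{\mathbb{E}}\Big[\int_0^T |f - f^{\alpha_n}|^2\,ds\Big] \le \hat{\mathbb{E}}\Big[\int_0^T \big(|f - f^{\alpha_n}|^2 \wedge N\big)\,ds\Big] + \hat{\mathbb{E}}\Big[\int_0^T G_s\, I_{\{G_s > N\}}\,ds\Big].
\]
Since $G \in M_G^1(0,T)$, the tail term tends to $0$ as $N \to \infty$ uniformly in $n$ (the same uniform-integrability fact exploited in Lemma \ref{falphamg1}). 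For fixed $N$, the first integrand is bounded by $N$ and tends to $0$ q.s., so by the bounded convergence theorem for $\hat{\mathbb{E}}$ (valid thanks to the weak compactness of $\mathcal{P}$ in Theorem \ref{thm2.1}) the first term tends to $0$ as $n \to \infty$. Letting first $n \to \infty$ and then $N \to \infty$ gives convergence to $0$ along $\alpha_n$; as the sequence was arbitrary, (\ref{fMG2}) follows. The delicate point throughout is ensuring that the dominating quantities genuinely lie in $M_G^1(0,T)$ (so that uniform integrability over $\mathcal{P}$ is available), which is precisely where the integrability of $h$, $Y$ and $Z$ enters.
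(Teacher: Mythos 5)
Your growth estimates, the resulting $\alpha$-uniform bound on $\hat{\mathbb{E}}[\int_0^T|f^{\alpha}(s,Y_s,Z_s)|^2ds]$, and the reduction of $M_G^2$-membership to quasi-continuity plus a finite-norm bound are all consistent with the paper (which settles the membership issue by citing Lemma 3.2 of \cite{hu2020BSDEtimevary} and then runs the same norm estimate). The truncation into a part bounded by $N$ plus a tail $\hat{\mathbb{E}}[\int_0^T G_sI_{\{G_s>N\}}ds]$ controlled by $G\in M_G^1(0,T)$ is also sound, modulo actually verifying that $G$ belongs to $M_G^1(0,T)$ rather than merely having finite norm.

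The genuine gap is the final step: there is no bounded convergence theorem for $\hat{\mathbb{E}}$ under quasi-sure pointwise convergence. Weak compactness of $\mathcal{P}$ only yields downward monotone convergence for quasi-continuous sequences; for a general uniformly bounded sequence converging q.s., $\hat{\mathbb{E}}[X_n]=\sup_{P\in\mathcal{P}}E_P[X_n]$ need not converge to $\hat{\mathbb{E}}[X]$, because the near-maximizing measure can move with $n$ (a travelling-bump construction defeats the claim whenever $\mathcal{P}$ is rich, and the $G$-expectation family is rich enough; note also that q.s. convergence does not imply convergence in capacity $c=\sup_P P$). The paper's introduction flags exactly this obstruction: the pointwise convergence in Lemma \ref{falpha} (iv) does not give convergence in $M_G^2$. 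The repair must be quantitative rather than qualitative: from $|Y_s-J^{\alpha}(s,Y_s,Z_s)|=\alpha|F^{\alpha}(s,Y_s,Z_s)|\le\alpha|F(s,Y_s,Z_s)|$ (Lemma \ref{Falpha} (ii)) and the uniform continuity of $F$ in $y$ from (H1), one gets $|F(s,Y_s,Z_s)-F^{\alpha}(s,Y_s,Z_s)|<\epsilon$ off the set $B_{\alpha}=\{\alpha|F(s,Y_s,Z_s)|\ge\delta\}$, whose $dt\times\hat{\mathbb{E}}$-content is $O(\alpha^2/\delta^2)$ by Chebyshev; on $B_{\alpha}$ one bounds $|F-F^{\alpha}|^2\le 4|F|^2$ and invokes the uniform absolute continuity of integrals of $M_G^2$ elements over sets of small content (Lemma 4.2 of \cite{hu2016stochastic}). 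This is the paper's argument, and your proposal needs it (or an equivalent uniform-in-$\omega$ estimate) in place of the bounded convergence step.
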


\begin{proof}
	According to Lemma 3.2 in \cite{hu2020BSDEtimevary}, we know $f^{\alpha}(\cdot ,Y,Z)\in M_{G}^{1}(0,T)$. From Lemma \ref{falpha}, we get
	\begin{align*}
		&\hat{\mathbb{E}}\left[ \int_{0}^{T}\left\vert f^{\alpha
		}(s,Y_{s},Z_{s})\right\vert ^{2}ds\right]  \\
		\leq &\hat{\mathbb{E}}\left[ \int_{0}^{T}\left\vert h(s)+3u_{s}\left\vert Y_{s}\right\vert +L\left\vert Z_{s}\right\vert \right\vert ^{2}ds\right]  \\
		\leq &C_{L}\left( \hat{\mathbb{E}}\left[ \int_{0}^{T}\left\vert
		h(s)\right\vert ^{2}ds\right] +\hat{\mathbb{E}}\left[ \int_{0}^{T}u_{s}^{2}\left\vert Y_{s}\right\vert ^{2}ds\right] +\hat{\mathbb{E}}\left[\int_{0}^{T}\left\vert Z_{s}\right\vert ^{2}ds\right] \right)  \\
		\leq &C_{T,L,M}\left( \hat{\mathbb{E}}\left[ \int_{0}^{T}\left\vert h(s)\right\vert ^{2}ds\right] +\left\Vert Y\right\Vert_{S_{G}^{2}}^{2}+\left\Vert Z\right\Vert _{H_{G}^{2}}^{2}\right) .
	\end{align*}%
	Thus, $f^{\alpha }(\cdot ,Y,Z)\in M_{G}^{2}(0,T)$. Clearly, $F^{\alpha }(\cdot ,Y,Z)\in M_{G}^{2}(0,T)$. It is easy to check that  
	\begin{equation*}
		F(t,Y_{t},Z_{t})\leq h_{t}+2u_{t}|Y_{t}|+L|Z_{t}|.
	\end{equation*}
	Since $F$ is continuous with respect to $y$, this indicates that for any $\epsilon >0$, there exists $\delta $ that satisfies: if $\left\vert y-y^{\prime }\right\vert <\delta $, then $|F(t,y,z)-F(t,y^{\prime },z)|<\epsilon $. Let $A_{\alpha }=\left\{
	\left\vert Y_{s}-J^{\alpha }(s,Y_{s},Z_{s})\right\vert <\delta \right\} $. From the definition of $J^{\alpha }(s,Y_{s},Z_{s})$ and Lemma \ref{Falpha}, we yield 
	\[A_{\alpha }^{c}=\left\{ \alpha \left\vert F^{\alpha}(s,Y_{s},Z_{s})\right\vert \geq \delta \right\} \subset B_{\alpha }=\left\{\alpha \left( h_{s}+2u_{s}|Y_{s}|+L|Z_{s}| \right) \geq \delta \right\} .\]
	Clearly, 
	\begin{align*}
		\lim _{\alpha \rightarrow 0}\hat{\mathbb{E}}\left[ \int_{0}^{T}I_{B_{\alpha }}\mathrm{d}s\right] \leq{}& \lim _{\alpha \rightarrow 0}\alpha ^{2}\hat{\mathbb{E}}\left[ \int_{0}^{T}\frac{\left( h_{s}+2u_{s}|Y_{s}|+L|Z_{s}|\right) ^{2}}{\delta ^{2}}\mathrm{d}s\right] \\
		\leq{}&\lim _{\alpha \rightarrow 0}\frac{\alpha ^{2}}{\delta^{2}}C_{T,L,M}\left( \hat{\mathbb{E}}\left[ \int_{0}^{T}\left\vert h(s)\right\vert ^{2}\mathrm{d}s\right] +\left\Vert Y\right\Vert_{S_{G}^{2}}^{2}+\left\Vert Z\right\Vert _{H_{G}^{2}}^{2}\right)\\
		={}&0.
	\end{align*}
	By Lemma 4.2 in \cite{hu2016stochastic}, we know that 
	\begin{equation*}
		\lim _{\alpha \rightarrow 0}\hat{\mathbb{E}}\left[ \int_{0}^{T}\left( h_{s}+2u_{s}|Y_{s}|+L|Z_{s}| \right) ^{2}I_{B_{\alpha }}\mathrm{d}s\right] <\epsilon ,
	\end{equation*}
	where $\epsilon $ is the same as above. Due to Lemma \ref{Falpha},   
	\begin{align*}
		&{}\lim _{\alpha \rightarrow 0}\hat{\mathbb{E}}\left[
		\int_{0}^{T}\left\vert f(s,Y_{s},Z_{s})-f^{\alpha
		}(s,Y_{s},Z_{s})\right\vert ^{2}\mathrm{d}s\right]  \\
		\leq{} &\lim _{\alpha \rightarrow 0}\hat{\mathbb{E}}\left[
		\int_{0}^{T}\left\vert F(s,Y_{s},Z_{s})-F^{\alpha
		}(s,Y_{s},Z_{s})\right\vert ^{2}\mathrm{d}s\right]  \\
		\leq{} &\lim _{\alpha \rightarrow 0}\hat{\mathbb{E}}\left[
		\int_{0}^{T}\left\vert F(s,Y_{s},Z_{s})-F^{\alpha
		}(s,Y_{s},Z_{s})\right\vert ^{2}\left( I_{A_{\alpha }}+I_{A_{\alpha}^{c}}\right) \mathrm{d}s\right]  \\
		\leq{} &T\epsilon ^{2}+\lim _{\alpha \rightarrow 0}\hat{\mathbb{E}}\left[ \int_{0}^{T}\left\vert F(s,Y_{s},Z_{s})-F^{\alpha
		}(s,Y_{s},Z_{s})\right\vert ^{2}I_{B_{\alpha }}\mathrm{d}s\right]  \\
		\leq{} & T\epsilon ^{2}+4\lim _{\alpha \rightarrow 0}\hat{\mathbb{E}}\left[ \int_{0}^{T}\left\vert F(s,Y_{s},Z_{s})\right\vert ^{2}I_{B_{\alpha}}\mathrm{d}s\right]  \\
		\leq{} & T\epsilon ^{2}+4\lim _{\alpha \rightarrow 0}\hat{\mathbb{E}}\left[ \int_{0}^{T}\left( h_{s}+2u_{s}|Y_{s}|+L|Z_{s}| \right) ^{2}I_{B_{\alpha}}\mathrm{d}s\right]\\
		\leq{} &T\epsilon ^{2}+4\epsilon .
	\end{align*}%
	Let $\epsilon $ approach 0, we get (\ref{fMG2}). Therefore, $f(\cdot
	,Y,Z)\in M_{G}^{2}(0,T)$.
\end{proof}

\section{Existence and uniqueness of the solution}\label{s4}

For any fixed $\alpha>0$, we consider the $G$-BSDE with generator $f^{\alpha}$:
\begin{align}\label{GBSDEalpha}
	Y_{t}^{\alpha}=\xi+\int_{t}^{T}f^{\alpha}(s,Y_{s}^{\alpha},Z_{s}^{\alpha})ds-\int_{t}^{T}Z_{s}^{\alpha}dBs-(K_{T}^{\alpha}-K_{t}^{\alpha}),
\end{align}
where $f^{\alpha}$ is defined in Lemma \ref{falpha}. According to Theorem 3.3 in \cite{hu2020BSDEtimevary}, we know that when $\lambda>0$, if $\xi\in L_{G}^{2+\lambda}$ and $f$ satisfies (H1)-(H4), then $G$-BSDE (\ref{GBSDEalpha}) has a unique solution $(Y^{\alpha},Z^{\alpha},K^{\alpha})\in \mathfrak{S}_{G}^{2}(0,T)$. 

In the following, we give estimates with respect to higher orders of $(Y^{\alpha},Z^{\alpha},K^{\alpha})$.

\begin{lemma}\label{ZandK}
	For fixed $\alpha,\lambda>0$, assume that $\xi\in L_{G}^{2+\lambda}(\Omega_{T})$ and $f$ satisfies (H1)-(H4). Let $f^{\alpha}$ be defined as in Lemma \ref{falpha} and let $(Y^{\alpha},Z^{\alpha},K^{\alpha})$ be the unique solution of $G$-BSDE (\ref{GBSDEalpha}). Then for $0\leq\bar{\lambda}\leq \lambda $,
	\begin{align*}
		&\left\| Z^{\alpha}\right\|_{H_{G}^{2+\bar{\lambda}}}^{2+\bar{\lambda}}\leq C_{T,\bar{\lambda},L,M,\bar{\sigma},\underline{\sigma}}\left\lbrace \left\| Y^{\alpha}\right\|_{S_{G}^{2+\bar{\lambda}}}^{2+\bar{\lambda}}+ \left\| Y^{\alpha}\right\|_{S_{G}^{2+\bar{\lambda}}}^{\frac{2+\bar{\lambda}}{2}}\left\|\int_{0}^{T}h(s)ds \right\|_{L_{G}^{2+\bar{\lambda}}}^{\frac{2+\bar{\lambda}}{2}}  \right\rbrace ,\\
		&\left\|K_{T} \right\|_{L_{G}^{2+\bar{\lambda}}}^{2+\bar{\lambda}}\leq C_{T,\bar{\lambda},L,M,\bar{\sigma},\underline{\sigma}}\left\lbrace \left\| Y^{\alpha}\right\|_{S_{G}^{2+\bar{\lambda}}}^{2+\bar{\lambda}}+ \left\|\int_{0}^{T}h(s)ds \right\|_{L_{G}^{2+\bar{\lambda}}}^{2+\bar{\lambda}}  \right\rbrace .
	\end{align*}
\end{lemma}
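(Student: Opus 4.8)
The plan is to obtain both estimates simultaneously from the $G$-It\^o formula applied to $|Y^{\alpha}|^{2}$, viewing them as a coupled pair of inequalities that are then closed by repeated use of Young's inequality. Write $\beta:=2+\bar{\lambda}$ and abbreviate the norms $\|\cdot\|_{S_{G}^{\beta}},\|\cdot\|_{H_{G}^{\beta}},\|\cdot\|_{L_{G}^{\beta}}$ accordingly. First I would record the global growth bound obtained by combining (ii) and (v) of Lemma \ref{falpha}: for all $(s,y,z)$,
\[ |f^{\alpha}(s,y,z)|\leq |f^{\alpha}(s,y,0)|+|f^{\alpha}(s,y,z)-f^{\alpha}(s,y,0)|\leq h(s)+3u_{s}|y|+L|z|, \]
and note that $\int_{0}^{T}u_{s}\,ds\leq (TM)^{1/2}$ by Cauchy--Schwarz, so the non-linear looking term $u_{s}|y|$ is in fact dominated by the fixed constants $T,M$.

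\textbf{Z-estimate.} Applying the $G$-It\^o formula to $|Y_{t}^{\alpha}|^{2}$ on $[0,T]$ and using $\int_{0}^{T}|Z_{s}^{\alpha}|^{2}\,d\langle B\rangle_{s}\geq \underline{\sigma}^{2}\int_{0}^{T}|Z_{s}^{\alpha}|^{2}\,ds$ gives
\[ \underline{\sigma}^{2}\int_{0}^{T}|Z_{s}^{\alpha}|^{2}\,ds\leq |\xi|^{2}+2\int_{0}^{T}Y_{s}^{\alpha}f^{\alpha}(s,Y_{s}^{\alpha},Z_{s}^{\alpha})\,ds-2\int_{0}^{T}Y_{s}^{\alpha}Z_{s}^{\alpha}\,dB_{s}-2\int_{0}^{T}Y_{s}^{\alpha}\,dK_{s}^{\alpha}. \]
I would bound the driver integral by the growth estimate, use $-2\int_{0}^{T}Y_{s}^{\alpha}\,dK_{s}^{\alpha}\leq 2\sup_{s}|Y_{s}^{\alpha}|\,|K_{T}^{\alpha}|$ (valid since $K^{\alpha}$ is non-increasing with $K_{0}^{\alpha}=0$), then raise to the power $\beta/2$ and take $\hat{\mathbb{E}}$. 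The "good" contributions give $\|Y^{\alpha}\|_{S_{G}^{\beta}}^{\beta}$, a cross term $\|Y^{\alpha}\|_{S_{G}^{\beta}}^{\beta/2}\|\int_{0}^{T}h(s)\,ds\|_{L_{G}^{\beta}}^{\beta/2}$ (H\"older), and a term of the form $\|Y^{\alpha}\|_{S_{G}^{\beta}}^{\beta/2}\|K_{T}^{\alpha}\|_{L_{G}^{\beta}}^{\beta/2}$; the stochastic integral is controlled by the upper bound in (\ref{bdg}). Both the BDG contribution and the $|Y||Z|$ part of the driver reduce to $\hat{\mathbb{E}}[(\sup_{s}|Y_{s}^{\alpha}|^{2}\int_{0}^{T}|Z_{s}^{\alpha}|^{2}ds)^{\beta/4}]$, which by Young's inequality splits into an absorbable multiple $\varepsilon\|Z^{\alpha}\|_{H_{G}^{\beta}}^{\beta}$ plus $C_{\varepsilon}\|Y^{\alpha}\|_{S_{G}^{\beta}}^{\beta}$.

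\textbf{K-estimate and closing.} Evaluating the BSDE at $t=0$ yields $K_{T}^{\alpha}=\xi-Y_{0}^{\alpha}+\int_{0}^{T}f^{\alpha}\,ds-\int_{0}^{T}Z_{s}^{\alpha}\,dB_{s}$; bounding $|K_{T}^{\alpha}|$ by the growth estimate and (\ref{bdg}) and raising to power $\beta$ gives
\[ \|K_{T}^{\alpha}\|_{L_{G}^{\beta}}^{\beta}\leq C\Big(\|Y^{\alpha}\|_{S_{G}^{\beta}}^{\beta}+\big\|\textstyle\int_{0}^{T}h(s)\,ds\big\|_{L_{G}^{\beta}}^{\beta}+\|Z^{\alpha}\|_{H_{G}^{\beta}}^{\beta}\Big). \]
To close the system I would insert the square root of this bound (using subadditivity of $(\cdot)^{1/2}$) into the $K_{T}^{\alpha}$ cross term of the $Z$-inequality, apply Young once more to the resulting $\|Y^{\alpha}\|_{S_{G}^{\beta}}^{\beta/2}\|Z^{\alpha}\|_{H_{G}^{\beta}}^{\beta/2}$, and absorb all $\varepsilon\|Z^{\alpha}\|_{H_{G}^{\beta}}^{\beta}$ terms into the left-hand side, producing the first claimed inequality. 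Reinserting it into the $K$-inequality and using Young on $\|Y^{\alpha}\|_{S_{G}^{\beta}}^{\beta/2}\|\int_{0}^{T}h\,ds\|_{L_{G}^{\beta}}^{\beta/2}$ yields the second.

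\textbf{Main obstacle.} The delicate point is not the individual inequalities but the mutual coupling together with the a priori finiteness required to make the absorption step legitimate: absorbing $\varepsilon\|Z^{\alpha}\|_{H_{G}^{\beta}}^{\beta}$ into the left-hand side is only valid once we know $\|Z^{\alpha}\|_{H_{G}^{\beta}}^{\beta}$ and $\|K_{T}^{\alpha}\|_{L_{G}^{\beta}}^{\beta}$ are finite. Since $f^{\alpha}$ is globally Lipschitz in $y$ (constant $2/\alpha+u_{t}$, Lemma \ref{falpha}(i)) and in $z$, the existence result only furnishes the solution in $\mathfrak{S}_{G}^{2}(0,T)$; to run the higher-order argument I would first secure this finiteness by a stopping-time localization, e.g. stopping at $\tau_{n}=\inf\{t:\int_{0}^{t}|Z_{s}^{\alpha}|^{2}ds\geq n\}$, deriving the estimate on $[0,\tau_{n}]$ where all terms are bounded, and then letting $n\to\infty$ by monotone convergence under $\hat{\mathbb{E}}$. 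Beyond that, the only technical care needed is tracking the dependence of the constant on $T,\bar{\lambda},L,M,\bar{\sigma},\underline{\sigma}$ and checking that all applications of Young's inequality use conjugate exponents compatible with $\beta\geq 2$.
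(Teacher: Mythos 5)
Your proposal follows essentially the same route as the paper: It\^o's formula applied to $|Y^{\alpha}_{t}|^{2}$, the growth bound $|f^{\alpha}(s,y,z)|\leq h(s)+3u_{s}|y|+L|z|$ from Lemma \ref{falpha}, BDG for the stochastic integral, a coupled pair of $Z$- and $K$-inequalities, and closure by substituting the $K$-bound into the $Z$-bound and absorbing the $\epsilon\Vert Z^{\alpha}\Vert_{H_{G}^{2+\bar{\lambda}}}^{2+\bar{\lambda}}$ terms via Young's inequality. Your extra remark about securing a priori finiteness before the absorption step is a reasonable precaution that the paper does not address explicitly, but it does not change the argument.
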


\begin{proof}
	Using It\^{o}'s formula to $\left\vert Y_{t}^{\alpha}\right\vert^{2}$, it follows that
	\begin{equation*}
		\left\vert Y_{0}^{\alpha }\right\vert ^{2}+\int_{0}^{T}\left\vert
		Z_{s}^{\alpha }\right\vert ^{2}d\langle B\rangle_{s}=\xi^{2}+\int_{0}^{T}2Y_{s}^{\alpha }f^{\alpha }\left( s,Y_{s}^{\alpha},Z_{s}^{\alpha }\right) ds-\int_{0}^{T}2Y_{s}^{\alpha}Z_{s}^{\alpha}dB_{s}-\int_{0}^{T}2Y_{s}^{\alpha }dK_{s}^{\alpha }.
	\end{equation*}%
	Through Cr's inequality, we get
	\begin{align*}
		&\left( \int_{0}^{T}\left\vert Z_{s}^{\alpha }\right\vert ^{2}d\langle B\rangle _{s}\right) ^{\frac{2+\bar{\lambda} }{2}} \\
		\leq &C_{\bar{\lambda}}\left\{ \xi ^{2+\bar{\lambda} }+\left\vert \int_{0}^{T}2Y_{s}^{\alpha }f^{\alpha}\left( s,Y_{s}^{\alpha },Z_{s}^{\alpha }\right) ds\right\vert ^{\frac{2+\bar{\lambda} }{2}}+\left\vert \int_{0}^{T}2Y_{s}^{\alpha }Z_{s}^{\alpha}dB_{s}\right\vert ^{\frac{2+\bar{\lambda} }{2}}+\left\vert\int_{0}^{T}2Y_{s}^{\alpha }dK_{s}^{\alpha }\right\vert ^{\frac{2+\bar{\lambda} }{2}}\right\}  \\
		\leq &C_{\bar{\lambda} }\left\{ \xi ^{2+\bar{\lambda} }+\left\vert\int_{0}^{T}2Y_{s}^{\alpha }f^{\alpha }\left( s,Y_{s}^{\alpha},Z_{s}^{\alpha }\right) ds\right\vert ^{2+\bar{\lambda} }+\left\vert\int_{0}^{T}2Y_{s}^{\alpha }Z_{s}^{\alpha }dB_{s}\right\vert ^{2+\bar{\lambda}}+\sup_{t\in \lbrack 0,T]}\left\vert Y_{t}^{\alpha }\right\vert ^{\frac{2+\bar{\lambda} }{2}}\left\vert K_{T}^{\alpha }\right\vert ^{\frac{2+\bar{\lambda} }{2}}\right\} .
	\end{align*}
	By (i)-(ii) and (v) in Lemma \ref{falpha}, we obtain 
	\begin{align*}
		2Y_{s}^{\alpha }f^{\alpha }\left( s,Y_{s}^{\alpha },Z_{s}^{\alpha }\right) =&2Y_{s}^{\alpha }\left( f^{\alpha }\left( s,Y_{s}^{\alpha },Z_{s}^{\alpha}\right) -f^{\alpha }\left( s,0,Z_{s}^{\alpha }\right) +f^{\alpha }\left(s,0,Z_{s}^{\alpha }\right) -f^{\alpha }\left( s,0,0\right) +f^{\alpha}\left( s,0,0\right) \right)  \\
		\leq &2u_{s}\left\vert Y_{s}^{\alpha }\right\vert ^{2}+2L\left\vert Y_{s}^{\alpha }Z_{s}^{\alpha }\right\vert +2\left\vert Y_{s}^{\alpha}f^{\alpha }\left( s,0,0\right) \right\vert  \\
		\leq &2u_{s}\left\vert Y_{s}^{\alpha }\right\vert ^{2}+2L\left\vert Y_{s}^{\alpha }Z_{s}^{\alpha }\right\vert +2\left\vert Y_{s}^{\alpha}f\left( s,0,0\right) \right\vert .
	\end{align*}
	With H\"{o}lder's inequality and (H3), it follows that 
	\begin{align*}
		&\left\vert \int_{0}^{T}2Y_{s}^{\alpha }f^{\alpha }\left( s,Y_{s}^{\alpha},Z_{s}^{\alpha }\right) ds\right\vert ^{\frac{2+\bar{\lambda} }{2}} \\
		\leq&\left\vert \int_{0}^{T}2u_{s}\left\vert Y_{s}^{\alpha }\right\vert^{2}+2L\left\vert Y_{s}^{\alpha }Z_{s}^{\alpha }\right\vert +2\left\vert Y_{s}^{\alpha }f\left( s,0,0\right) \right\vert ds\right\vert ^{\frac{2+\bar{\lambda} }{2}} \\
		\leq &C_{\bar{\lambda},L }\left\{ \left\vert \int_{0}^{T}u_{s}\left\vert
		Y_{s}^{\alpha }\right\vert ^{2}ds\right\vert ^{\frac{2+\bar{\lambda} }{2}}+\left\vert \int_{0}^{T}\left\vert Y_{s}^{\alpha }Z_{s}^{\alpha}\right\vert ds\right\vert ^{\frac{2+\bar{\lambda} }{2}}+\left\vert\int_{0}^{T}\left\vert Y_{s}^{\alpha }\right\vert h\left( s\right)ds\right\vert ^{\frac{2+\bar{\lambda} }{2}}\right\}  \\
		\leq &C_{\bar{\lambda} ,L,M}\left\{ \sup_{t\in \lbrack 0,T]}\left\vert
		Y_{t}^{\alpha }\right\vert ^{2+\bar{\lambda} }+\sup_{t\in \lbrack 0,T]}\left\vert Y_{t}^{\alpha }\right\vert ^{\frac{2+\bar{\lambda} }{2}}\left\vert\int_{0}^{T}\left\vert Z_{s}^{\alpha }\right\vert ds\right\vert ^{\frac{2+\bar{\lambda} }{2}}+\sup_{t\in \lbrack 0,T]}\left\vert Y_{t}^{\alpha}\right\vert ^{\frac{2+\bar{\lambda} }{2}}\left\vert \int_{0}^{T}h\left( s\right)
		ds\right\vert ^{\frac{2+\bar{\lambda} }{2}}\right\}  \\
		\leq &C_{\bar{\lambda} ,L,M}\left\{ \left( 1+\frac{1}{\epsilon }\right) \sup_{t\in\lbrack 0,T]}\left\vert Y_{t}^{\alpha }\right\vert ^{2+\bar{\lambda} }+\epsilon\left\vert \int_{0}^{T}\left\vert Z_{s}^{\alpha }\right\vert ds\right\vert ^{2+\bar{\lambda} }+\sup_{t\in \lbrack 0,T]}\left\vert Y_{t}^{\alpha}\right\vert ^{\frac{2+\bar{\lambda} }{2}}\left\vert \int_{0}^{T}h\left( s\right)ds\right\vert ^{\frac{2+\bar{\lambda} }{2}}\right\}  \\
		\leq &C_{T,\bar{\lambda} ,L,M}\left\{ \left( 1+\frac{1}{\epsilon }\right) \sup_{t\in\lbrack 0,T]}\left\vert Y_{t}^{\alpha }\right\vert ^{2+\bar{\lambda} }+\epsilon\left\vert \int_{0}^{T}\left\vert Z_{s}^{\alpha }\right\vert^{2}ds\right\vert ^{\frac{2+\bar{\lambda} }{2}}+\sup_{t\in \lbrack 0,T]}\left\vert Y_{t}^{\alpha }\right\vert ^{\frac{2+\bar{\lambda} }{2}}\left\vert\int_{0}^{T}h\left( s\right) ds\right\vert ^{\frac{2+\bar{\lambda} }{2}}\right\} .
	\end{align*}%
	Applying BDG's inequality, we yield 
	\begin{align*}
		\hat{\mathbb{E}}\left[ \left\vert \int_{0}^{T}2Y_{s}^{\alpha }Z_{s}^{\alpha}dB_{s}\right\vert ^{\frac{2+\bar{\lambda} }{2}}\right]  \leq &C_{\bar{\lambda} }\hat{\mathbb{E}}\left[ \left\vert \int_{0}^{T}\left\vert Y_{s}^{\alpha}Z_{s}^{\alpha }\right\vert ^{2}d\langle B\rangle _{s}\right\vert^{\frac{2+\bar{\lambda} }{4}}\right]  \\
		\leq &C_{\bar{\lambda} ,\bar{\sigma}}\hat{\mathbb{E}}\left[ \sup_{t\in \lbrack0,T]}\left\vert Y_{t}^{\alpha }\right\vert ^{\frac{2+\bar{\lambda} }{2}}\left\vert\int_{0}^{T}\left\vert Z_{s}^{\alpha }\right\vert ^{2}ds\right\vert ^{\frac{2+\bar{\lambda} }{4}}\right]  \\
		\leq &C_{\bar{\lambda} ,\bar{\sigma}}\hat{\mathbb{E}}\left[ \frac{1}{\epsilon }\sup_{t\in \lbrack 0,T]}\left\vert Y_{t}^{\alpha }\right\vert ^{2+\bar{\lambda} }\right] +C_{\bar{\lambda} ,\bar{\sigma}}\hat{\mathbb{E}}\left[ \epsilon \left\vert\int_{0}^{T}\left\vert Z_{s}^{\alpha }\right\vert ^{2}ds\right\vert ^{\frac{2+\bar{\lambda} }{2}}\right] .
	\end{align*}
	So by simple calculation, 
	\begin{align}
		\hat{\mathbb{E}}\left[ \left( \int_{0}^{T}\left\vert Z_{s}^{\alpha}\right\vert ^{2}d\langle B\rangle _{s}\right) ^{\frac{2+\bar{\lambda} }{2}}\right] 
		\leq &C_{T,\bar{\lambda} ,L,M,\bar{\sigma}}\left\{ \left( 1+\frac{1}{\epsilon }\right) \left\Vert Y^{\alpha }\right\Vert _{S_{G}^{2+\bar{\lambda} }}^{2+\bar{\lambda}}+\epsilon \left\Vert Z^{\alpha }\right\Vert _{H_{G}^{2+\bar{\lambda}}}^{2+\bar{\lambda} }\right\}\notag \\
		&+C_{T,\bar{\lambda} ,L,M,\bar{\sigma}}\left\{\left\Vert Y^{\alpha }\right\Vert _{S_{G}^{2+\bar{\lambda} }}^{\frac{2+\bar{\lambda} }{2}}\left( \left\Vert \int_{0}^{T}h(s)ds\right\Vert_{L_{G}^{2+\bar{\lambda} }}^{\frac{2+\bar{\lambda} }{2}}+\left\Vert K_{T}^{\alpha}\right\Vert _{L_{G}^{2+\bar{\lambda} }}^{\frac{2+\bar{\lambda} }{2}}\right) \right\} .
		\label{Z}
	\end{align}
	
	Now let us estimate $K_{T}^{\alpha }$. Note that 
	\begin{equation*}
		K_{T}^{\alpha }=\xi -Y_{0}^{\alpha }-\int_{0}^{T}f^{\alpha }(s,Y_{s}^{\alpha},Z_{s}^{\alpha })ds-\int_{0}^{T}Z_{s}^{\alpha }dBs.
	\end{equation*}%
	Due to (ii) and (v) in Lemma \ref{falpha}, it leads to
	\begin{align*}
		\left\vert f^{\alpha }(s,Y_{s}^{\alpha },Z_{s}^{\alpha })\right\vert =&\left\vert f^{\alpha }(s,Y_{s}^{\alpha },Z_{s}^{\alpha })-f^{\alpha}(s,Y_{s}^{\alpha },0)+f^{\alpha }(s,Y_{s}^{\alpha },0)\right\vert  \\
		\leq &h(s)+3u_{s}\left\vert Y_{s}^{\alpha }\right\vert +L\left\vert Z_{s}^{\alpha }\right\vert .
	\end{align*}%
	By a similar approach to the above, we can obtain 
	\begin{align}
		\hat{\mathbb{E}}\left[ \left\vert K_{T}^{\alpha }\right\vert ^{2+\bar{\lambda} }\right]  \leq &C_{\bar{\lambda} }\left\{ \hat{\mathbb{E}}\left[ \left\vert \xi\right\vert ^{2+\bar{\lambda} }\right] +\hat{\mathbb{E}}\left[ \left\vert Y_{0}^{\alpha }\right\vert ^{2+\bar{\lambda} }\right] +\hat{\mathbb{E}}\left[\left\vert \int_{0}^{T}f^{\alpha }(s,Y_{s}^{\alpha },Z_{s}^{\alpha})ds\right\vert ^{2+\bar{\lambda} }\right] +\hat{\mathbb{E}}\left[ \left\vert\int_{0}^{T}Z_{s}^{\alpha }dBs\right\vert ^{2+\bar{\lambda} }\right] \right\}  \notag \\
		\leq &C_{\bar{\lambda} }\left\{ \left\Vert Y^{\alpha }\right\Vert
		_{S_{G}^{2+\bar{\lambda} }}^{2+\bar{\lambda} }+\hat{\mathbb{E}}\left[ \left\vert\int_{0}^{T}h(s)+u_{s}\left\vert Y_{s}^{\alpha }\right\vert+L\left\vert Z_{s}^{\alpha }\right\vert ds\right\vert ^{2+\bar{\lambda} }\right] +\hat{\mathbb{E}}\left[ \left\vert \int_{0}^{T}\left\vert Z_{s}^{\alpha }\right\vert ^{2}d\langle B\rangle s\right\vert ^{\frac{2+\bar{\lambda} }{2}}\right] \right\}   \notag \\
		\leq &C_{T,\bar{\lambda} ,L,M,\bar{\sigma}}\left\{ \left\Vert Y^{\alpha }\right\Vert_{S_{G}^{2+\bar{\lambda} }}^{2+\bar{\lambda} }+\left\Vert \int_{0}^{T}h(s)ds\right\Vert_{L_{G}^{2+\bar{\lambda} }}^{2+\bar{\lambda} }+\left\Vert Z^{\alpha }\right\Vert_{H_{G}^{2+\bar{\lambda} }}^{2+\bar{\lambda} }\right\} .  \label{K}
	\end{align}%
	Combining (\ref{Z}), it follows that 
	\begin{align*}
		&\hat{\mathbb{E}}\left[ \left( \int_{0}^{T}\left\vert Z_{s}^{\alpha}\right\vert ^{2}d\langle B\rangle _{s}\right) ^{\frac{2+\bar{\lambda} }{2}}\right]  \\
		\leq &C_{T,\bar{\lambda} ,L,M,\bar{\sigma}}\left\{ \left( 1+\frac{1}{\epsilon }\right) \left\Vert Y^{\alpha }\right\Vert _{S_{G}^{2+\bar{\lambda}
		}}^{2+\bar{\lambda} }+\epsilon \left\Vert Z^{\alpha }\right\Vert
		_{H_{G}^{2+\bar{\lambda} }}^{2+\bar{\lambda} }+\left\Vert Y^{\alpha }\right\Vert_{S_{G}^{2+\bar{\lambda} }}^{\frac{2+\bar{\lambda} }{2}}\left( \left\Vert\int_{0}^{T}h(s)ds\right\Vert _{L_{G}^{2+\bar{\lambda} }}^{\frac{2+\bar{\lambda} }{2}}+\left\Vert K_{T}^{\alpha }\right\Vert _{L_{G}^{2+\bar{\lambda} }}^{\frac{2+\bar{\lambda} }{2}}\right) \right\}  \\
		\leq &C_{T,\bar{\lambda} ,L,M,\bar{\sigma}}\left\{ \left( 1+\frac{1}{\epsilon }\right) \left\Vert Y^{\alpha }\right\Vert _{S_{G}^{2+\bar{\lambda} }}^{2+\bar{\lambda}}+\epsilon \left\Vert Z^{\alpha }\right\Vert _{H_{G}^{2+\bar{\lambda}}}^{2+\bar{\lambda} }+\left\Vert Y^{\alpha }\right\Vert _{S_{G}^{2+\bar{\lambda} }}^{\frac{2+\bar{\lambda} }{2}}\left( \left\Vert \int_{0}^{T}h(s)ds\right\Vert_{L_{G}^{2+\bar{\lambda} }}^{\frac{2+\bar{\lambda} }{2}}+\left\Vert Z^{\alpha}\right\Vert _{H_{G}^{2+\bar{\lambda} }}^{\frac{2+\bar{\lambda} }{2}}\right) \right\}  \\
		\leq &C_{T,\bar{\lambda} ,L,M,\bar{\sigma}}\left\{ \left( 1+\frac{1}{\epsilon }\right) \left\Vert Y^{\alpha }\right\Vert _{S_{G}^{2+\bar{\lambda} }}^{2+\bar{\lambda}}+\epsilon \left\Vert Z^{\alpha }\right\Vert _{H_{G}^{2+\bar{\lambda}}}^{2+\bar{\lambda} }+\left\Vert Y^{\alpha }\right\Vert _{S_{G}^{2+\bar{\lambda} }}^{\frac{2+\bar{\lambda} }{2}}\left\Vert \int_{0}^{T}h(s)ds\right\Vert_{L_{G}^{2+\bar{\lambda} }}^{\frac{2+\bar{\lambda} }{2}}\right\} .
	\end{align*}
	Let $\epsilon $ satisfy that $C_{T,\bar{\lambda},L,M,\bar{\sigma}}\epsilon =\frac{\underline{\sigma }^{2}}{2}$. Then 
	\begin{equation*}
		\frac{\underline{\sigma }^{2}}{2}  \hat{\mathbb{E}}\left[ \left( \int_{0}^{T}\left\vert Z_{s}^{\alpha}\right\vert ^{2}ds\right) ^{\frac{2+\bar{\lambda} }{2}}\right] \leq C_{T,\bar{\lambda},L,M,\bar{\sigma}}\left\{ \left( 1+\frac{1}{\epsilon }\right) \left\Vert Y^{\alpha }\right\Vert _{S_{G}^{2+\bar{\lambda} }}^{2+\bar{\lambda} }+\left\Vert Y^{\alpha }\right\Vert _{S_{G}^{2+\bar{\lambda} }}^{\frac{2+\bar{\lambda} }{2}}\left\Vert \int_{0}^{T}h(s)ds\right\Vert _{L_{G}^{2+\bar{\lambda} }}^{\frac{2+\bar{\lambda} }{2}}\right\} .
	\end{equation*}%
	It follows that 
	\begin{equation}
		\hat{\mathbb{E}}\left[ \left( \int_{0}^{T}\left\vert Z_{s}^{\alpha}\right\vert ^{2}ds\right) ^{\frac{2+\bar{\lambda} }{2}}\right] \leq C_{T,\bar{\lambda},L ,M,\bar{\sigma},\underline{\sigma }^{2}}\left\{ \left\Vert Y^{\alpha}\right\Vert _{S_{G}^{2+\bar{\lambda} }}^{2+\bar{\lambda} }+\left\Vert Y^{\alpha}\right\Vert _{S_{G}^{2+\bar{\lambda} }}^{\frac{2+\bar{\lambda} }{2}}\left\Vert\int_{0}^{T}h(s)ds\right\Vert _{L_{G}^{2+\bar{\lambda} }}^{\frac{2+\bar{\lambda} }{2}}\right\} .  \label{Z no K}
	\end{equation}
	Finally, combining (\ref{K}) with (\ref{Z no K}), we get the desired result.
\end{proof}

\begin{lemma}\label{Yalpha}
	For fixed $\alpha ,\lambda >0$, assume that $\xi \in L_{G}^{2+\lambda }$ and $f$ satisfies (H1)-(H4). Let $f^{\alpha }$ be defined as in Lemma \ref{falpha} and let $(Y^{\alpha },Z^{\alpha },K^{\alpha })$ be the unique solution of $G$-BSDE (\ref{GBSDEalpha}). Then
	\begin{equation*}
		\left\vert Y_{t}^{\alpha }\right\vert \leq C_{T,L,M,\underline{\sigma }}\left\{ \left( \hat{\mathbb{E}}_{t}\left[ \xi ^{2}\right] \right) ^{\frac{1}{2}}+\left( \hat{\mathbb{E}}_{t}\left[ \left( \int_{t}^{T}\left\vert h(s)\right\vert ^{2}ds\right) \right] \right) ^{\frac{1}{2}}\right\} .
	\end{equation*}
\end{lemma}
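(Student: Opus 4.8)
The plan is to run an It\^o / integrating-factor argument on $|Y^{\alpha}|^{2}$, apply the conditional $G$-expectation $\hat{\mathbb{E}}_{t}$, and reduce everything to the data $\xi$ and $h$. The essential discipline throughout is that every constant produced must be independent of $\alpha$, so at each step I will invoke the $\alpha$-free bounds (ii) and (v) of Lemma \ref{falpha} rather than the Lipschitz constant $\frac{2}{\alpha}+u_{t}$ coming from (i) of Lemma \ref{Falpha}; this is exactly why the monotonicity/growth structure, and not the mere Lipschitz property of $f^{\alpha}$, must be used.

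First I would fix the weight $\Gamma_{s}=\exp(\int_{0}^{s}\beta_{r}\,dr)$ with $\beta_{r}=6u_{r}+\tfrac{2L^{2}}{\underline{\sigma}^{2}}$, and note that since $\int_{0}^{T}u_{r}\,dr\le\sqrt{TM}$ by Cauchy--Schwarz, $\Gamma$ is bounded above and below by constants of the form $C_{T,L,M,\underline{\sigma}}$. Applying It\^o's formula to $\Gamma_{s}|Y_{s}^{\alpha}|^{2}$ on $[t,T]$ and using $2Y_{s}^{\alpha}f^{\alpha}(s,Y_{s}^{\alpha},Z_{s}^{\alpha})\le 6u_{s}|Y_{s}^{\alpha}|^{2}+2L|Y_{s}^{\alpha}||Z_{s}^{\alpha}|+2h(s)|Y_{s}^{\alpha}|$ (from (ii) and (v) of Lemma \ref{falpha}) together with $d\langle B\rangle_{s}\ge\underline{\sigma}^{2}\,ds$, the choice of $\beta$ absorbs the $6u_{s}|Y|^{2}$ term while Young's inequality turns $2L|Y||Z|$ into $\tfrac{\underline{\sigma}^{2}}{2}|Z|^{2}$, itself dominated by the left-hand $\int\Gamma|Z|^{2}\,d\langle B\rangle$. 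This should leave the key inequality
\[
\Gamma_{t}|Y_{t}^{\alpha}|^{2}+\tfrac{\underline{\sigma}^{2}}{2}\int_{t}^{T}\Gamma_{s}|Z_{s}^{\alpha}|^{2}\,ds\le \Gamma_{T}\xi^{2}+\int_{t}^{T}2\Gamma_{s}|Y_{s}^{\alpha}|h(s)\,ds-\int_{t}^{T}2\Gamma_{s}Y_{s}^{\alpha}Z_{s}^{\alpha}\,dB_{s}-\int_{t}^{T}2\Gamma_{s}Y_{s}^{\alpha}\,dK_{s}^{\alpha}.
\]
Taking $\hat{\mathbb{E}}_{t}$, the $dB$-integral is a symmetric $G$-martingale and drops out, the $Z$-integral on the left is nonnegative and is discarded, and $2\Gamma_{s}|Y_{s}^{\alpha}|h(s)$ is split by Cauchy--Schwarz/Young so as to produce exactly $\hat{\mathbb{E}}_{t}[\int_{t}^{T}h^{2}\,ds]$ (up to a $C_{T,L,M,\underline{\sigma}}$ factor).

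The main obstacle is $-\int_{t}^{T}2\Gamma_{s}Y_{s}^{\alpha}\,dK_{s}^{\alpha}$, which has no definite sign and is not a martingale. Here I would use that $K^{\alpha}$ is non-increasing to bound it by $2\sup_{s\in[t,T]}(\Gamma_{s}|Y_{s}^{\alpha}|)(K_{t}^{\alpha}-K_{T}^{\alpha})$, then control $K_{t}^{\alpha}-K_{T}^{\alpha}$ through the higher-moment estimate of Lemma \ref{ZandK} together with the representation $K_{T}^{\alpha}-K_{t}^{\alpha}=\xi-Y_{t}^{\alpha}-\int_{t}^{T}f^{\alpha}\,ds+\int_{t}^{T}Z^{\alpha}\,dB$ read off from (\ref{GBSDEalpha}); a Young split then peels off a small multiple of $\sup_{s\in[t,T]}|Y_{s}^{\alpha}|^{2}$ from the data, and the maximal-type estimate of Theorem \ref{EsupEt} lets me fold the surviving supremum back into $\hat{\mathbb{E}}_{t}[\xi^{2}]$ and $\hat{\mathbb{E}}_{t}[\int_{t}^{T}h^{2}\,ds]$. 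A cleaner alternative, which sidesteps the $K$-bookkeeping, is to observe that (ii) and (v) give the two-sided, $\alpha$-uniform domination $|f^{\alpha}(s,y,z)|\le h(s)+3u_{s}|y|+L|z|$, so that $|Y_{t}^{\alpha}|\le\bar{Y}_{t}$ by comparison, where $\bar{Y}$ solves the linear $G$-BSDE with terminal value $|\xi|$ and generator $3u_{s}|y|+L|z|+h(s)$; the explicit estimate for this linear equation (integrating factor $e^{3\int u}$, with the $L|z|$ term treated by the $G$-Girsanov/representation) gives $\bar{Y}_{t}\le C_{T,L,M,\underline{\sigma}}\,\hat{\mathbb{E}}_{t}[\,|\xi|+\int_{t}^{T}h\,ds\,]$, and a final Cauchy--Schwarz yields the stated form. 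In either route the decisive difficulty is extracting a clean \emph{conditional} estimate in the presence of the non-increasing $G$-martingale $K^{\alpha}$, and the decisive technical requirement is the $\alpha$-independence of every constant, which is secured precisely by using the monotonicity/growth bounds of Lemma \ref{falpha} in place of its $\alpha$-exploding Lipschitz constant.
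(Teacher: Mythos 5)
Your overall frame --- It\^o's formula applied to $e^{2\int_0^t\theta_r dr}|Y_t^{\alpha}|^2$ with a weight built from $u$ and $L^2/\underline{\sigma}^2$, the $\alpha$-free growth bound $2Y f^{\alpha}\le cu_s|Y|^2+2L|Y||Z|+2h|Y|$ from (ii) and (v) of Lemma \ref{falpha}, Young's inequality to dominate the $|Z|^2$ term by $\int |Z|^2 d\langle B\rangle_s$ via $d\langle B\rangle_s\ge\underline{\sigma}^2 ds$, and then a conditional expectation --- is exactly the paper's, and your insistence on $\alpha$-independent constants is the right discipline. The gap is in the one step you yourself flag as decisive: the term $-\int_t^T 2Y_s^{\alpha}e^{2\int_0^s\theta_r dr}dK_s^{\alpha}$. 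Neither of your two proposed treatments works. The first (bound by $2\sup|Y^{\alpha}|\,|K_T^{\alpha}-K_t^{\alpha}|$, Young, then Lemma \ref{ZandK}) is circular: Lemma \ref{ZandK} controls $\|K_T^{\alpha}\|_{L_G^{2}}$ only in terms of $\|Y^{\alpha}\|_{S_G^{2}}$ with a constant that is not small, so after the Young split the $\|Y^{\alpha}\|_{S_G^{2}}^2$ term reappearing through $C_{\epsilon}\|K_T^{\alpha}\|^2$ cannot be absorbed. Moreover it destroys the conditional structure: Theorem \ref{EsupEt} bounds $\hat{\mathbb{E}}[\sup_t\hat{\mathbb{E}}_t[\cdot]]$ by unconditional moments, not the reverse, so it cannot ``fold a supremum back into $\hat{\mathbb{E}}_t[\xi^2]$''; but the pointwise conditional form of the estimate is precisely what is used later in Theorem \ref{thm solution}. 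The second route (comparison with a linear $G$-BSDE plus $G$-Girsanov) imports a comparison theorem and a linear representation that are not established here and are themselves obstructed by the very same $K$ term.

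The missing idea is the standard $G$-BSDE device the paper uses: since $K^{\alpha}$ is non-increasing, $-2Y_s^{\alpha}dK_s^{\alpha}\le -2Y_s^{\alpha,+}dK_s^{\alpha}$, and by Lemma 3.4 of \cite{hu2014backward} the process
\begin{equation*}
N_t=\int_0^t 2Y_s^{\alpha}Z_s^{\alpha}e^{2\int_0^s\theta_r dr}dB_s+\int_0^t 2Y_s^{\alpha,+}e^{2\int_0^s\theta_r dr}dK_s^{\alpha}
\end{equation*}
is a $G$-martingale. This turns the It\^o inequality into $|Y_t^{\alpha}|^2e^{2\int_0^t\theta_r dr}+N_T-N_t\le \xi^2 e^{2\int_0^T\theta_r dr}+\int_t^T|h(s)|^2e^{2\int_0^s\theta_r dr}ds$, and applying $\hat{\mathbb{E}}_t$ kills $N_T-N_t$ exactly (translation invariance plus $\hat{\mathbb{E}}_t[N_T]=N_t$), yielding the conditional bound with no reference to $K_T^{\alpha}$ at all. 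Note also that under a sublinear expectation you cannot ``drop'' individual zero-mean terms from a sum one at a time; the $dB$- and $dK$-contributions must be packaged into the single $G$-martingale $N$ before conditioning. Without this step the proof does not close.
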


\begin{proof}
	Set $\theta _{s}:=u_{s}+\frac{1}{\underline{\sigma}^{2}}L^{2}+1$.
	Clearly, $\int_{0}^{T}\theta _{s}ds<\infty $. Applying It\^{o}'s formula to $\left\vert Y_{t}^{\alpha }e^{\int_{0}^{t}\theta _{s}ds}\right\vert ^{2}$, we get
	\begin{align}
		&\left\vert Y_{t}^{\alpha }\right\vert ^{2}e^{2\int_{0}^{t}\theta_{r}dr}+\int_{t}^{T}2\left\vert Y_{s}^{\alpha }\right\vert ^{2}\theta_{s}e^{2\int_{0}^{s}\theta _{r}dr}ds+\int_{t}^{T}\left\vert Z_{s}^{\alpha}\right\vert ^{2}e^{2\int_{0}^{s}\theta _{r}dr}d\langle B\rangle_{s}  \notag \\
		=&\xi^{2}e^{2\int_{0}^{T}\theta_{r}dr}+\int_{t}^{T}2Y_{s}^{\alpha}f^{\alpha }\left( s,Y_{s}^{\alpha },Z_{s}^{\alpha }\right)e^{2\int_{0}^{s}\theta_{r}dr}ds-\int_{t}^{T}2Y_{s}^{\alpha}Z_{s}^{\alpha}e^{2\int_{0}^{s}\theta_{r}dr}dB_{s}-\int_{t}^{T}2Y_{s}^{\alpha}e^{2\int_{0}^{s}\theta _{r}dr}dK_{s}^{\alpha }  \notag \\
		\leq&\xi^{2}e^{2\int_{0}^{T}\theta_{r}dr}+\int_{t}^{T}2Y_{s}^{\alpha}f^{\alpha }\left( s,Y_{s}^{\alpha },Z_{s}^{\alpha }\right)
		e^{2\int_{0}^{s}\theta _{r}dr}ds-\left( N_{T}-N_{t}\right) ,  \label{Y e}
	\end{align}%
	where $N_{t}=\int_{0}^{t}2Y_{s}^{\alpha}Z_{s}^{\alpha}e^{2\int_{0}^{s}\theta_{r}dr}dB_{s}+\int_{0}^{t}2Y_{s}^{\alpha,+}e^{2\int_{0}^{s}\theta _{r}dr}dK_{s}^{\alpha }$. From Lemma 3.4 in \cite{hu2014backward}, we konw that $N$ is a martingale. According to Lemma \ref{falpha} and $2ab\leq a^{2}+b^{2}$, it follows that 
	\begin{align*}
		&\int_{t}^{T}2Y_{s}^{\alpha }f^{\alpha }\left( s,Y_{s}^{\alpha
		},Z_{s}^{\alpha }\right) e^{2\int_{0}^{s}\theta _{r}dr}ds \\
		\leq &\int_{t}^{T}\left( 2u_{s}\left\vert Y_{s}^{\alpha }\right\vert^{2}+2L\left\vert Y_{s}^{\alpha }Z_{s}^{\alpha }\right\vert +2\left\vert Y_{s}^{\alpha }h(s)\right\vert \right) e^{2\int_{0}^{s}\theta _{r}dr}ds \\
		\leq &\int_{t}^{T}\left( 2u_{s}+\frac{1}{\underline{\sigma }^{2}}L^{2}+1\right) \left\vert Y_{s}^{\alpha }\right\vert
		^{2}e^{2\int_{0}^{s}\theta _{r}dr}ds+\int_{t}^{T}\left( \underline{\sigma }^{2}\left\vert Z_{s}^{\alpha }\right\vert ^{2}+\left\vert h(s)\right\vert^{2}\right)e^{2\int_{0}^{s}\theta _{r}dr}ds.
	\end{align*}%
	Then (\ref{Y e}) turns to 
	\begin{align*}
		&\left\vert Y_{t}^{\alpha }\right\vert^{2}e^{2\int_{0}^{t}\theta_{r}dr}+\int_{t}^{T}\left\vert Y_{s}^{\alpha }\right\vert ^{2}\left( 2\theta_{s}-2u_{s}-\frac{1}{\underline{\sigma}^{2}}L^{2}-1\right)e^{2\int_{0}^{s}\theta _{r}dr}ds+\int_{t}^{T}\left\vert Z_{s}^{\alpha}\right\vert ^{2}e^{2\int_{0}^{s}\theta _{r}dr}\left( d\langle B\rangle _{s}-\underline{\sigma }^{2}ds\right)  \\
		\leq&\xi^{2}e^{2\int_{0}^{T}\theta_{r}dr}+\int_{t}^{T}\left\vert
		h(s)\right\vert^{2}e^{2\int_{0}^{s}\theta_{r}dr}ds-\left(N_{T}-N_{t}\right) .
	\end{align*}%
	Thus, 
	\begin{equation*}
		\left\vert Y_{t}^{\alpha}\right\vert^{2}e^{2\int_{0}^{t}\theta_{r}dr}+N_{T}-N_{t}\leq\xi^{2}e^{2\int_{0}^{T}\theta_{r}dr}+\int_{t}^{T}\left\vert h(s)\right\vert ^{2}e^{2\int_{0}^{s}\theta_{r}dr}ds.
	\end{equation*}%
	Taking the conditional expectation on both sides of the inequality, it leads to 
	\begin{equation*}
		\left\vert Y_{t}^{\alpha }\right\vert ^{2}e^{2\int_{0}^{t}\theta _{r}dr}\leq
		e^{2\int_{0}^{T}\theta _{r}dr}\left( \hat{\mathbb{E}}_{t}\left[ \xi ^{2}%
		\right] +\hat{\mathbb{E}}_{t}\left[ \int_{t}^{T}\left\vert h(s)\right\vert
		^{2}ds\right] \right) .
	\end{equation*}%
	Recall that $1\leq e^{2\int_{0}^{T}\theta _{r}dr}\leq C_{T,L,M,\underline{\sigma }}<\infty $. Therefore, 
	\begin{equation*}
		\left\vert Y_{t}^{\alpha }\right\vert ^{2}\leq C_{T,L,M,\underline{\sigma }}\left( \hat{\mathbb{E}}_{t}\left[ \xi ^{2}\right] +\hat{\mathbb{E}}_{t}\left[ \int_{t}^{T}\left\vert h(s)\right\vert ^{2}ds\right] \right) .
	\end{equation*}%
	The proof is completed.
\end{proof}

\begin{corollary}
	Due to the above Lemma, it is simple to show that 
	\begin{equation*}
		\left\vert Y_{t}^{\alpha }\right\vert ^{2+\bar{\lambda} }\leq C_{T,\bar{\lambda},L,M,\underline{\sigma}}\left\{\hat{\mathbb{E}}_{t}\left[ \xi ^{2+\bar{\lambda} }\right] +\hat{\mathbb{E}}_{t}\left[ \left( \int_{t}^{T}\left\vert h(s)\right\vert ^{2+\bar{\lambda} }ds\right) \right] \right\},
	\end{equation*}
	where $0\leq\bar{\lambda}\leq\lambda$.
\end{corollary}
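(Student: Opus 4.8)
The plan is to derive the estimate directly from Lemma~\ref{Yalpha} by raising its right-hand side to the power $p:=\frac{2+\bar{\lambda}}{2}\ge 1$ and then transporting this exponent \emph{inside} the conditional $G$-expectation. Starting from the bound $|Y_t^{\alpha}|^2\le C_{T,L,M,\underline{\sigma}}\big(\hat{\mathbb{E}}_t[\xi^2]+\hat{\mathbb{E}}_t[\int_t^T|h(s)|^2 ds]\big)$ supplied by Lemma~\ref{Yalpha}, I would raise both sides to the power $p$ and apply the elementary $C_r$-inequality $(a+b)^p\le 2^{p-1}(a^p+b^p)$ to separate the two contributions, obtaining a bound in terms of $(\hat{\mathbb{E}}_t[\xi^2])^p$ and $(\hat{\mathbb{E}}_t[\int_t^T|h(s)|^2 ds])^p$. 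No fresh stochastic calculus is required; the whole argument is a deterministic manipulation of an already established pointwise inequality, which is precisely why the statement is phrased as a corollary.

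Next I would push the power $p$ through each conditional expectation. Since $x\mapsto x^p$ is convex on $[0,\infty)$ and $\hat{\mathbb{E}}_t$ obeys the conditional Jensen inequality for convex functions, one gets $(\hat{\mathbb{E}}_t[\xi^2])^p\le\hat{\mathbb{E}}_t[\xi^{2p}]=\hat{\mathbb{E}}_t[\xi^{2+\bar{\lambda}}]$ and likewise $(\hat{\mathbb{E}}_t[\int_t^T|h(s)|^2 ds])^p\le\hat{\mathbb{E}}_t[(\int_t^T|h(s)|^2 ds)^p]$. For the latter term I would then invoke Hölder's inequality in the time variable, $\int_t^T|h(s)|^2 ds\le (T-t)^{1-1/p}(\int_t^T|h(s)|^{2p}ds)^{1/p}$, which upon raising to the power $p$ gives $(\int_t^T|h(s)|^2 ds)^p\le T^{p-1}\int_t^T|h(s)|^{2+\bar{\lambda}}ds$. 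Combining these bounds with the monotonicity of $\hat{\mathbb{E}}_t$ and absorbing the numerical factor $2^{p-1}T^{p-1}C_{T,L,M,\underline{\sigma}}^{\,p}$ into a single constant $C_{T,\bar{\lambda},L,M,\underline{\sigma}}$ yields exactly the claimed inequality.

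The only point requiring genuine care is the direction of the conditional Jensen inequality for the sublinear expectation $\hat{\mathbb{E}}_t$: it is valid precisely because $x\mapsto x^p$ is convex and the arguments $\xi^2$ and $\int_t^T|h(s)|^2 ds$ are nonnegative, so the inequality runs as $\varphi(\hat{\mathbb{E}}_t[X])\le\hat{\mathbb{E}}_t[\varphi(X)]$, which is the orientation we need. Integrability is automatic here, since $\xi\in L_G^{2+\lambda}$ and $h\in M_G^{2+\lambda}(0,T)$ together with $2+\bar{\lambda}\le 2+\lambda$ guarantee that every expectation appearing above is finite. I expect no serious obstacle beyond bookkeeping the constants and confirming this Jensen step.
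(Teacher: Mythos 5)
Your proposal is correct and is precisely the argument the paper intends (the paper gives no written proof, saying only that the corollary follows ``due to the above Lemma''): raise the $p=2$ estimate of Lemma \ref{Yalpha} to the power $p=\frac{2+\bar{\lambda}}{2}$, apply the $C_r$-inequality, push the power inside $\hat{\mathbb{E}}_t$ via conditional Jensen (valid here since $x\mapsto x^{p}$ is convex and nondecreasing on $[0,\infty)$ and the arguments are nonnegative), and use H\"older in time on the $h$-term. All steps are sound and the integrability requirements are met by $\xi\in L_G^{2+\lambda}$ and $h\in M_G^{2+\lambda}(0,T)$ with $\bar{\lambda}\leq\lambda$.
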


It is worth noting that through Lemmas \ref{ZandK} and \ref{Yalpha}, we
obtain that the estimates of $(Y^{\alpha },Z^{\alpha },K^{\alpha })$ are
independent of $\alpha $. The following two lemmas play a crucial role in our approach.

\begin{lemma}\label{Zcha}
	For fixed $\alpha ,\beta ,\lambda >0$, assume that $\xi \in L_{G}^{2+\lambda
	}$ and $f$ satisfies (H1)-(H4). Let $f^{\alpha },f^{\beta }$ be defined as
	in Lemma \ref{falpha}. Let the corresponding solutions of the generators $%
	f^{\alpha }$ and $f^{\beta }$ of $G$-BSDE (\ref{GBSDEalpha}) be $(Y^{\alpha
	},Z^{\alpha },K^{\alpha })$ and $(Y^{\beta },Z^{\beta },K^{\beta })$,
	respectively. Define $\hat{Y}^{\alpha ,\beta }=Y^{\alpha }-Y^{\beta },\hat{Z}%
	^{\alpha ,\beta }=Z^{\alpha }-Z^{\beta }$. Then 
	\begin{equation*}
		\Vert \hat{Z}^{\alpha ,\beta }\Vert _{H_{G}^{2}}^{2}\leq C_{T,L,M,\bar{\sigma},\underline{\sigma }}\left\{ \Vert \hat{Y}^{\alpha ,\beta}\Vert _{S_{G}^{2}}^{2}+\Vert \hat{Y}^{\alpha ,\beta }\Vert
		_{S_{G}^{2}}\left(\left\Vert\int_{0}^{T}h(s)ds\right\Vert_{L_{G}^{2}}+\left\Vert Y^{\alpha }\right\Vert _{S_{G}^{2}}+\left\Vert Y^{\beta }\right\Vert _{S_{G}^{2}}\right) \right\} .
	\end{equation*}
\end{lemma}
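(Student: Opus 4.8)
The plan is to apply the $G$-It\^o formula to $|\hat Y^{\alpha,\beta}_t|^2$ and read off the bound for $\hat Z^{\alpha,\beta}$ from the resulting identity, exactly as in the a priori estimates of Lemmas \ref{ZandK}--\ref{Yalpha}. Writing $\hat K^{\alpha,\beta}=K^\alpha-K^\beta$, the triple $(\hat Y^{\alpha,\beta},\hat Z^{\alpha,\beta},\hat K^{\alpha,\beta})$ solves the $G$-BSDE with terminal value $0$ and driver $f^\alpha(s,Y^\alpha_s,Z^\alpha_s)-f^\beta(s,Y^\beta_s,Z^\beta_s)$. Applying It\^o's formula on $[0,T]$, using $\hat Y^{\alpha,\beta}_T=0$, discarding the nonnegative term $|\hat Y^{\alpha,\beta}_0|^2$, and using $\int_0^T|\hat Z^{\alpha,\beta}_s|^2 d\langle B\rangle_s\ge \underline{\sigma}^2\int_0^T|\hat Z^{\alpha,\beta}_s|^2 ds$, I would obtain
\begin{align*}
\underline{\sigma}^2\int_0^T|\hat Z^{\alpha,\beta}_s|^2 ds \le \int_0^T 2\hat Y^{\alpha,\beta}_s\big(f^\alpha(s,Y^\alpha_s,Z^\alpha_s)-f^\beta(s,Y^\beta_s,Z^\beta_s)\big)ds -\int_0^T 2\hat Y^{\alpha,\beta}_s\hat Z^{\alpha,\beta}_s dB_s -\int_0^T 2\hat Y^{\alpha,\beta}_s d\hat K^{\alpha,\beta}_s.
\end{align*}
Taking $\hat{\mathbb{E}}$ and noting that the symmetric $G$-martingale $\int_0^\cdot 2\hat Y^{\alpha,\beta}_s\hat Z^{\alpha,\beta}_s dB_s$ has vanishing $\hat{\mathbb{E}}$ (since $\hat Y^{\alpha,\beta}\in S_G^2$ and $\hat Z^{\alpha,\beta}\in H_G^2$ force $2\hat Y^{\alpha,\beta}\hat Z^{\alpha,\beta}\in H_G^1$), it remains to bound the driver term and the $d\hat K^{\alpha,\beta}$ term.

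For the driver term I would insert the intermediate value $f^\alpha(s,Y^\alpha_s,Z^\beta_s)$ and split
\begin{align*}
f^\alpha(s,Y^\alpha_s,Z^\alpha_s)-f^\beta(s,Y^\beta_s,Z^\beta_s) = \big(f^\alpha(s,Y^\alpha_s,Z^\alpha_s)-f^\alpha(s,Y^\alpha_s,Z^\beta_s)\big) + \big(f^\alpha(s,Y^\alpha_s,Z^\beta_s)-f^\beta(s,Y^\beta_s,Z^\beta_s)\big).
\end{align*}
The first bracket is $z$-Lipschitz by (v) of Lemma \ref{falpha}, so $2\hat Y^{\alpha,\beta}_s$ times it is at most $2L|\hat Y^{\alpha,\beta}_s||\hat Z^{\alpha,\beta}_s|\le \tfrac{L^2}{\epsilon}|\hat Y^{\alpha,\beta}_s|^2+\epsilon|\hat Z^{\alpha,\beta}_s|^2$, whose $\epsilon|\hat Z^{\alpha,\beta}_s|^2$ part is absorbed into the left-hand side by choosing $\epsilon<\underline{\sigma}^2$. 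The second bracket, multiplied by $\hat Y^{\alpha,\beta}_s=Y^\alpha_s-Y^\beta_s$, is precisely the configuration of (iii) of Lemma \ref{falpha}: it produces $u_s|\hat Y^{\alpha,\beta}_s|^2$ together with the factor $(\alpha+\beta)\big(|f^\alpha(s,Y^\alpha_s,Z^\beta_s)|+|f^\beta(s,Y^\beta_s,Z^\beta_s)|+u_s(|Y^\alpha_s|+|Y^\beta_s|)\big)^2$. Using the growth bound (ii) and (v) of Lemma \ref{falpha}, this square is controlled by $h(s)^2$, $u_s^2(|Y^\alpha_s|^2+|Y^\beta_s|^2)$ and $|Z^\beta_s|^2$, whose $\hat{\mathbb{E}}$-integrals are bounded \emph{uniformly in $\alpha,\beta$} by Lemmas \ref{ZandK} and \ref{Yalpha} and (H3). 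Since moreover $\int_0^T u_s|\hat Y^{\alpha,\beta}_s|^2 ds\le \big(\int_0^T u_s\,ds\big)\sup_{t}|\hat Y^{\alpha,\beta}_t|^2$ with $\int_0^T u_s\,ds\le\sqrt{TM}$, the driver contributes a multiple of $\|\hat Y^{\alpha,\beta}\|_{S_G^2}^2$.

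The $d\hat K^{\alpha,\beta}$ term is the genuinely nonstandard piece, since $\hat K^{\alpha,\beta}$ is a difference of two non-increasing $G$-martingales and hence has no sign. Writing $d\hat K^{\alpha,\beta}=dK^\alpha-dK^\beta$ and using that $-K^\alpha,-K^\beta$ are nondecreasing from $0$, I would bound
\begin{align*}
-\int_0^T 2\hat Y^{\alpha,\beta}_s d\hat K^{\alpha,\beta}_s \le 2\sup_{t\in[0,T]}|\hat Y^{\alpha,\beta}_t|\,\big(|K^\alpha_T|+|K^\beta_T|\big).
\end{align*}
Taking $\hat{\mathbb{E}}$, applying the Cauchy--Schwarz inequality for $\hat{\mathbb{E}}$, and invoking Lemma \ref{ZandK} with $\bar\lambda=0$ (so that $\|K^\alpha_T\|_{L_G^2}\le C(\|Y^\alpha\|_{S_G^2}+\|\int_0^T h(s)ds\|_{L_G^2})$, and similarly for $\beta$), this term is dominated by
\begin{align*}
C_{T,L,M,\bar{\sigma},\underline{\sigma}}\,\|\hat Y^{\alpha,\beta}\|_{S_G^2}\Big(\big\|\textstyle\int_0^T h(s)ds\big\|_{L_G^2}+\|Y^\alpha\|_{S_G^2}+\|Y^\beta\|_{S_G^2}\Big),
\end{align*}
which is exactly the second group of terms in the claimed estimate.

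Collecting the three contributions and absorbing the $\epsilon|\hat Z^{\alpha,\beta}_s|^2$ term yields the assertion. The main obstacle is twofold: first, the sign-indefinite $d\hat K^{\alpha,\beta}$ term, whose control hinges on the uniform-in-$\alpha$ higher-moment estimate of Lemma \ref{ZandK} for $K_T$; and second, ensuring that the $f^\alpha$-versus-$f^\beta$ discrepancy, handled via (iii) of Lemma \ref{falpha}, is genuinely negligible — the delicate point being that its $(\alpha+\beta)$-weighted remainder involves $|Z^\beta|^2$, so its harmlessness relies crucially on the uniform bound for $\|Z^\beta\|_{H_G^2}$ furnished by Lemma \ref{ZandK}.
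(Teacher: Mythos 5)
Your overall architecture (It\^{o}'s formula for $|\hat Y^{\alpha,\beta}|^2$, absorbing $\epsilon|\hat Z^{\alpha,\beta}|^2$ into the left side, bounding $-\int_0^T 2\hat Y_s^{\alpha,\beta}\,d\hat K_s^{\alpha,\beta}$ by $2\sup_t|\hat Y_t^{\alpha,\beta}|(|K_T^\alpha|+|K_T^\beta|)$ and invoking Lemma \ref{ZandK}) coincides with the paper's. The gap is in the driver term: by inserting only $f^\alpha(s,Y_s^\alpha,Z_s^\beta)$ and applying Lemma \ref{falpha}(iii) to the remaining bracket, you generate the additive remainder
\begin{equation*}
(\alpha+\beta)\hat{\mathbb{E}}\Bigl[\int_0^T\bigl(|f^\alpha(s,Y_s^\alpha,Z_s^\beta)|+|f^\beta(s,Y_s^\beta,Z_s^\beta)|+u_s(|Y_s^\alpha|+|Y_s^\beta|)\bigr)^2 ds\Bigr],
\end{equation*}
which carries \emph{no} factor of $\hat Y^{\alpha,\beta}$. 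You call it harmless because it is uniformly bounded, but the lemma's right-hand side vanishes when $\Vert\hat Y^{\alpha,\beta}\Vert_{S_G^2}=0$, while your bound retains a term of order $\alpha+\beta$; so what you prove is the strictly weaker estimate $\Vert\hat Z^{\alpha,\beta}\Vert_{H_G^2}^2\le C\{\cdots\}+(\alpha+\beta)C'$, not the stated inequality. (That weaker estimate would still suffice for Step 1 of Theorem \ref{thm solution}, but it does not suffice for the uniqueness argument in Step 4, which reuses this lemma's form to conclude $\hat Z=0$ from $\hat Y=0$.)

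The paper avoids this by a three-term split that keeps a factor of $\hat Y_s^{\alpha,\beta}$ on every piece: it writes the driver difference as $\bigl(f^\alpha(s,Y_s^\alpha,Z_s^\alpha)-f^\alpha(s,Y_s^\beta,Z_s^\alpha)\bigr)+\bigl(f^\alpha(s,Y_s^\beta,Z_s^\alpha)-f^\alpha(s,Y_s^\beta,Z_s^\beta)\bigr)+\bigl(f^\alpha(s,Y_s^\beta,Z_s^\beta)-f^\beta(s,Y_s^\beta,Z_s^\beta)\bigr)$. The first piece is handled by the monotonicity of $f^\alpha$ in $y$ (Lemma \ref{falpha}(i)), the second by the $z$-Lipschitz property, and the third — the $\alpha$-versus-$\beta$ discrepancy — is evaluated at the \emph{common} point $(Y_s^\beta,Z_s^\beta)$, where the pointwise growth bound of Lemma \ref{falpha}(ii),(v) gives $|f^\alpha-f^\beta|\le 2h(s)+6u_s|Y_s^\beta|+2L|Z_s^\beta|$; multiplying by $2\hat Y_s^{\alpha,\beta}$ then yields exactly the $\Vert\hat Y^{\alpha,\beta}\Vert_{S_G^2}(\Vert\int_0^T h(s)ds\Vert_{L_G^2}+\Vert Y^\beta\Vert_{S_G^2}+\Vert Z^\beta\Vert_{H_G^2})$ contribution, with $\Vert Z^\beta\Vert_{H_G^2}$ subsequently eliminated via Lemma \ref{ZandK}. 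Replacing your two-term split by this decomposition repairs the proof; property (iii) of Lemma \ref{falpha} is reserved in the paper for the estimate of $\hat Y^{\alpha,\beta}$ itself (Lemma \ref{Ycha}), where an $(\alpha+\beta)$-remainder is precisely what one wants.
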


\begin{proof}
	We shall adopt the procedure as in the proof of Lemma \ref{ZandK}. Set $\hat{%
		K}^{\alpha ,\beta }=K^{\alpha }-K^{\beta }$. Applying It\^{o}'s formula to $%
	\vert \hat{Y}^{\alpha ,\beta }\vert ^{2}$, we get 
	\begin{align*}
		&\left\vert \hat{Y}_{T}^{\alpha ,\beta}\right\vert^{2}-\left\vert \hat{Y}_{0}^{\alpha,\beta}\right\vert^{2}+\int_{0}^{T}2\hat{Y}_{s}^{\alpha,\beta }\left( f^{\alpha }(s,Y_{s}^{\alpha },Z_{s}^{\alpha })-f^{\beta}(s,Y_{s}^{\beta },Z_{s}^{\beta })\right) ds\\
		=&\int_{0}^{T}2\hat{Y}_{s}^{\alpha ,\beta }\hat{Z}_{s}^{\alpha ,\beta }dB_{s}+\int_{0}^{T}2\hat{Y}_{s}^{\alpha ,\beta }d\hat{K}_{s}^{\alpha ,\beta }+\int_{0}^{T}\left\vert 
		\hat{Z}_{s}^{\alpha ,\beta }\right\vert ^{2}d\langle B\rangle s.
	\end{align*}%
	It turns to 
	\begin{equation*}
		\underline{\sigma}^{2}\int_{0}^{T}\left\vert\hat{Z}_{s}^{\alpha,\beta}\right\vert ^{2}ds\leq \int_{0}^{T}2\hat{Y}_{s}^{\alpha ,\beta }\left(f^{\alpha }(s,Y_{s}^{\alpha },Z_{s}^{\alpha })-f^{\beta }(s,Y_{s}^{\beta},Z_{s}^{\beta })\right) ds-\int_{0}^{T}2\hat{Y}_{s}^{\alpha ,\beta }\hat{Z}_{s}^{\alpha ,\beta}dB_{s}-\int_{0}^{T}2\hat{Y}_{s}^{\alpha,\beta}d\hat{K}_{s}^{\alpha ,\beta }.
	\end{equation*}%
	According to Lemma \ref{falpha}, we yield%
	\begin{align*}
		&\hat{Y}_{s}^{\alpha ,\beta }\left( f^{\alpha }(s,Y_{s}^{\alpha
		},Z_{s}^{\alpha })-f^{\beta }(s,Y_{s}^{\beta },Z_{s}^{\beta })\right)  \\
		\leq &\hat{Y}_{s}^{\alpha,\beta}\left(f^{\alpha}(s,Y_{s}^{\alpha
		},Z_{s}^{\alpha })-f^{\alpha }(s,Y_{s}^{\beta },Z_{s}^{\alpha })+f^{\alpha}(s,Y_{s}^{\beta },Z_{s}^{\alpha })-f^{\alpha }(s,Y_{s}^{\beta},Z_{s}^{\beta })+f^{\alpha }(s,Y_{s}^{\beta },Z_{s}^{\beta })-f^{\beta}(s,Y_{s}^{\beta },Z_{s}^{\beta })\right)  \\
		\leq &u_{s}\left\vert \hat{Y}_{s}^{\alpha ,\beta }\right\vert^{2}+L\left\vert \hat{Y}_{s}^{\alpha ,\beta }\hat{Z}_{s}^{\alpha ,\beta}\right\vert +\hat{Y}_{s}^{\alpha ,\beta }\left( f^{\alpha }(s,Y_{s}^{\beta},Z_{s}^{\beta })-f^{\beta }(s,Y_{s}^{\beta },Z_{s}^{\beta })\right)  \\
		\leq &u_{s}\left\vert \hat{Y}_{s}^{\alpha ,\beta }\right\vert^{2}+L\left\vert \hat{Y}_{s}^{\alpha ,\beta }\hat{Z}_{s}^{\alpha ,\beta}\right\vert +\hat{Y}_{s}^{\alpha ,\beta }\left( 6u_{s}\left\vert Y_{s}^{\beta }\right\vert +2L\left\vert Z_{s}^{\beta }\right\vert+2h(s)\right) .
	\end{align*}%
	Then, through simple calculation, it follows that%
	\begin{align*}
		&\hat{\mathbb{E}}\left[ \int_{0}^{T}2\hat{Y}_{s}^{\alpha ,\beta }\left(f^{\alpha }(s,Y_{s}^{\alpha },Z_{s}^{\alpha })-f^{\beta }(s,Y_{s}^{\beta},Z_{s}^{\beta })\right) ds\right]  \\
		\leq &\hat{\mathbb{E}}\left[ \int_{0}^{T}u_{s}\left\vert \hat{Y}_{s}^{\alpha ,\beta }\right\vert ^{2}+L\left\vert \hat{Y}_{s}^{\alpha,\beta }\hat{Z}_{s}^{\alpha ,\beta }\right\vert +\hat{Y}_{s}^{\alpha ,\beta}\left( 6u_{s}\left\vert Y_{s}^{\beta }\right\vert +2L\left\vert Z_{s}^{\beta }\right\vert +2h(s)\right) ds\right]  \\
		\leq &C_{L,M}\left\{ \left( 1+\frac{1}{\epsilon }\right) \Vert \hat{Y}^{\alpha ,\beta }\Vert _{S_{G}^{2}}^{2}+\epsilon \Vert \hat{Z}^{\alpha ,\beta }\Vert _{H_{G}^{2}}^{2}+\Vert \hat{Y}^{\alpha,\beta }\Vert _{S_{G}^{2}}\left( \left\Vert Y^{\beta }\right\Vert_{S_{G}^{2}}+\left\Vert Z^{\beta}\right\Vert_{H_{G}^{2}}+\left\Vert\int_{0}^{T}h(s)ds\right\Vert _{L_{G}^{2}}\right) \right\} .
	\end{align*}%
	Clearly, 
	\begin{equation*}
		\hat{\mathbb{E}}\left[ \int_{0}^{T}2\hat{Y}_{s}^{\alpha ,\beta }d\hat{K}_{s}^{\alpha ,\beta }\right] \leq C\Vert \hat{Y}^{\alpha ,\beta}\Vert _{S_{G}^{2}}\left( \left\Vert K_{T}^{\alpha }\right\Vert_{L_{G}^{2}}+\Vert K_{T}^{\beta }\Vert _{L_{G}^{2}}\right) .
	\end{equation*}%
	By BDG's inequality, we have%
	\begin{align*}
		\hat{\mathbb{E}}\left[ \int_{0}^{T}2\hat{Y}_{s}^{\alpha ,\beta }\hat{Z}_{s}^{\alpha ,\beta }dB_{s}\right]  \leq &C_{\bar{\sigma}}\hat{\mathbb{E}}
		\left[ \left( \int_{0}^{T}\left\vert \hat{Y}_{s}^{\alpha ,\beta }\hat{Z}_{s}^{\alpha ,\beta }\right\vert ^{2}ds\right) ^{\frac{1}{2}}\right]  \\
		\leq &C_{\bar{\sigma}}\left( \frac{1}{\epsilon }\Vert \hat{Y}^{\alpha,\beta }\Vert _{S_{G}^{2}}^{2}+\epsilon \Vert \hat{Z}^{\alpha,\beta }\Vert _{H_{G}^{2}}^{2}\right) .
	\end{align*}%
	Thus, 
	\begin{align*}
		&\underline{\sigma }^{2}\hat{\mathbb{E}}\left[ \int_{0}^{T}\left\vert \hat{Z}_{s}^{\alpha ,\beta }\right\vert ^{2}ds\right]  \\
		\leq &C_{L,M,\bar{\sigma}}\left\{ \Vert \hat{Y}^{\alpha ,\beta }\Vert _{S_{G}^{2}}\left( \left\Vert Y^{\beta
		}\right\Vert _{S_{G}^{2}}+\left\Vert Z^{\beta }\right\Vert
		_{H_{G}^{2}}+\left\Vert K_{T}^{\alpha }\right\Vert_{L_{G}^{2}}+\Vert K_{T}^{\beta }\Vert _{L_{G}^{2}}+\left\Vert\int_{0}^{T}h(s)ds\right\Vert _{L_{G}^{2}}\right) \right\}\\
		&+C_{L,M,\bar{\sigma}}\left\lbrace\left( 1+\frac{1}{\epsilon }\right)\Vert \hat{Y}^{\alpha ,\beta }\Vert _{S_{G}^{2}}^{2}+\epsilon\Vert \hat{Z}^{\alpha ,\beta }\Vert _{H_{G}^{2}}^{2} \right\rbrace  .
	\end{align*}%
	Put $C_{L,M,\bar{\sigma}}\epsilon =\frac{\underline{\sigma }^{2}}{2}$.
	Combined with Lemma \ref{ZandK}, we obtain%
	\begin{equation*}
		\hat{\mathbb{E}}\left[ \int_{0}^{T}\left\vert \hat{Z}_{s}^{\alpha ,\beta}\right\vert ^{2}ds\right] \leq C_{T,L,M,\bar{\sigma},\underline{\sigma }}\left\{ \Vert \hat{Y}^{\alpha ,\beta }\Vert_{S_{G}^{2}}^{2}+\Vert \hat{Y}^{\alpha ,\beta }\Vert_{S_{G}^{2}}\left( \left\Vert \int_{0}^{T}h(s)ds\right\Vert_{L_{G}^{2}}+\left\Vert Y^{\alpha }\right\Vert _{S_{G}^{2}}+\left\Vert Y^{\beta }\right\Vert _{S_{G}^{2}}\right) \right\} .
	\end{equation*}
	This completes the proof. 
\end{proof}

\begin{lemma}\label{Ycha}
	For fixed $\alpha ,\beta ,\lambda >0$, assume that $\xi \in L_{G}^{2+\lambda}$ and $f$ satisfies (H1)-(H4). Let $f^{\alpha },f^{\beta },\hat{Y}^{\alpha,\beta },\hat{Z}_{s}^{\alpha ,\beta }$ be defined as in Lemma \ref{Zcha}.
	Then%
	\begin{equation*}
		\left\vert \hat{Y}_{t}^{\alpha ,\beta }\right\vert ^{2}\leq \left( \alpha+\beta \right) C_{T,L,M,\underline{\sigma }}\hat{\mathbb{E}}_{t}\left[\int_{t}^{T}\left\vert Z_{s}^{\alpha }\right\vert ^{2}+\left\vert h(s)\right\vert ^{2}+u_{s}^{2}\left\vert Y_{s}^{\alpha}\right\vert^{2}+u_{s}^{2}\left\vert Y_{s}^{\beta }\right\vert ^{2}ds\right].
	\end{equation*}
\end{lemma}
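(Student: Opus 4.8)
The plan is to run the weighted It\^o argument of Lemma \ref{Yalpha} on the difference process, using property (iii) of Lemma \ref{falpha} to extract the factor $(\alpha+\beta)$. Set $\theta_s := u_s + \frac{1}{\underline{\sigma}^2}L^2 + 1$ as in Lemma \ref{Yalpha}, so that $\int_0^T \theta_s ds < \infty$ and $1 \le e^{2\int_0^s\theta_r dr} \le C_{T,L,M,\underline{\sigma}}$. Applying It\^o's formula to $|\hat{Y}_t^{\alpha,\beta}|^2 e^{2\int_0^t \theta_r dr}$ and using $\hat{Y}_T^{\alpha,\beta} = \xi - \xi = 0$, one obtains, exactly as in (\ref{Y e}), an identity whose right-hand side consists of the generator term $\int_t^T 2\hat{Y}_s^{\alpha,\beta}(f^\alpha(s,Y_s^\alpha,Z_s^\alpha) - f^\beta(s,Y_s^\beta,Z_s^\beta)) e^{2\int_0^s\theta_r dr} ds$, a $dB$ stochastic integral, and a $d\hat{K}^{\alpha,\beta}$ integral, set against $\int_t^T |\hat{Z}_s^{\alpha,\beta}|^2 e^{2\int_0^s\theta_r dr} d\langle B\rangle_s$ and $\int_t^T 2\theta_s |\hat{Y}_s^{\alpha,\beta}|^2 e^{2\int_0^s\theta_r dr} ds$ on the left.

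The heart of the argument is the generator difference. I would insert the intermediate term $f^\beta(s,Y_s^\beta,Z_s^\alpha)$ and write
\[ f^\alpha(s,Y_s^\alpha,Z_s^\alpha) - f^\beta(s,Y_s^\beta,Z_s^\beta) = \left( f^\alpha(s,Y_s^\alpha,Z_s^\alpha) - f^\beta(s,Y_s^\beta,Z_s^\alpha) \right) + \left( f^\beta(s,Y_s^\beta,Z_s^\alpha) - f^\beta(s,Y_s^\beta,Z_s^\beta) \right). \]
The first pair shares the common third argument $z = Z_s^\alpha$, so property (iii) of Lemma \ref{falpha} applies and bounds $\hat{Y}_s^{\alpha,\beta}$ times this pair by $(\alpha+\beta)\left( |f^\alpha(s,Y_s^\alpha,Z_s^\alpha)| + |f^\beta(s,Y_s^\beta,Z_s^\alpha)| + u_s(|Y_s^\alpha| + |Y_s^\beta|) \right)^2 + u_s|\hat{Y}_s^{\alpha,\beta}|^2$; invoking (ii) and (v) of Lemma \ref{falpha} to estimate each of $|f^\alpha(s,Y_s^\alpha,Z_s^\alpha)|$ and $|f^\beta(s,Y_s^\beta,Z_s^\alpha)|$ by $h(s) + 3u_s|\cdot| + L|Z_s^\alpha|$ collapses the square into $C_L\left( |h(s)|^2 + u_s^2|Y_s^\alpha|^2 + u_s^2|Y_s^\beta|^2 + |Z_s^\alpha|^2 \right)$, which is precisely the integrand appearing in the claim. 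The second pair is controlled by the $z$-Lipschitz property (v), giving at most $L|\hat{Y}_s^{\alpha,\beta}||\hat{Z}_s^{\alpha,\beta}|$, and Young's inequality splits $2L|\hat{Y}_s^{\alpha,\beta}||\hat{Z}_s^{\alpha,\beta}| \le \frac{L^2}{\underline{\sigma}^2}|\hat{Y}_s^{\alpha,\beta}|^2 + \underline{\sigma}^2|\hat{Z}_s^{\alpha,\beta}|^2$. By the choice of $\theta_s$ and the inequality $d\langle B\rangle_s \ge \underline{\sigma}^2 ds$, the resulting $|\hat{Y}^{\alpha,\beta}|^2$ and $|\hat{Z}^{\alpha,\beta}|^2$ contributions are absorbed into the left-hand side, leaving only the $(\alpha+\beta)$-weighted integrand together with the $dB$ and $d\hat{K}^{\alpha,\beta}$ integrals.

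The main obstacle is the term $-\int_t^T 2\hat{Y}_s^{\alpha,\beta} e^{2\int_0^s\theta_r dr} d\hat{K}_s^{\alpha,\beta}$: since $\hat{K}^{\alpha,\beta} = K^\alpha - K^\beta$ is a difference of non-increasing $G$-martingales it is not monotone, so the single-$K$ device of Lemma \ref{Yalpha} cannot be used verbatim. I would resolve this by decomposing $\hat{Y}^{\alpha,\beta} = \hat{Y}^{\alpha,\beta,+} - \hat{Y}^{\alpha,\beta,-}$ and exploiting the signs of the two measures separately: because $dK_s^\alpha \le 0$ and $dK_s^\beta \le 0$, one has $-2\hat{Y}_s^{\alpha,\beta} dK_s^\alpha \le -2\hat{Y}_s^{\alpha,\beta,+} dK_s^\alpha$ and $2\hat{Y}_s^{\alpha,\beta} dK_s^\beta \le -2\hat{Y}_s^{\alpha,\beta,-} dK_s^\beta$, whence the $d\hat{K}^{\alpha,\beta}$ integral is dominated by $-\int_t^T 2\hat{Y}_s^{\alpha,\beta,+} e^{2\int_0^s\theta_r dr} dK_s^\alpha - \int_t^T 2\hat{Y}_s^{\alpha,\beta,-} e^{2\int_0^s\theta_r dr} dK_s^\beta$. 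Each of these now has the same form as the $dK$ term in Lemma \ref{Yalpha}, so collecting them with the $dB$ integral into
\[ N_t := \int_0^t 2\hat{Y}_s^{\alpha,\beta}\hat{Z}_s^{\alpha,\beta} e^{2\int_0^s\theta_r dr} dB_s + \int_0^t 2\hat{Y}_s^{\alpha,\beta,+} e^{2\int_0^s\theta_r dr} dK_s^\alpha + \int_0^t 2\hat{Y}_s^{\alpha,\beta,-} e^{2\int_0^s\theta_r dr} dK_s^\beta \]
produces a martingale by Lemma 3.4 in \cite{hu2014backward}. Taking the conditional expectation $\hat{\mathbb{E}}_t$, using $\hat{\mathbb{E}}_t[N_T - N_t] = 0$ together with sub-additivity, dividing by $e^{2\int_0^t\theta_r dr} \ge 1$ and bounding $e^{2\int_0^s\theta_r dr} \le C_{T,L,M,\underline{\sigma}}$ then yields the asserted estimate.
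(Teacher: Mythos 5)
Your proposal is correct and follows essentially the same route as the paper: the weighted It\^o formula applied to $|\hat{Y}_t^{\alpha,\beta}e^{\int_0^t\theta_r dr}|^2$, the insertion of $f^{\beta}(s,Y_s^{\beta},Z_s^{\alpha})$ so that Lemma \ref{falpha}(iii) yields the $(\alpha+\beta)$ factor and (ii), (v) reduce the square to $C_L(|Z_s^{\alpha}|^2+|h(s)|^2+u_s^2|Y_s^{\alpha}|^2+u_s^2|Y_s^{\beta}|^2)$, and the martingale $N$ built from $\hat{Y}^{\alpha,\beta,+}dK^{\alpha}$ and $\hat{Y}^{\alpha,\beta,-}dK^{\beta}$ to dispose of the non-monotone $d\hat{K}^{\alpha,\beta}$ term. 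The only deviation is your choice $\theta_s=u_s+\frac{1}{\underline{\sigma}^2}L^2+1$ instead of the paper's $\theta_s=u_s+\frac{1}{\underline{\sigma}^2}L^2$, which is immaterial since the extra discarded term remains nonnegative and the exponential weight is still bounded by $C_{T,L,M,\underline{\sigma}}$.
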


\begin{proof}
	Set $\theta _{s}=u_{s}+\frac{1}{\underline{\sigma }^{2}}L^{2}$. Using It\^{o}'s formula to $\left\vert \hat{Y}_{t}^{\alpha ,\beta }e^{\int_{0}^{t}\theta_{r}dr}\right\vert ^{2}$, we get
	\begin{align*}
		&\left\vert \hat{Y}_{t}^{\alpha ,\beta }\right\vert
		^{2}e^{2\int_{0}^{t}\theta _{r}dr}+\int_{t}^{T}2\theta _{s}\left\vert \hat{Y}_{s}^{\alpha ,\beta }\right\vert ^{2}e^{2\int_{0}^{s}\theta_{r}dr}ds+\int_{t}^{T}\left\vert \hat{Z}_{s}^{\alpha ,\beta }\right\vert^{2}e^{2\int_{0}^{s}\theta _{r}dr}d\langle B\rangle s \\
		=&\int_{t}^{T}2\hat{Y}_{s}^{\alpha ,\beta }\left( f^{\alpha
		}(s,Y_{s}^{\alpha },Z_{s}^{\alpha })-f^{\beta }(s,Y_{s}^{\beta
		},Z_{s}^{\beta })\right) e^{2\int_{0}^{s}\theta _{r}dr}ds-\int_{t}^{T}2\hat{Y}_{s}^{\alpha ,\beta }\hat{Z}_{s}^{\alpha,\beta}e^{2\int_{0}^{s}\theta_{r}dr}dB_{s}\\
		&-\int_{t}^{T}2\hat{Y}_{s}^{\alpha ,\beta}e^{2\int_{0}^{s}\theta _{r}dr}d\left( K_{s}^{\alpha }-K_{s}^{\beta }\right) 
		\\
		\leq &\int_{t}^{T}2\hat{Y}_{s}^{\alpha ,\beta }\left( f^{\alpha
		}(s,Y_{s}^{\alpha },Z_{s}^{\alpha })-f^{\beta }(s,Y_{s}^{\beta
		},Z_{s}^{\beta })\right) e^{2\int_{0}^{s}\theta _{r}dr}ds-\left(
		N_{T}-N_{t}\right) ,
	\end{align*}%
	where $N_{t}=\int_{0}^{t}2\hat{Y}_{s}^{\alpha,\beta}\hat{Z}_{s}^{\alpha,\beta}e^{2\int_{0}^{s}\theta_{r}dr}dB_{s}+\int_{0}^{t}2\hat{Y}_{s}^{\alpha,\beta,+}e^{2\int_{0}^{s}\theta_{r}dr}dK_{s}^{\alpha}+\int_{0}^{t}2\hat{Y}_{s}^{\alpha,\beta,-}e^{2\int_{0}^{s}\theta_{r}dr}dK_{s}^{\beta }$. From Lemma \ref{falpha}, we yield
	\begin{align*}
		&2\hat{Y}_{s}^{\alpha ,\beta }\left( f^{\alpha }(s,Y_{s}^{\alpha
		},Z_{s}^{\alpha })-f^{\beta }(s,Y_{s}^{\beta },Z_{s}^{\beta })\right)  \\
		\leq&2\hat{Y}_{s}^{\alpha,\beta}\left(f^{\alpha}(s,Y_{s}^{\alpha
		},Z_{s}^{\alpha })-f^{\beta }(s,Y_{s}^{\beta },Z_{s}^{\alpha })+f^{\beta}(s,Y_{s}^{\beta },Z_{s}^{\alpha })-f^{\beta }(s,Y_{s}^{\beta },Z_{s}^{\beta})\right)  \\
		\leq &2\left( \alpha +\beta \right) \left\{ \left\vert f^{\alpha
		}(s,Y_{s}^{\alpha },Z_{s}^{\alpha })\right\vert +\left\vert f^{\beta}(s,Y_{s}^{\beta },Z_{s}^{\alpha })\right\vert +u_{s}\left( \left\vert Y_{s}^{\alpha }\right\vert +\left\vert Y_{s}^{\beta }\right\vert \right)\right\} ^{2}+2u_{s}\left\vert \hat{Y}_{s}^{\alpha ,\beta }\right\vert^{2}+2L\left\vert \hat{Y}_{s}^{\alpha,\beta}\hat{Z}_{s}^{\alpha,\beta}\right\vert  \\
		\leq &\left( \alpha +\beta \right) C\left( \left\vert f^{\alpha
		}(s,Y_{s}^{\alpha },Z_{s}^{\alpha })\right\vert ^{2}+\left\vert f^{\beta}(s,Y_{s}^{\beta },Z_{s}^{\alpha })\right\vert ^{2}+u_{s}^{2}\left\vert Y_{s}^{\alpha }\right\vert ^{2}+u_{s}^{2}\left\vert Y_{s}^{\beta}\right\vert ^{2}\right) +2u_{s}\left\vert \hat{Y}_{s}^{\alpha ,\beta }\right\vert^{2}+2L\left\vert\hat{Y}_{s}^{\alpha,\beta}\hat{Z}_{s}^{\alpha,\beta}\right\vert\\
		\leq &\left( \alpha +\beta \right) C_{L}\left( \left\vert Z_{s}^{\alpha}\right\vert ^{2}+\left\vert h(s)\right\vert ^{2}+u_{s}^{2}\left\vert Y_{s}^{\alpha }\right\vert ^{2}+u_{s}^{2}\left\vert Y_{s}^{\beta}\right\vert ^{2}\right) +\left( 2u_{s}+\frac{1}{\underline{\sigma }^{2}}L^{2}\right) \left\vert \hat{Y}_{s}^{\alpha ,\beta }\right\vert ^{2}+\underline{\sigma }^{2}\left\vert \hat{Z}_{s}^{\alpha ,\beta }\right\vert^{2}.
	\end{align*}%
	Thus, 
	\begin{align*}
		&\left\vert \hat{Y}_{t}^{\alpha ,\beta }\right\vert
		^{2}+\int_{t}^{T}\left( 2\theta _{s}-2u_{s}-\frac{1}{\underline{\sigma }^{2}}L^{2}\right)\left\vert \hat{Y}_{s}^{\alpha,\beta }\right\vert^{2}e^{2\int_{0}^{s}\theta_{r}dr}ds+\int_{t}^{T}\left\vert \hat{Z}_{s}^{\alpha ,\beta }\right\vert^{2}e^{2\int_{0}^{s}\theta _{r}dr}\left( d\langle B\rangle s-\underline{\sigma }^{2}ds\right)  \\
		\leq &\left( \alpha +\beta \right) C_{L}\int_{t}^{T}\left( \left\vert Z_{s}^{\alpha }\right\vert ^{2}+\left\vert h(s)\right\vert^{2}+u_{s}^{2}\left\vert Y_{s}^{\alpha }\right\vert ^{2}+u_{s}^{2}\left\vert Y_{s}^{\beta }\right\vert ^{2}\right) e^{2\int_{0}^{s}\theta_{r}dr}ds-\left( N_{T}-N_{t}\right) .
	\end{align*}%
	It turns to%
	\begin{equation*}
		\left\vert \hat{Y}_{t}^{\alpha ,\beta }\right\vert^{2}+N_{T}-N_{t}\leq\left( \alpha +\beta \right) C_{T,L,M,\underline{\sigma }}\int_{t}^{T}\left\vert Z_{s}^{\alpha }\right\vert ^{2}+\left\vert h(s)\right\vert^{2}+u_{s}^{2}\left\vert Y_{s}^{\alpha}\right\vert ^{2}+u_{s}^{2}\left\vert Y_{s}^{\beta }\right\vert ^{2}ds
	\end{equation*}%
	Taking the conditional expectation of both sides, we obtain%
	\begin{equation*}
		\left\vert \hat{Y}_{t}^{\alpha ,\beta }\right\vert ^{2}\leq \left( \alpha+\beta \right) C_{T,L,M,\underline{\sigma }}\hat{\mathbb{E}}_{t}\left[\int_{t}^{T}\left\vert Z_{s}^{\alpha }\right\vert ^{2}+\left\vert h(s)\right\vert ^{2}+u_{s}^{2}\left\vert Y_{s}^{\alpha }\right\vert
		^{2}+u_{s}^{2}\left\vert Y_{s}^{\beta }\right\vert ^{2}ds\right].
	\end{equation*}
	The proof is completed.
\end{proof}

Here, we present the main result of this section.

\begin{theorem}\label{thm solution}
	For some $\lambda>0$, if $\xi\in L_{G}^{2+\lambda}$ and $f$ satisfies (H1)-(H4), then $G$-BSDE (\ref{GBSDE}) exists a unique solution $(Y,Z,K)\in \mathfrak{S}_{G}^{2}(0,T)$.
\end{theorem}

\begin{proof}
	For fixed $\alpha ,\beta>0$, let $f^{\alpha },f^{\beta },\hat{Y}^{\alpha
		,\beta },\hat{Z}_{s}^{\alpha ,\beta }$ be defined as in Lemma \ref{Zcha}.
	
	Step 1. First, let us prove that $\left( Y^{\alpha },Z^{\alpha}\right)_{\alpha >0}$ is a Cauchy sequence. According to Lemma \ref{Ycha}, we yield%
	\begin{equation}
		\hat{\mathbb{E}}\left[ \sup_{t\in \lbrack 0,T]}\left\vert \hat{Y}
		_{t}^{\alpha ,\beta }\right\vert ^{2}\right] \leq \left( \alpha +\beta\right) C_{T,L,M,\underline{\sigma }}\hat{\mathbb{E}}\left[ \sup_{t\in \lbrack0,T]}\hat{\mathbb{E}}_{t}\left[ \int_{0}^{T}\left\vert Z_{s}^{\alpha}\right\vert ^{2}+\left\vert h(s)\right\vert ^{2}+u_{s}^{2}\left\vert Y_{s}^{\alpha }\right\vert ^{2}+u_{s}^{2}\left\vert Y_{s}^{\beta
		}\right\vert ^{2}ds\right] \right] .  \label{EtYcha}
	\end{equation}%
	From Theorem \ref{EsupEt}, we can obtain
	\begin{align*}
		\hat{\mathbb{E}}\left[\sup_{t\in\lbrack0,T]}\hat{\mathbb{E}}_{t}\left[\int_{0}^{T}\left\vert Z_{s}^{\alpha }\right\vert ^{2}ds\right] \right]
		\leq &C_{\lambda ^{\prime }}\left( \hat{\mathbb{E}}\left[ \left(
		\int_{0}^{T}\left\vert Z_{s}^{\alpha }\right\vert ^{2}ds\right) ^{\frac{2+\lambda ^{\prime }}{2}}\right] +1\right) ,
	\end{align*}
	where $0<\lambda ^{\prime }<\lambda $. Similarly, we can get%
	\begin{equation*}
		\hat{\mathbb{E}}\left[ \sup_{t\in \lbrack 0,T]}\hat{\mathbb{E}}_{t}\left[\int_{0}^{T}\left\vert h(s)\right\vert ^{2}ds\right] \right] \leq
		C_{\lambda }\left( \hat{\mathbb{E}}\left[ \left( \int_{0}^{T}\left\vert h(s)\right\vert ^{2}ds\right) ^{\frac{2+\lambda }{2}}\right] +1\right) ,
	\end{equation*}
	and
	\begin{align*}
		\hat{\mathbb{E}}\left[ \sup_{t\in \lbrack 0,T]}\hat{\mathbb{E}}_{t}\left[\int_{0}^{T}u_{s}^{2}\left\vert Y_{s}^{i}\right\vert ^{2}ds\right] \right]
		\leq &M\hat{\mathbb{E}}\left[ \sup_{t\in \lbrack 0,T]}\hat{\mathbb{E}}_{t}\left[ \sup_{s\in \lbrack 0,T]}\left\vert Y_{s}^{i}\right\vert ^{2}\right] \right] \\
		\leq &C_{\lambda ^{\prime },M}\left( \hat{\mathbb{E}}\left[ \sup_{t\in\lbrack 0,T]}\left\vert Y_{t}^{i}\right\vert^{2+\lambda ^{\prime }}\right]+1\right) ,\qquad i=\alpha ,\beta .
	\end{align*}%
	Then by Lemma \ref{ZandK}, (\ref{EtYcha}) turns to 
	\begin{align*}
		&\hat{\mathbb{E}}\left[ \sup_{t\in \lbrack 0,T]}\left\vert \hat{Y}_{t}^{\alpha ,\beta }\right\vert ^{2}\right] \\
		\leq &\left( \alpha +\beta \right) C_{T,\lambda ,\lambda ^{\prime },L,M,\bar{\sigma},\underline{\sigma }}\left( 1+\left\Vert Z^{\alpha }\right\Vert_{H_{G}^{2+\lambda ^{\prime }}}^{2+\lambda ^{\prime }}+\left\Vert Y^{\alpha}\right\Vert _{S_{G}^{2+\lambda ^{\prime} }}^{2+\lambda ^{\prime }}+\left\Vert
		Y^{\beta }\right\Vert _{S_{G}^{2+\lambda ^{\prime} }}^{2+\lambda ^{\prime }}+\hat{\mathbb{E}}\left[ \left( \int_{0}^{T}\left\vert h(s)\right\vert ^{2}ds\right) ^{\frac{2+\lambda }{2}}\right] \right) \\
		\leq &\left( \alpha +\beta \right) C_{T,\lambda ,\lambda ^{\prime },L,M,\bar{\sigma},\underline{\sigma }}\left( 1+\left\Vert Y^{\alpha }\right\Vert_{S_{G}^{2+\lambda^{ \prime} }}^{2+\lambda ^{\prime }}+\left\Vert Y^{\beta}\right\Vert _{S_{G}^{2+\lambda ^{\prime} }}^{2+\lambda ^{\prime }}+\hat{\mathbb{E}}\left[ \int_{0}^{T}\left\vert h(s)\right\vert ^{2+\lambda }ds\right] +\left\Vert \int_{0}^{T}h(s)ds\right\Vert _{L_{G}^{2+\lambda ^{\prime}}}^{2+\lambda ^{\prime} }\right)\\
		\leq &\left( \alpha +\beta \right) C_{T,\lambda ,\lambda ^{\prime },L,M,\bar{\sigma},\underline{\sigma }}\left( 1+\left\Vert Y^{\alpha }\right\Vert_{S_{G}^{2+\lambda^{ \prime} }}^{2+\lambda ^{\prime }}+\left\Vert Y^{\beta}\right\Vert _{S_{G}^{2+\lambda ^{\prime} }}^{2+\lambda ^{\prime }}+\hat{\mathbb{E}}\left[ \int_{0}^{T}\left\vert h(s)\right\vert ^{2+\lambda }ds\right] \right) .
	\end{align*}%
	Due to Theorem \ref{EsupEt} and Lemma \ref{Yalpha}, it follows that 
	\begin{align*}
		\hat{\mathbb{E}}\left[ \sup_{t\in \lbrack 0,T]}\left\vert Y^{i}\right\vert^{2+\lambda ^{\prime }}\right] \leq &C_{T,\lambda ^{\prime} ,L,M,\underline{\sigma }}\left( \hat{\mathbb{E}}\left[ \sup_{t\in \lbrack 0,T]}\hat{\mathbb{E
		}}_{t}\left[ \xi ^{2+\lambda ^{\prime }}\right] \right] +\hat{\mathbb{E}}\left[ \sup_{t\in \lbrack 0,T]}\hat{\mathbb{E}}_{t}\left[ \left(\int_{0}^{T}\left\vert h(s)\right\vert ^{2+\lambda ^{\prime }}ds\right) 
		\right] \right] \right) \\
		\leq &C_{T,\lambda ,\lambda ^{\prime} ,L,M,\underline{\sigma }}\left( 1+\hat{\mathbb{E}}\left[ \xi ^{2+\lambda }\right] +\hat{\mathbb{E}}\left[\int_{0}^{T}\left\vert h(s)\right\vert ^{2+\lambda }ds\right] \right)
		,\qquad i=\alpha ,\beta .
	\end{align*}%
	Therefore, 
	\begin{align*}
		\lim_{\alpha ,\beta \downarrow 0}\hat{\mathbb{E}}\left[ \sup_{t\in \lbrack0,T]}\left\vert \hat{Y}_{t}^{\alpha ,\beta }\right\vert ^{2}\right] 
		\leq&\lim_{\alpha ,\beta \downarrow 0}\left( \alpha +\beta \right) C_{T,\lambda ,\lambda ^{\prime },L,M,\bar{\sigma},\underline{\sigma }}\left( 1+\hat{\mathbb{E}}\left[ \xi ^{2+\lambda }\right] +\hat{\mathbb{E}}\left[ \int_{0}^{T}\left\vert h(s)\right\vert
		^{2+\lambda }ds\right] \right) \\
		=&0.
	\end{align*}%
	It follows that $\left( Y^{\alpha }\right) _{a>0}$ is the Cauchy sequence in $S_{G}^{2}(0,T)$. Thanks to Lemma \ref{Zcha}, we know that%
	\begin{equation*}
		\lim_{\alpha ,\beta \downarrow 0}\Vert \hat{Z}^{\alpha ,\beta}\Vert _{H_{G}^{2}}^{2}=0,
	\end{equation*}
	which implies $\left( Z^{\alpha }\right) _{a>0}$ is the Cauchy sequence in $H_{G}^{2}(0,T)$. Denote the limits of $(Y^{\alpha },Z^{\alpha })$ in $S_{G}^{2}(0,T)\times H_{G}^{2}(0,T)$ as $(Y,Z)$.
	
	Step 2. Now let us prove that 
	\begin{equation*}
		\lim_{\alpha \downarrow 0}\hat{\mathbb{E}}\left[ \left(
		\int_{0}^{T}\left\vert f(s,Y_{s},Z_{s})-f^{\alpha }(s,Y_{s}^{\alpha},Z_{s}^{\alpha })\right\vert ds\right) ^{2}\right] =0.
	\end{equation*}%
	Recall that $\left\vert F^{\alpha }(s,Y_{s},Z_{s})-F^{\alpha
	}(s,Y_{s}^{\alpha },Z_{s})\right\vert =\left\vert F(s,J^{\alpha
	}(s,Y_{s},Z_{s}),Z_{s})-F(s,J^{\alpha }(s,Y_{s}^{\alpha
	},Z_{s}),Z_{s})\right\vert $, and $F$ is continuous in $y$. Then for $\epsilon >0$, there exists $\delta >0$ such that whenever $\left\vert J^{\alpha }(s,Y_{s},Z_{s})-J^{\alpha }(s,Y_{s}^{\alpha },Z_{s})\right\vert<\delta $, it follows that $\left\vert F^{\alpha }(s,Y_{s},Z_{s})-F^{\alpha}(s,Y_{s}^{\alpha },Z_{s})\right\vert <\epsilon $. Set $D_{\alpha }=\left\{ \left\vert J^{\alpha }(s,Y_{s},Z_{s})-J^{\alpha }(s,Y_{s}^{\alpha },Z_{s})\right\vert<\delta \right\} $. From Lemma \ref{Falpha}, we get $D_{\alpha }^{c}=\left\{\left\vert J^{\alpha }(s,Y_{s},Z_{s})-J^{\alpha }(s,Y_{s}^{\alpha},Z_{s})\right\vert \geq \delta \right\} \subset E_{\alpha }=\left\{\left\vert Y_{s}-Y_{s}^{\alpha }\right\vert \geq \delta \right\} $. It leads
	to
	\begin{align*}
		\lim_{\alpha \downarrow 0}\hat{\mathbb{E}}\left[ \int_{0}^{T}I_{E_{\alpha}}ds\right]  \leq &\lim_{\alpha \downarrow 0}\hat{\mathbb{E}}\left[ \frac{1}{\delta ^{2}}\int_{0}^{T}\left\vert Y_{s}-Y_{s}^{\alpha }\right\vert ^{2}ds\right]  \\
		\leq &\frac{T}{\delta ^{2}}\lim_{\alpha \downarrow 0}\hat{\mathbb{E}}\left[\sup_{t\in \lbrack 0,T]}\left\vert Y_{s}-Y_{s}^{\alpha }\right\vert ^{2}\right]  \\
		=&0.
	\end{align*}%
	Clearly, thanks to Lemma \ref{f-falpha}, $F^{\alpha}(\cdot,Y^{\alpha},Z)\in M_{G}^{2}(0,T)$. Then by Lemma 4.2 in \cite{hu2016stochastic}, we know that 
	\begin{equation*}
		\lim_{\alpha \downarrow 0}\hat{\mathbb{E}}\left[ \int_{0}^{T}\left\vert F^{\alpha }(s,Y_{s},Z_{s})-F^{\alpha }(s,Y_{s}^{\alpha },Z_{s})\right\vert^{2}I_{E_{\alpha }}ds\right] < \epsilon ,
	\end{equation*}%
	where $\epsilon $ is the same as above. Thus,
	\begin{align}
		&\lim_{\alpha \downarrow 0}\hat{\mathbb{E}}\left[ \int_{0}^{T}\left\vert f^{\alpha }(s,Y_{s},Z_{s})-f^{\alpha }(s,Y_{s}^{\alpha },Z_{s})\right\vert
		^{2}ds\right]   \notag \\
		\leq &\lim_{\alpha \downarrow 0}\left( 2\hat{\mathbb{E}}\left[
		\int_{0}^{T}\left\vert F^{\alpha }(s,Y_{s},Z_{s})-F^{\alpha}(s,Y_{s}^{\alpha },Z_{s})\right\vert ^{2}ds\right]+2\hat{\mathbb{E}}\left[\int_{0}^{T}u_{s}^{2}\left\vert Y_{s}-Y_{s}^{\alpha }\right\vert ^{2}ds\right] \right)   \notag \\
		\leq &\lim_{\alpha \downarrow 0}\left( 2\hat{\mathbb{E}}\left[
		\int_{0}^{T}\left\vert F^{\alpha }(s,Y_{s},Z_{s})-F^{\alpha
		}(s,Y_{s}^{\alpha },Z_{s})\right\vert ^{2}\left( I_{D_{\alpha
		}}+I_{D_{\alpha }^{c}}\right) ds\right]+2M\hat{\mathbb{E}}\left[ \sup_{t\in\lbrack 0,T]}\left\vert Y_{t}-Y_{t}^{\alpha }\right\vert ^{2}\right] \right) \notag \\
		\leq &2T\epsilon ^{2}+2\lim_{\alpha \downarrow 0}\hat{\mathbb{E}}\left[\int_{0}^{T}\left\vert F^{\alpha }(s,Y_{s},Z_{s})-F^{\alpha}(s,Y_{s}^{\alpha },Z_{s})\right\vert ^{2}I_{E_{\alpha }}ds\right]   \notag\\
		\leq &2T\epsilon ^{2}+2\epsilon .  \label{falphay-falphayalpha}
	\end{align}%
	Finally, by combining Lemma \ref{f-falpha} and (\ref{falphay-falphayalpha}), we can obtain
	\begin{align*}
		&\lim_{\alpha \downarrow 0}\hat{\mathbb{E}}\left[ \int_{0}^{T}\left\vert f(s,Y_{s},Z_{s})-f^{\alpha }(s,Y_{s}^{\alpha },Z_{s}^{\alpha })\right\vert^{2}ds\right]  \\
		\leq &\lim_{\alpha \downarrow 0}\left( 2\hat{\mathbb{E}}\left[
		\int_{0}^{T}\left\vert f(s,Y_{s},Z_{s})-f^{\alpha}(s,Y_{s}^{\alpha},Z_{s})\right\vert ^{2}ds\right] +2\hat{\mathbb{E}}\left[\int_{0}^{T}\left\vert f^{\alpha }(s,Y_{s}^{\alpha },Z_{s})-f^{\alpha}(s,Y_{s}^{\alpha },Z_{s}^{\alpha })^{2}\right\vert ds\right] \right)  \\
		\leq &\lim_{\alpha \downarrow 0}\left( 2\hat{\mathbb{E}}\left[
		\int_{0}^{T}\left\vert f(s,Y_{s},Z_{s})-f^{\alpha }(s,Y_{s}^{\alpha},Z_{s})\right\vert ^{2}ds\right] +2\hat{\mathbb{E}}\left[\int_{0}^{T}L^{2}\left\vert Z_{s}-Z_{s}^{\alpha }\right\vert ^{2}ds\right]\right)  \\
		\leq &\lim_{\alpha \downarrow 0}\left( 2\hat{\mathbb{E}}\left[
		\int_{0}^{T}\left\vert f(s,Y_{s},Z_{s})-f^{\alpha }(s,Y_{s}^{\alpha},Z_{s})\right\vert ^{2}ds\right] +2L^{2}\hat{\mathbb{E}}\left[\int_{0}^{T}\left\vert Z_{s}-Z_{s}^{\alpha }\right\vert ^{2}ds\right]\right)  \\
		\leq &\lim_{\alpha \downarrow 0}\left( 4\hat{\mathbb{E}}\left[
		\int_{0}^{T}\left\vert f(s,Y_{s},Z_{s})-f^{\alpha}(s,Y_{s},Z_{s})\right\vert ^{2}ds\right] +4\hat{\mathbb{E}}\left[\int_{0}^{T}\left\vert f^{\alpha }(s,Y_{s},Z_{s})-f^{\alpha}(s,Y_{s}^{\alpha },Z_{s})\right\vert ^{2}ds\right] \right)  \\
		\leq &8T\epsilon ^{2}+8\epsilon .
	\end{align*}%
	By letting $\epsilon \rightarrow 0$, we conclude the proof. 

	Step 3. Set%
	\begin{equation*}
		K_{t}=Y_{t}-Y_{0}+\int_{0}^{t}f(s,Y_{s},Z_{s})ds-\int_{0}^{t}Z_{s}dB_{s}.
	\end{equation*}%
	It is easy to show that 
	\begin{equation*}
		\lim_{\alpha \downarrow 0}\hat{\mathbb{E}}\left[ \left\vert
		K_{t}-K_{t}^{\alpha }\right\vert ^{2}\right] =0.
	\end{equation*}%
	Therefore, $\left( Y,Z,K\right) $ is the solution to (\ref{GBSDE}).
	
	Step 4. Finally, let us prove the uniqueness of the solution. Consider following equations%
	\begin{align*}
		Y_{t} =&\xi
		+\int_{t}^{T}f(s,Y_{s},Z_{s})ds-\int_{t}^{T}Z_{s}dB_{s}-(K_{T}-K_{t}), \\
		Y_{t}^{\prime } =&\xi +\int_{t}^{T}f(s,Y_{s}^{\prime },Z_{s}^{\prime
		})ds-\int_{t}^{T}Z_{s}^{\prime }dB_{s}-(K_{T}^{\prime }-K_{t}^{\prime }).
	\end{align*}%
	Define $\hat{Y}=Y-Y^{\prime },\hat{Z}=Z-Z^{\prime }$. Applying It\^{o}'s formula, we get 
	\begin{align*}
		&\left\vert \hat{Y}_{t}\right\vert ^{2}+\int_{t}^{T}\left\vert \hat{Z}_{s}\right\vert ^{2}d\langle B\rangle s \\
		=&\int_{t}^{T}2\hat{Y}_{s}\left(f(s,Y_{s},Z_{s})-f(s,Y_{s}^{\prime},Z_{s}^{\prime })\right) ds-\int_{t}^{T}2\hat{Y}_{s}\hat{Z}%
		_{s}dB_{s}-\int_{t}^{T}2\hat{Y}_{s}d\left( K_{s}-K_{s}^{\prime }\right)  \\
		\leq&\int_{t}^{T}2\hat{Y}_{s}\left(f(s,Y_{s},Z_{s})-f(s,Y_{s}^{\prime},Z_{s}^{\prime })\right) ds-\left( N_{T}-N_{t}\right) ,
	\end{align*}%
	where $N_{t}=\int_{0}^{t}2\hat{Y}_{s}\hat{Z}_{s}dB_{s}+\int_{0}^{t}2\hat{Y}
	_{s}^{+}dK_{s}+\int_{0}^{t}2\hat{Y}_{s}^{-}dK_{s}^{\prime }$. Since%
	\begin{align*}
		&2\hat{Y}_{s}\left( f(s,Y_{s},Z_{s})-f(s,Y_{s}^{\prime },Z_{s}^{\prime})\right)  \\
		\leq &2\hat{Y}_{s}\left( f(s,Y_{s},Z_{s})-f(s,Y_{s}^{\prime
		},Z_{s})+f(s,Y_{s}^{\prime},Z_{s})-f(s,Y_{s}^{\prime},Z_{s}^{\prime})\right)  \\
		\leq &2u_{s}\left\vert \hat{Y}_{s}\right\vert ^{2}+2L\left\vert \hat{Y}_{s}\hat{Z}_{s}\right\vert  \\
		\leq &\left( 2u_{s}+\frac{1}{\underline{\sigma}^{2}}L^{2}\right)
		\left\vert \hat{Y}_{s}\right\vert ^{2}+\underline{\sigma}^{2}\left\vert \hat{Z}_{s}\right\vert^{2},
	\end{align*}%
	it follows that%
	\begin{align*}
		&\left\vert \hat{Y}_{t}\right\vert ^{2}+\int_{t}^{T}\left\vert \hat{Z}_{s}\right\vert ^{2}\left( d\langle B\rangle s-\underline{\sigma }^{2}ds\right)  \\
		\leq &\int_{t}^{T}\left( 2u_{s}+\frac{1}{\underline{\sigma }^{2}}L^{2}\right) \left\vert \hat{Y}_{s}\right\vert ^{2}ds-\left(N_{T}-N_{t}\right) .
	\end{align*}%
	Then we yield%
	\begin{equation*}
		\left\vert \hat{Y}_{t}\right\vert ^{2}\leq \hat{\mathbb{E}}_{t}\left[\int_{0}^{T}\left( 2u_{s}+\frac{1}{\underline{\sigma }^{2}}L^{2}\right)
		\left\vert \hat{Y}_{s}\right\vert ^{2}ds\right] .
	\end{equation*}%
	Taking expectations from both sides, we can obtain%
	\begin{align*}
		\hat{\mathbb{E}}\left[ \left\vert \hat{Y}_{t}\right\vert ^{2}\right]  \leq &\hat{\mathbb{E}}\left[ \int_{0}^{T}\left( 2u_{s}+\frac{1}{\underline{\sigma }^{2}}L^{2}\right) \left\vert \hat{Y}_{s}\right\vert ^{2}ds\right]  \\
		\leq &\int_{0}^{T}\left( 2u_{s}+\frac{1}{\underline{\sigma }^{2}}L^{2}\right) \hat{\mathbb{E}}\left[ \left\vert \hat{Y}_{s}\right\vert ^{2}\right] ds.
	\end{align*}%
	From Gronwall's inequality, we know $\hat{\mathbb{E}}\left[ \left\vert \hat{Y}_{t}\right\vert ^{2}\right] =0$, which implies $Y$ is unique in $M_{G}^{2}(0,T)$. Recall that $S_{G}^{2}(0,T)\subset M_{G}^{2}(0,T)$, it follows that $Y$ is unique in $S_{G}^{2}(0,T)$. Then, through a proof method similar to Lemma \ref{Zcha}, we can obtain 
	\begin{equation*}
		\Vert \hat{Z}\Vert _{H_{G}^{2}}^{2}\leq C_{L,M,\bar{\sigma},\underline{\sigma }}\left\{ \Vert \hat{Y}\Vert_{S_{G}^{2}}^{2}+\Vert \hat{Y}\Vert _{S_{G}^{2}}\left( \left\Vert\int_{0}^{T}h(s)ds\right\Vert _{L_{G}^{2}}+\left\Vert Y\right\Vert_{S_{G}^{2}}+\left\Vert Y^{\prime }\right\Vert _{S_{G}^{2}}\right)\right\}.
	\end{equation*}%
	Thus, $\| \hat{Z}\|_{H_{G}^{2}}^{2}=0$. Finally, it can easily be prove that $\hat{\mathbb{E}}\left[ \left\vert K_{t}-K_{t}^{\prime}\right\vert ^{2}\right] =0$.
\end{proof}

Similarly, we can draw the following conclusion.

\begin{theorem}\label{thm2 solution}
	For some $\lambda>0$, if $\xi\in L_{G}^{2+\lambda}$ and $f,g$ satisfies (H1)-(H4), then G-BSDE (\ref{G-BSDE}) exists a unique solution $(Y,Z,K)\in \mathfrak{S}_{G}^{2}(0,T)$.
\end{theorem}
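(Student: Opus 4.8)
The plan is to replicate the four-step scheme of Theorem \ref{thm solution}, applying the Yosida approximation of Lemma \ref{falpha} simultaneously to both generators. First I would construct $f^{\alpha}$ and $g^{\alpha}$ from $f$ and $g$ via Lemma \ref{falpha}; each inherits the time-varying Lipschitz property in $y$, the Lipschitz property in $z$, the growth bound (ii), and the convergence property (iv). Taking a common dominating process $h\in M_{G}^{2+\lambda}(0,T)$ for both generators, the approximate equation
\begin{equation*}
	Y_{t}^{\alpha}=\xi+\int_{t}^{T}f^{\alpha}(s,Y_{s}^{\alpha},Z_{s}^{\alpha})ds+\int_{t}^{T}g^{\alpha}(s,Y_{s}^{\alpha},Z_{s}^{\alpha})d\langle B\rangle_{s}-\int_{t}^{T}Z_{s}^{\alpha}dB_{s}-(K_{T}^{\alpha}-K_{t}^{\alpha})
\end{equation*}
has a unique solution in $\mathfrak{S}_{G}^{2}(0,T)$ by Theorem 3.3 in \cite{hu2020BSDEtimevary}, since $f^{\alpha}$ and $g^{\alpha}$ are both Lipschitz.

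Next I would re-derive the four a priori estimates (the analogues of Lemmas \ref{ZandK}, \ref{Yalpha}, \ref{Zcha}, \ref{Ycha}) in the presence of the $g^{\alpha}$-term. The essential observation is that $\underline{\sigma}^{2}ds\leq d\langle B\rangle_{s}\leq\bar{\sigma}^{2}ds$ quasi-surely, so every integral against $d\langle B\rangle_{s}$ is dominated above by $\bar{\sigma}^{2}$ times the corresponding $ds$-integral. Applying It\^{o}'s formula to $|Y_{t}^{\alpha}|^{2}$ (and its exponentially weighted version), the new cross term $2Y_{s}^{\alpha}g^{\alpha}(s,Y_{s}^{\alpha},Z_{s}^{\alpha})\,d\langle B\rangle_{s}$ is treated exactly as the $f^{\alpha}$-term: monotonicity in $y$ yields $u_{s}|Y_{s}^{\alpha}|^{2}$, the $z$-Lipschitz property yields $L|Y_{s}^{\alpha}Z_{s}^{\alpha}|$, and the growth bound gives $|Y_{s}^{\alpha}|h(s)$, each now carrying a factor $\bar{\sigma}^{2}$. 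Hence the constants merely acquire additional dependence on $\bar{\sigma}$, and the $|Z_{s}^{\alpha}|^{2}$ term is still recovered on the left using the lower bound of $d\langle B\rangle_{s}$.

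With these uniform-in-$\alpha$ bounds, the limiting argument mirrors Steps 1--3 of Theorem \ref{thm solution}: the analogue of Lemma \ref{Ycha} makes $(Y^{\alpha})_{\alpha>0}$ Cauchy in $S_{G}^{2}(0,T)$, the analogue of Lemma \ref{Zcha} makes $(Z^{\alpha})_{\alpha>0}$ Cauchy in $H_{G}^{2}(0,T)$, and Lemma \ref{f-falpha}, applied separately to $f$ and $g$, gives $M_{G}^{2}$-convergence of $f^{\alpha}(s,Y_{s}^{\alpha},Z_{s}^{\alpha})$ and $g^{\alpha}(s,Y_{s}^{\alpha},Z_{s}^{\alpha})$ to $f(s,Y_{s},Z_{s})$ and $g(s,Y_{s},Z_{s})$. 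Defining $K_{t}=Y_{t}-Y_{0}+\int_{0}^{t}f(s,Y_{s},Z_{s})ds+\int_{0}^{t}g(s,Y_{s},Z_{s})d\langle B\rangle_{s}-\int_{0}^{t}Z_{s}dB_{s}$ and passing to the limit (the $d\langle B\rangle_{s}$-integral again controlled by $\bar{\sigma}^{2}$) yields a solution $(Y,Z,K)\in\mathfrak{S}_{G}^{2}(0,T)$. Uniqueness follows from the same It\^{o}--Gronwall argument as in Step 4.

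I expect the main obstacle to be the bookkeeping in the $Y$-estimate (the analogue of Lemma \ref{Yalpha}) and in the uniqueness argument, where the $z$-Lipschitz contributions of \emph{both} $f$ and $g$ must be absorbed into the single left-hand term $\int_{0}^{T}|Z_{s}^{\alpha}|^{2}d\langle B\rangle_{s}\geq\underline{\sigma}^{2}\int_{0}^{T}|Z_{s}^{\alpha}|^{2}ds$. Concretely, one applies Young's inequality as $2L|Y_{s}^{\alpha}Z_{s}^{\alpha}|\leq\frac{L^{2}}{\delta_{1}}|Y_{s}^{\alpha}|^{2}+\delta_{1}|Z_{s}^{\alpha}|^{2}$ for the $f$-term and, after bounding $d\langle B\rangle_{s}\leq\bar{\sigma}^{2}ds$, as $2L\bar{\sigma}^{2}|Y_{s}^{\alpha}Z_{s}^{\alpha}|\leq\frac{\bar{\sigma}^{2}L^{2}}{\delta_{2}}|Y_{s}^{\alpha}|^{2}+\bar{\sigma}^{2}\delta_{2}|Z_{s}^{\alpha}|^{2}$ for the $g$-term, and chooses $\delta_{1}+\bar{\sigma}^{2}\delta_{2}<\underline{\sigma}^{2}$ so that the combined $|Z_{s}^{\alpha}|^{2}$ coefficient is absorbed on the left. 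The exponential weight $\theta_{s}$ is then enlarged to a function of the form $C_{\bar{\sigma},\underline{\sigma}}u_{s}+C_{\bar{\sigma},\underline{\sigma},L}$, still integrable on $[0,T]$ by (H3); arranging these $\bar{\sigma}/\underline{\sigma}$ constants so that the estimate closes is the only genuinely delicate point.
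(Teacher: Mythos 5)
Your proposal is correct and follows exactly the route the paper intends: the paper's own ``proof'' of Theorem \ref{thm2 solution} is a one-line remark that the argument is similar to Theorem \ref{thm solution} and is omitted, and what you have written is precisely the standard elaboration of that remark (Yosida-approximate both generators, dominate the $d\langle B\rangle_{s}$-integrals by $\bar{\sigma}^{2}ds$ from above and $\underline{\sigma}^{2}ds$ from below, and choose the Young's-inequality parameters so that $\delta_{1}+\bar{\sigma}^{2}\delta_{2}<\underline{\sigma}^{2}$ absorbs the combined $|Z|^{2}$ contributions). In fact your write-up supplies more detail on the one genuinely delicate point than the paper does.
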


\begin{proof}
	The proof is similar to Theorem \ref{thm solution}, so we omit it.
\end{proof}

\section*{Declarations}

\subsection*{Funding}
Not applicable.
\subsection*{Ethical approval}
Not applicable.
\subsection*{Informed consent}
Not applicable.
\subsection*{Author Contributions}
All authors contributed equally to each part of this work. All authors read and approved the final manuscript.
\subsection*{Data Availability Statement}
Not applicable.
\subsection*{Conflict of Interest}
The authors declare that they have no known competing financial interests or personal relationships that could have appeared to influence the work reported in this paper.
\subsection*{Clinical Trial Number}
Not applicable.

\bigskip

\end{document}